\newcommand{\R}{\ensuremath{\mathbb{R}}}
\newcommand{\C}{\ensuremath{\mathbb{C}}}
\newcommand{\N}{\ensuremath{\mathbb{N}}}
\newcommand{\diff}{\ensuremath{\mathop{}\!\mathrm{d}}}
\DeclareMathOperator{\dist}{dist}
\DeclareMathOperator{\Id}{Id}
\DeclareMathOperator{\new}{new}
\DeclareMathOperator{\old}{old}
\DeclareMathOperator{\diam}{diam}
\DeclareMathOperator{\gen}{gen}
\newcommand{\Deg}[2]{\ensuremath{d_{#2}^{[#1]}}}
\newcommand{\init}{\ensuremath{\mathrm{init}}}
\newcommand{\term}{\ensuremath{\mathrm{term}}}
\newcommand{\confclass}{\ensuremath{\mathfrak c}}
\newcommand{\rmetric}{\ensuremath{\mathfrak g}}
\newcommand{\Rmetric}{\ensuremath{\mathfrak h}}
\newtheorem{theorem}{Theorem}[section]
\newtheorem{corollary}[theorem]{Corollary}
\newtheorem{definition}[theorem]{Definition}
\newtheorem{lemma}[theorem]{Lemma}
\newtheorem{proposition}[theorem]{Proposition}
\theoremstyle{definition}
\newtheorem{example}[theorem]{Example}
\newtheorem{remark}[theorem]{Remark}
\newlist{remarklist}{enumerate}{1}
\setlist[remarklist]{
	leftmargin=0cm,
	itemindent=0.7cm,
	label={(\arabic*)},
	itemsep=0.1cm,
}
\numberwithin{equation}{section}
\title[Eigenvalue bounds for the Laplacian on Embedded Metric Graphs]{Upper Eigenvalue Bounds for the Kirchhoff Laplacian on Embedded Metric Graphs}
\author[M.~Pl\"umer]{Marvin Pl\"umer}
\address{Marvin Pl\"umer, Lehrgebiet Analysis, Fakult\"at Mathematik und Informatik, Fern\-Universit\"at in Hagen, 58084 Hagen, Germany}
\email{marvin.pluemer@fernuni-hagen.de}
\thanks{The author was supported by the Deutsche Forschungsgemeinschaft (Grant 397230547). The author also thanks Delio Mugnolo (Hagen) for valueable suggestions and discussions and Pavel Kurasov (Stockholm) for useful ideas in the proof of Corollary~\ref{cor:CompareSpectralGaps}.}
\subjclass[2010]{34B45, 05C50, 05C10, 47B39, 81Q35, 35P15}
\keywords{Quantum graphs, Spectral geometry, Laplacian, Weighted graphs, Planar graphs, Graph embeddings}
\begin{document}

\begin{abstract}
We derive upper bounds for the eigenvalues of the Kirchhoff Laplacian on a compact metric graph depending on the graph's genus $g$. These bounds can be further improved if $g=0$, i.e. if the metric graph is planar. Our results are based on a spectral correspondence between the Kirchhoff Laplacian and a particular a certain combinatorial weighted Laplacian. In order to take advantage of this correspondence, we also prove new estimates for the eigenvalues of the weighted combinatorial Laplacians that were previously known only in the weighted case.
\end{abstract}

\maketitle
\section{Introduction}

The spectrum of quantum graphs -- self-adjoint differential operators, typically Schr\"odinger operators, defined upon metric graphs --  has been studied very actively in recent years. In this article, we are in particular interested in the spectral properties of the \textit{Kirchhoff Laplacian} \(\Delta\).  If the metric graph is equilateral, i.e.\ all edges have the same length \(1\) the spectral properties of \(\Delta\) are well-known: in \cite{vonBelow:CharEqAs,Cattaneo:SpectContLapl} it was proved that the spectral problem of the Kirchhoff Laplacian on an equilateral metric graph can be reformulated explicitly as a spectral problem of the associated, so-called \textit{normalized Laplacian} \(\mathcal L_{\mathrm{norm}}\). A detailed discussion of the spectral properties of the normalized Laplacian can be found in \cite{Chung:Spec}.  For instance, von Below proved that, if the metric graph is additionally connected and compact, the spectral gaps -- the lowest positive eigenvalues -- of \(\Delta\) and \(\mathcal L_{\mathrm{norm}}\) are related via
	\begin{align}\label{eq:VonBelow}
		\lambda_2(\Delta)=\arccos(1-\lambda_2(\mathcal{L}_{\mathrm{norm}}))^2
	\end{align}
if \(\lambda_2(\mathcal{L}_{\mathrm{norm}})\in [0,2)\). Unfortunately, the techniques leading to \eqref{eq:VonBelow} cannot be extended to arbitrary graphs and no analogous expression for non-equilateral graphs is known. This is why one is particularly interested in estimates for the eigenvalues of \(\Delta\) on general compact metric graphs that depend on combinatorial or metric quantities of the graph. Several bounds on the eigenvalues of \(\Delta\) have been shown for general graphs since the Faber--Krahn-type bounds proved in~\cite{Nicaise:SpecDesRe} for the spectral gap and \cite{Friedlande:ExtrPreop} for arbitrary eigenvalues: in \cite{KennedyKurasovMalenovaMugnolo:OnSpecGap} it was discussed which combinations of specific metric and combinatorial quantities allow for lower and upper bounds on the spectral gap of \(\Delta\); while many of these bounds are sharp, it is known that improved bounds hold for special classes of graphs, like \textit{trees}~\cite{Rohleder:EigEst}, \textit{highly connected graphs}~\cite{band2017quantum,BerkolaikoKennedyKurasovMugnolo:EdgeConn}, or \textit{pumpkin chains}~\cite{borthwick2019sharp}.

In this article, we discuss the spectral properties induced by the lowest \textit{genus} $g$ of an oriented and closed surface the metric graph can be embedded in: if for instance $g=0$, then the metric graph is \textit{planar}, i.e. it can be drawn on the surface of a ball, or equivalently the Euclidean plane without self-crossings. 
More precisely, we are going to derive upper bounds for the eigenvalues of the Kirchhoff Laplacian on metric graphs of arbitrary genus.\\

While planar graphs are fundamental objects of topological graph theory (see for instance \cite{MoharThomassen:GraphsOnSurfaces}), to the best of our knowledge the influence of planarity -- or, more generally, embedding features -- on the spectrum of quantum graphs has never been studied so far. This is in sharp contrast to the theory developed by Spielman and Teng in \cite{SpielmanTeng:SpecPartWork96} and following work \cite{Kelner:SpecPart, BiswaLeeRao:EigBou, KelnerLeePriceTeng:MetrUni} -- we refer especially to their bound
	\begin{equation}\label{eq:intro-spielman-teng}
		\lambda_2(\mathcal{L})\leq 8\frac{d_{\max}}{|V_G|}.
	\end{equation}
on the spectral gap for the unweighted Laplacian \(\mathcal L\) on \textit{combinatorial} planar graphs \cite[Theorem~3.3]{SpielmanTeng:SpecPartWork07} where \(d_{\max}\) is the maximum degree of the vertices of \(G\).  Their work had ground-breaking impact on numerical computing and machine learning and was followed by a vast amount of research~\cite{Kelner:SpecPart, BiswaLeeRao:EigBou, KelnerLeePriceTeng:MetrUni, AminiCohenSteiner:TransferPrinc} extending their results to higher order eigenvalues and graphs of higher genus. As it is going to be important for later discussion, we specifically single out the spectral bound obtained by Amini and Cohen-Steiner in \cite{AminiCohenSteiner:TransferPrinc}: they proved existence of a generic constant \(C>0\) so that
	\begin{align}\label{eq:intro-acs}
		\lambda_k(\mathcal{L}_{\mathrm{norm}})\leq C\frac{d_{\max}(g+k)}{|V_G|},\quad k=1,\ldots,|V_G|.
\end{align}
holds for the $k$-th ordered eigenvalue of the normalized Laplacian \(\mathcal{L}_{\mathrm{norm}}\) on a combinatorial graph of genus $g$ and a generic constant \(C>0\) and -- as we will see in Remark \ref{rem:on-est-genus} -- their estimate can be seen as a normalized higher genus and higher order analogue of the estimate \eqref{eq:intro-spielman-teng}.

Before we are going to state our main results, let us also remark that eigenvalue bounds in dependence of the genus are also well-established in the spectral theory of Laplacians on manifolds: following previous works \cite{Hersch:Quatre, YangYau:EigLapl, Korevaar:UpBound}, it was shown by Hassannezhad \cite{Hassannezhad:Conf} that the \(k\)-th eigenvalue of the Laplace--Beltrami operator \(\Delta_M=-\mathrm{div}_M\circ\nabla_M\) on a Riemannian surface \((M,\mathfrak g)\) of genus \(g\) admits the upper bound
\begin{align}	
	\lambda_k(\Delta_M)\leq C\frac{(g+k)}{\mathrm{Vol}_\mathfrak{g}(M)},\label{eq:Hassannezhad}
\end{align}
where \(C>0\) is a generic constant and \(\mathrm{Vol}_\mathfrak{g}(M)\) denotes the measure of \(M\) with respect to the Riemannian metric \(\mathfrak g\).
\subsection{Statement of the main results and structure of the article}
Let \(\mathcal G=(G,\ell)\) be a compact and connected metric graph with underlying combinatorial graph \(G=(V_G,E_G)\) and a weight function \(\ell\) that assigns to each edge \(e\in E_G\) its length \(\ell_e\). For a vertex \(v\in V_G\) let \(E_v\) denote the set of edges initiating and terminating in \(v\). The Kirchhoff Laplacian \(\Delta_\mathcal G\) on \(\mathcal G\) is the operator acting edgewise as the negative second derivative on the space of functions that are continuous and satisfy Kirchhoff conditions in every vertex \(v\in V_G\); see Section \ref{subsec:prel-metric-graphs} for details. We say that \(\mathcal G\) is of (topological) genus \(g\geq 0\) if so is its underlying combinatorial graph \(G\). If \(\mathcal G\) is of genus \(g\), we shall show that the first \(|V_G|\) eigenvalues \(\lambda_k(\Delta_\mathcal G)\) of \(\Delta_\mathcal G\) satisfy
\begin{equation}\label{eq:intro-estimate-genus1}
		\lambda_k(\Delta_\mathcal G)\leq C\frac{\Deg{\ell}{\max}(g+k)}{\ell_{\min}^2L},\quad k=1,\ldots, |V_G|,
\end{equation}
where \(C>0\) is some generic constant, \(\ell_{\min}=\min_{e\in E_G}\ell_e\) is the \emph{minimal edge length} of \(\mathcal G\), \(L=\sum_{e\in E_G}\ell_{e}\) is the \emph{total length} of \(\mathcal G\) and \(d_{\max}^\ell=\max_{v\in V_G}\sum_{e\in E_v} \ell_e\) is the \emph{maximal degree} of the vertices of \(\mathcal G\) with respect to the edge lengths. We also derive an upper bound for the eigenvalues of \(\Delta_\mathcal G\) of higher order: if \(\mathcal G\) is of genus \(g\) we shall show that
	\begin{equation}\label{eq:intro-estimate-genus2}
		\lambda_k(\Delta_\mathcal G)\leq C\frac{d_{\max}(\beta+k-1)(g+k)}{L^2},\quad k\geq \frac{L}{\ell_{\min}}-\beta+1 ,
	\end{equation}
where additionally to the previously mentioned combinatorial and metric quantities \(d_{\max}\) is the maximal combinatorial degree and \(\beta\) is the \emph{first Betti number} of \(\mathcal G\), i.e., the number of independent cycles in \(\mathcal G\). The estimates \eqref{eq:intro-estimate-genus1} and \eqref{eq:intro-estimate-genus2} can be thought as quantum graph versions of the estimate \eqref{eq:Hassannezhad} and, in fact, \eqref{eq:Hassannezhad} will be an important ingredient for the proof of the estimates. They are going to be proved in Theorem \ref{thm:maintheorem-genus} and Theorem \ref{theo:est-with-dmax} respectively.

We are particularly interested in the choice $g=0$, i.e.\ \(\mathcal G\) is planar. Section~\ref{sec:eigenvboundplanar} will be mainly devoted to proving, in the planar case,  the stronger bound
	\begin{equation}\label{eq:intro-estimate-planar}
		\lambda_2(\Delta_\mathcal G)\leq C\frac{\Deg{\mu}{\max}}{L}
	\end{equation}
for the spectral gap  of \(\Delta_\mathcal G\). Here \(\Deg{\mu}{\max}\) denotes the maximum degree of the vertices of \(\mathcal G\) with respect to the \emph{inverse weight} \(\mu=\frac{1}{\ell}\), that is \(\Deg{\mu}{\max}:=\max_{v\in V}\sum_{e\in E_v}\frac{1}{\ell_e}\). In the planar case, we are able to deliver an explicit estimate on $C$, see Theorem~\ref{thm:maintheorem}. While our estimate on $C$ is itself certainly not optimal, we  show in Remark~\ref{rem:star} that the asymptotic bound
 \[
 \lambda_2(\Delta_\mathcal G)=O\left(\frac{\Deg{\mu}{\max}}{L}\right)\quad\hbox{as}\;\; \frac{\Deg{\mu}{\max}}{L}\to \infty
 \]
is sharp. Let us also point out that the maximum degree \(\Deg{\mu}{\max}\), which to our knowledge has not yet been considered as a parameter in spectral estimates for metric graphs, tends towards \(\infty\) when the length of one edge shrinks to \(0\). This suggests that our spectral estimate \eqref{eq:intro-estimate-planar} is rather rough when the graph's edge lengths vary strongly. However, in Section \ref{subsec:EigMetr} we will discuss a number of examples  to show that, in fact, \eqref{eq:intro-estimate-planar} qualitatively improves known spectral estimates for some classes of metric graphs if the edge lengths are bounded from below.\\

In order to prove estimates \eqref{eq:intro-estimate-genus1}, \eqref{eq:intro-estimate-genus2} and \eqref{eq:intro-estimate-planar}, we make use of a spectral correspondence between the Kirchhoff Laplacian and a particular weighted combinatorial Laplacian to reduce the spectral problem to the combinatorial case. This correspondence has been studied very extensively by Exner, Kostenko, Malamud and Neidhardt \cite{ExnerKostenkoMalamudNeidhard:SpecTheo} in the case of infinite graphs. Although, for non-equilateral graphs, there this no explicit formula of the form \eqref{eq:VonBelow} relating the eigenvalues of \(\Delta_\mathcal G\) and this combinatorial Laplacian, there are estimates comparing their eigenvalues. Such an estimate was recently discovered by Kostenko and Nicolussi in \cite{KostenkoNicolussi:SpecEst} for the bottom of the spectrum of the Kirchhoff Laplacian on infinite graphs and, in fact, we will improve and extend their result in Section~\ref{subsec:spect-corr}. This will allow us to reduce the spectral estimates \eqref{eq:intro-estimate-genus1}, \eqref{eq:intro-estimate-genus2} and \eqref{eq:intro-estimate-planar} to spectral estimates for weighted combinatorial Laplacians.\\

	The combinatorial setting we are going to consider is the following:~given a finite graph \(G=(V,E)\), the weighted combinatorial Laplacian \(\mathcal L_{m,\mu}\) associated with a vertex weight \(m\) and an edge weight \(\mu\) on \(G\) is the operator given by
		\[(\mathcal L_{m,\mu} f)(v)=\frac{1}{m(v)}\sum_{e=\{u,v\}\in E_v}\mu(e)(f(v)-f(u)),\quad v\in V,\]
	defined on the space of functions \(f:V\rightarrow \C\) -- see Section \ref{subsec:prel-weight-comb-graphs} for a short introduction to the weighted combinatorial Laplacian. We also refer to \cite{BauKelWoj15} for Cheeger-type estimates, \cite{LenSchSto18} for estimates in dependence of the graphs diameter and references given therein for further recent research results on spectral estimates for weighted combinatorial Laplacians. If \(G\) is planar, we show in Theorem~\ref{thm:SpecBoundPlanComb} that the spectral gap \(\lambda_2(\mathcal L_{m,\mu})\) satifies the upper bound
	\begin{equation}\label{eq:intro-estimate-discrete1}
		\lambda_2(\mathcal{L}_{m,\mu})\leq 8\frac{\Deg{\mu}{\max}}{m(V)}
	\end{equation}
provided \(m\) does not concentrate too strongly in small regions of \(G\); more precisely, we have to assume that
	\begin{equation}\label{eq:2uvV}
		2\,(m(u)+m(v))<m(V)
	\end{equation}
holds for any pair of adjacent vertices \(u,v\in V\) -- here \(m(V)=\sum_{v\in V}m(v)\) denotes the total measure of \(G\) and \(\Deg{\mu}{\max}=\max_{v\in V}\sum_{e\in E_v}\mu(e)\) denotes the maximum degree of the vertices of \(G\) with respect to the weight \(\mu\). The condition~\eqref{eq:2uvV}, which we regard as a kind of smoothness of the weight function $m$, seems to be new in the literature. We will see that a spectral bound \eqref{eq:SpecBoundPlanComb} cannot hold for general vertex weights (see Remark~\ref{ex:CombExTeckAss}). However, in the special unweighted case \(m\equiv 1\) and \(\mu\equiv 1\) -- where the estimate \eqref{eq:intro-estimate-discrete1} coincides with the estimate \eqref{eq:intro-spielman-teng} by Spielman and Teng~\cite[Theorem 3.3]{SpielmanTeng:SpecPartWork07} -- the condition \eqref{eq:2uvV} does not need to be imposed, as we will discuss in Remark \ref{ex:CombExTeckAss}. The proof of \eqref{eq:intro-estimate-discrete1}, which is strongly inspired by the techniques used in \cite{SpielmanTeng:SpecPartWork07}, uses a representation for planar graphs via so-called circle packings (see Section 3.1) on the unit sphere in \(\mathbb R^3\). The main idea in the proof is to deform this circle-packing using a conformal map on the sphere to construct a sufficiently good test function in the Courant--Fischer Theorem for \(\lambda_2(\mathcal L_{m,\mu})\). During this deformation process our main task is going to be the identification of the condition \eqref{eq:2uvV} as a geometric condition for the existence of a certain type of circle packing that represents the weighted graph structure of \(G\) with the vertex weight \(m\) (see Lemma \ref{lem:Deformation}).\\

In the higher genus case -- in Section 4 -- we consider a special version of the weighted Laplacian \(\mathcal L_{m,\mu}\): for an edge weight \(\omega\) on \(G\), we consider the weighted normalized Laplacian on \(G\) given by
		\[(\mathcal L_{\mathrm{norm}}^\omega f)(v)=\frac{1}{\Deg{\omega}{v}}\sum_{e=\{u,v\}\in E_v}\omega(e)(f(v)-f(u)),\quad v\in V,\]
-- where \(\Deg{\omega}{v}=\sum_{e\in E_v}\omega\) is the weighted degree of the vertex \(v\) with respect to the edge weight \(\omega\) -- and show in Theorem~\ref{thm:est-normlapl} that its ordered eigenvalues \(0=\lambda_1(\mathcal L_{\mathrm{norm}}^\omega)<\lambda_2(\mathcal L_{\mathrm{norm}}^\omega)\leq\lambda_{|V_G|}(\mathcal L_{\mathrm{norm}}^\omega)\) satisfy
	\begin{equation}\label{eq:intro-estimate-discrete2}
		\lambda_k(\mathcal L_{\mathrm{norm}}^\omega)\leq C\frac{\Deg{\omega}{\max} (g+k)}{\sum_{e\in E} \omega(e)},\quad k=1,\ldots,|V_G|
	\end{equation}
with the maximal degree \(\Deg{\omega}{\max}=\max_{v\in V}\Deg{\omega}{v}\) and a generic constant \(C>0\). In the proof of this estimate,  we construct -- inspired by the techniques used by Amini and Cohen-Steiner in \cite{AminiCohenSteiner:TransferPrinc}, and extending them to the weighted case -- a Riemannian metric on a closed and oriented surface of genus \(g\) and a certain topological double cover of the surface, so that the \emph{vicinity graph} associated with this double cover (see Section \ref{subsec:ACS-theorem} for details) reflects not only the combinatorial structure of \(G\) but also the weighted structure induced by \(\omega\). From there we can exploit a transfer principle that was introduced in \cite{AminiCohenSteiner:TransferPrinc} to reduce the estimate \eqref{eq:intro-estimate-discrete2} to the estimate \eqref{eq:Hassannezhad} by Hassannezhad. The main difference in the construction compared with the one by Amini and Cohen-Steiner lies in the choice of the Riemannian metric, as we need to make sure that the Riemannian measure of the single double covering elements -- which are chosen so that they are isomorphic to compositions of Euclidean triangles -- correspond of the weight \(\omega\) while still preserving a certain convexity condition of the covering elements -- see Section \ref{subsec:proof-genus} for this construction.
\section{Preliminaries on combinatorial and metric graphs}
\subsection{Compact metric graphs}\label{subsec:prel-metric-graphs}
A \emph{compact metric graph} \(\mathcal G\) is the object obtained after gluing the end points of a finite number of bounded intervals in a graph-like way -- see \cite{mugnolo2019actually} for a rigorous definition. Usually, the set of intervals is referred to as the edge set of \(\mathcal G\) whereas the set of glued end points is referred to as the vertex set of \(\mathcal G\). As this gluing process naturally defines a combinatorial graph structure, we shall write \(\mathcal G=(G,\ell)\), where \(G=(V_G,E_G)\) is a connected combinatorial graph with finite vertex set \(V=V_G\) and finite edge set \(E=E_G\) and \(\ell:E\rightarrow(0,\infty),\ e\mapsto\ell_e\) is a length function that assigns the length \(\ell_e\) to a given edge \(e\in E\). We point out that this representation of \(\mathcal G\) is not unique since adding vertices on the edges changes the underlying combinatorial graph whereas the metric graph remains the same. Throughout this paper we assume that \(\mathcal G\) is connected as a topological space since the study of disconnected graphs is usually reduced to the study of its connected components.  For notational purposes we shall also fix an orientation on the combinatorial graph \(G\) -- let us however emphasize that the analytic and spectral properties of the objects considered in this paper do not depend on the choice of this particular orientation. For a given edge \(e\in E\) we write \(e_{\init}\) for its \emph{initial vertex} and \(e_\term\) for its \emph{terminal vertex} and for each vertex \(v\in V\) let \(E_{\init}^v\) and \(E_{\term}^v\) be the sets of edges initiating from \(v\) and terminating in \(v\) respectively. Then the \emph{metric degree} \(\Deg{\ell}{v}\) of a vertex \(v\in V\) is given by
	\[\Deg{\ell}{v}:=\sum_{e\in E_{\init}^v}\ell_e+\sum_{e\in E_{\term}^v}\ell_e.\]
(Note that \(e_\init=e_\term\) holds if and only if \(e\) is a loop, so loops incident to \(v\) are counted twice in the sum above.) The Hilbert space of square-integrable functions on \(\mathcal G\) is
	\[L^2(\mathcal G):=\bigoplus_{e\in E}L^2(0,\ell_e)\]
equipped with the scalar product given by
	\[(\varphi,\psi)_{L^2(\mathcal G)}:=\sum_{e\in E}\int_0^{l_e}\varphi_e(x_e) \bar\psi_e(x_e)\diff x_e\]
for \(\varphi=(\varphi_e)_{e\in E}\) and \(\psi=(\psi_e)_{e\in E}\) in \(L^2(\mathcal G)\).  The \emph{Kirchhoff Laplacian} \(\Delta_{\mathcal G}\) in \(L^2(\mathcal{G})\) is the operator acting edgewise as the negative second derivative, i.e.
	\[\Delta_{\mathcal G} \varphi=\left(-\varphi''_e\right)_{e\in E}\]
defined on the space of functions \(\varphi = (\varphi_e)_{e\in E}\), so that \(\varphi_e\in H^2(0,l_e)\) for all \(e\in E\) and the following two conditions are satisfied for every vertex \(v\in V\):
\begin{equation}
\begin{cases}
	\varphi \text{ is continuous at }v,\\
	\sum_{e\in E_{\term}^v}\varphi'_e(\ell_e)-\sum_{e\in E_{\init}^v}\varphi'_e(0)=0.
\end{cases}
\end{equation}
The Kirchhoff Laplacian is a nonnegative self-adjoint operator in \(L^2(\mathcal G)\) with discrete spectrum. The constant functions lie in the null space of $\Delta_{\mathcal G}$ and, as \(\mathcal G\) is connected, \(0\) is an eigenvalue of \(\Delta_{\mathcal G}\) of multiplicity \(1\). For a proof of these facts and further information on the analytic properties of the Kirchhoff Laplacian we refer to the text books \cite{BerkolaikoKuchment:IntrQG,Mugnolo:SemiMeth}. We shall denote the ordered eigenvalues of \(\Delta_{\mathcal G}\), counted with multiplicities, by
	\[0=\lambda_1(\Delta_{\mathcal G})<\lambda_2(\Delta_{\mathcal G})\leq \lambda_3(\Delta_{\mathcal G})\leq\ldots \rightarrow \infty.\]
The quadratic form \(Q_\mathcal G\) associated with \(\Delta_\mathcal G\) is given by
	\[Q_\mathcal G(\varphi)=\sum_{e\in E}\int_0^{\ell_e}|\varphi_e'(x_e)|^2\diff x_e\]
and its domain is the space consisting of all functions \(\varphi=(\varphi_e)_{e\in E}\) with \(\varphi_e\in H^1(0,\ell_e)\) for all \(e\in E\) that are continuous in every vertex \(v\in V\).
\subsection{Weighted combinatorial graphs}\label{subsec:prel-weight-comb-graphs}
Let \(G=(V,E)\) be a connected graph with finite edge and vertex sets. In the combinatorial setting we shall assume that the graph \(G\) is simple. This assumption simplifies some notation and is actually no restriction in the case of metric graphs, since -- as we mentioned before -- we may always add ``dummy'' vertices on loops or parallel edges to obtain a simple graph, which does not change the spectral properties of the Kirchhoff Laplacian (see \cite[Remark 1.4.2]{BerkolaikoKuchment:IntrQG}). We may thus identify each edge \(e\in E\) with the two element set \(\{u,v\}\) of its incident vertices \(u,v\in V\). Suppose \(m:V\rightarrow (0,\infty)\) is a positive weight functions on the vertex set. We think of \((V,m)\) as a finite measure space setting \(m(U):=\sum_{u\in U}m(u)\) for subsets \(U\subset V\). Let \(\ell^2_m(V)\) denote the vector space of complex valued functions \(f:V\rightarrow\C\) equipped with the weighted scalar product given by
		\[(f,g)_{\ell^2_m(V)}=\sum_{v\in V}m(v){f(v)}\overline{g(v)}.\]
	and its induced norm given by
	\begin{align}
		\|f\|_{\ell^2_m(V)}^2=\sum_{v\in V}m(v)|f(v)|^2\label{l2mnorm}
	\end{align}
	Moreover, let \(\mu:E\rightarrow (0,\infty)\) be a positive weight function on the edge set. The weighted degree of a vertex \(v\in V\) with respect to \(\mu\) is \(\Deg{\mu}{v}=\sum_{e\in E_v}\mu(e)\) and we set \(\Deg{\mu}{\max}:=\max_{v\in V}\Deg{\mu}{v}\). On \(\ell^2_m(V)\) we consider the nonnegative, self-adjoint operator \(\mathcal{L}_{m,\mu}:\ell^2_m(V)\rightarrow \ell^2_m(V)\) given by
	\begin{align}
		(\mathcal{L}_{m,\mu}f)(u)=\frac{1}{m(u)}\sum_{e=\{u,v\}\in E_v}\mu(e)(f(u)-f(v))\label{eq:CombLapl}
	\end{align}
for \(u\in V\). 
	We refer to \(\mathcal{L}_{m,\mu}\) as the weighted combinatorial Laplacian. Its associated quadratic form \(q:\ell^2_m(V)\rightarrow [0,\infty)\) is given by
	\begin{align}
		q(f)=\sum_{e=\{u,v\}\in E}\mu(e)|f(u)-f(v)|^2.\label{QuadraticFormq}
	\end{align}
	The ordered eigenvalues of \(\mathcal L_{m,\mu}\), counted with multiplicities, shall be denoted by
		\[0=\lambda_1(\mathcal L_{m,\mu})<\lambda_2(\mathcal L_{m,\mu})\leq \lambda_3(\mathcal L_{m,\mu})\leq\ldots\leq \lambda_{|V|}(\mathcal L_{m,\mu}).\]
	(Note that \(0\) is an eigenvalue of \(\mathcal L\) of multiplicity \(1\), as \(G\) is connected.) It will also be convenient to consider vector-valued functions. For \(d\in \N\) let \(\ell^2_m(V;\C^d)\) be the space of vector valued functions \(f:V\rightarrow \C^d\). On \(\ell^2_m(V;\C^d)\) we consider the norm \(\|\cdot\|_{\ell^2_m(V,\C^d)}\) and the quadratic form \(q\) given by the expressions in (\ref{l2mnorm}) and (\ref{QuadraticFormq}), where we replace the absolute value on \(\C\) with the Euclidean norm on \(\C^d\) respectively. For the spectral gap \(\lambda_2(\mathcal L_{m,\mu})\) of \(\mathcal L_{m,\mu}\) we have the formula
	\begin{align}
	\lambda_2(\mathcal{L}_{m,\mu})=\inf\left\{\frac{q(f)}{||f||_{\ell^2_m(V;\C^d)}^2}~\Big|~f\in \ell^2_m(V;\C^d)\setminus\{0\},~ \sum_{v\in V}m(v)f(v)=0\right\}.\label{eq:CombCourantFischer}
	\end{align}
	For \(d=1\) this is the classical Courant--Fischer principle for the spectral gap of \(\mathcal L_{m,\mu}\). However, Spielman and Teng \cite[Lemma 3.1]{SpielmanTeng:SpecPartWork07} proved that this variational principle can be extended to vector-valued test functions in the unweighted case (\(m\equiv 1,~\mu\equiv 1\)) and their proof can easily be generalized to the weighted case.
\subsection{Spectral correspondence between metric and combinatorial graphs}\label{subsec:spect-corr}
For simplicity of notation we will again restrict ourselves to simple graphs in this section. As we mentioned in the introduction, the proof of our main results in this paper make use of the correspondence between Kirchhoff and a related combinatorial Laplacian. If the metric graph \(\mathcal{G}\) is equilateral (\(l\equiv 1\)), this duality is well-known \cite{vonBelow:CharEqAs,Cattaneo:SpectContLapl}, where the vertex and edge weights of the related Laplacian -- the unweighted normalized Laplacian -- are given by \(m(v)=d_v=|E_v|\) and \(\mu(e)=1\). For general connected and compact metric graphs we consider the weighted combinatorial Laplacian \(\mathcal L_{m,\mu}\) with the weight given by
	\begin{align}
		m(v) & = \Deg{\ell}{v}=\sum_{e\in E_v}l_e, &
		\mu(e) & = \frac{1}{l_e}.\label{eq:InducedWeights}
	\end{align}
We can refer to \cite{ExnerKostenkoMalamudNeidhard:SpecTheo, KostenkoNicolussi:SpecEst} for further details on the common analytical and spectral properties of \(\Delta_\mathcal G\) and \(\mathcal L_{m,\mu}\). Our aim is to derive an optimal estimate that compares the eigenvalues of these two operators. Our proof is based on the following abstract result: it is probably already known, but we could not find an appropriate reference for it.
\begin{proposition}\label{prop:abstrhilb}
	Suppose \(H_1, H_2\) are two Hilbert spaces and \(J:H_1\hookrightarrow H_2\) is an injective linear bounded operator. For \(j=1,2\) let \(A_j\) be a lower semibounded self-adjoint operator on \(H_j\) with associated quadratic form \(q_j\) and suppose that \(A_2\) has discrete spectrum. Furthermore, we assume that there are constants \(\alpha,\beta>0\), so that
	\begin{enumerate}[label = (\roman*)]
	\item \(\|J x\|_{H_2}^2\geq \alpha \|x\|_{H_1}^2\) for all \(x\in H_1\),
	\item \(Jx\in D(q_2)\) and \(q_2(J x)\leq \beta q_1(x)\) for all \(x\in D(q_1)\).
	\end{enumerate}
	Then \(A_1\) has discrete spectrum and we have
	\begin{equation}\label{eq:abstract-EV-estimate}
		\lambda_k(A_2)\leq \frac{\beta}{\alpha}\lambda_k(A_1),\quad k=1,2,\ldots
	\end{equation}
	for the ordered eigenvalues \(\lambda_1(A_j)\leq \lambda_2(A_j)\leq\ldots\), counting multiplicities, of \(A_j\) for \(j=1,2\) respectively.	
\end{proposition}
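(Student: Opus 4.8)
The plan is to argue entirely through the variational (form) min--max characterisation of the eigenvalues, and to recover the discreteness of $A_1$ as an \emph{output} of the comparison rather than an input. For a lower semibounded self-adjoint operator $A$ with quadratic form $q$ and form domain $D(q)$, write
\[
	\mu_k(A):=\inf_{\substack{U\subseteq D(q)\\ \dim U=k}}\ \sup_{0\neq x\in U}\frac{q(x)}{\|x\|^2},\qquad k\in\N .
\]
By the standard min--max theorem these numbers are non-decreasing in $k$; they coincide with the ordered eigenvalues $\lambda_k(A)$ as long as they stay strictly below $\inf\sigma_{\mathrm{ess}}(A)$; and, crucially, $A$ has purely discrete spectrum if and only if $\mu_k(A)\to\infty$. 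Since $A_2$ is assumed to have discrete spectrum, we already know $\mu_k(A_2)=\lambda_k(A_2)$ for every $k$ and $\lambda_k(A_2)\to\infty$.

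First I would establish the single scalar inequality $\mu_k(A_2)\le\frac{\beta}{\alpha}\mu_k(A_1)$ for each fixed $k$. Take any $k$-dimensional subspace $U\subseteq D(q_1)$. Because $J$ is linear and injective, $JU:=\{Jx:x\in U\}$ is again $k$-dimensional, and hypothesis (ii) guarantees $JU\subseteq D(q_2)$, so $JU$ is an admissible competitor in the infimum defining $\mu_k(A_2)$. For $0\neq x\in U$, hypothesis (i) gives $\|Jx\|_{H_2}^2\ge\alpha\|x\|_{H_1}^2>0$ (in particular $Jx\neq0$), while (ii) gives $q_2(Jx)\le\beta\,q_1(x)$; combining these and using that the forms in question are non-negative yields
\[
	\frac{q_2(Jx)}{\|Jx\|_{H_2}^2}\ \le\ \frac{\beta}{\alpha}\,\frac{q_1(x)}{\|x\|_{H_1}^2}.
\]
Passing to the supremum over $0\neq x\in U$ and then to the infimum over all such $U$ gives $\mu_k(A_2)\le\frac{\beta}{\alpha}\mu_k(A_1)$.

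It then remains to transfer discreteness back to $A_1$ and finish. From the inequality just proved together with $\mu_k(A_2)=\lambda_k(A_2)\to\infty$ we get $\mu_k(A_1)\ge\frac{\alpha}{\beta}\mu_k(A_2)\to\infty$. By the characterisation recalled above, $\mu_k(A_1)\to\infty$ forces $\sigma_{\mathrm{ess}}(A_1)=\emptyset$, so $A_1$ has purely discrete spectrum and $\mu_k(A_1)=\lambda_k(A_1)$ for all $k$. Substituting $\mu_k(A_2)=\lambda_k(A_2)$ and $\mu_k(A_1)=\lambda_k(A_1)$ into $\mu_k(A_2)\le\frac{\beta}{\alpha}\mu_k(A_1)$ gives exactly \eqref{eq:abstract-EV-estimate}.

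The point needing the most care -- and what I regard as the crux -- is precisely this deduction of the discreteness of $A_1$ as a conclusion: it is the equivalence ``$\mu_k\to\infty$ iff purely discrete spectrum'' that makes the identification $\mu_k(A_1)=\lambda_k(A_1)$ legitimate, and without it the variational numbers could stabilise at the bottom of an essential spectrum and the final comparison would be meaningless. The Rayleigh-quotient step is otherwise routine; the only subtlety there is that passing from $q_2(Jx)\le\beta q_1(x)$ and $\|Jx\|_{H_2}^2\ge\alpha\|x\|_{H_1}^2$ to the quotient inequality uses non-negativity of the forms. This is automatic in the intended applications, where $A_1=\Delta_\mathcal G$ and $A_2=\mathcal L_{m,\mu}$ are non-negative, so no further reduction is required.
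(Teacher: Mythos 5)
Your proposal is correct, but it runs along a genuinely different route than the paper's proof. The paper first passes to the closed subspace \(\tilde H_2:=J(H_1)\) (closed thanks to (i)) and the operator \(\tilde A_2\) induced by restricting \(q_2\), so that \(J\) becomes an isomorphism; it then obtains discreteness of \(A_1\) \emph{operator-theoretically}, by fitting the form-domain embeddings \(I_j:(D(q_j),\|\cdot\|_{q_j})\rightarrow H_j\) into a commutative square with \(J\), observing that discreteness of \(A_2\) makes \(I_2\) compact, and concluding that \(I_1\) is compact; the eigenvalue inequality is then delegated to Courant--Fischer in a single sentence. You instead work throughout with the min--max values \(\mu_k\), prove the one inequality \(\mu_k(A_2)\leq\frac{\beta}{\alpha}\mu_k(A_1)\) by pushing \(k\)-dimensional subspaces forward under \(J\), and recover discreteness of \(A_1\) from the standard dichotomy that \(\mu_k\to\infty\) if and only if the essential spectrum is empty. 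This buys you a shorter, more self-contained argument: you avoid the reduction to \(\tilde H_2\) and \(\tilde A_2\) (together with the implicit checks that the restricted form is densely defined in \(\tilde H_2\) and that the passage only increases eigenvalues) and the compactness argument, at the price of quoting the behaviour of the min--max values at the bottom of the essential spectrum; the paper's route is arguably more conceptual, identifying discreteness with compactness of the form-domain embedding. One further point in your favour: your caveat about non-negativity is not pedantry. For merely lower semibounded forms the Rayleigh-quotient step, and with it the proposition as literally stated, fails: take \(H_1=H_2=\C\), \(A_1=-1\), \(A_2=-\tfrac14\), \(Jx=2x\), \(\alpha=\beta=1\); then (i) and (ii) hold, but
\begin{equation*}
	\lambda_1(A_2)=-\tfrac14>\,-1=\tfrac{\beta}{\alpha}\,\lambda_1(A_1).
\end{equation*}
The paper's proof hides the very same quotient computation inside the phrase ``follows from the Courant--Fischer Theorem'' and needs non-negativity at exactly the same point; since in the intended applications \(\Delta_\mathcal G\) and \(\mathcal L_{m,\mu}\) are non-negative, neither proof loses anything, but you are right to have made the assumption explicit.
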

\begin{proof}
	After restricting ourselves to the closed subspace \(\tilde H_2:=J(H_1)\) and the self-adjoint operator \(\tilde A_2\) associated with the quadratic form \(\tilde q_2:={q_2}_{|D(q_2)\cap \tilde H_2}\) on \(\tilde H_2\) we may assume that \(J\) is surjective and, thus, an isomorphism by (i). (Note that passing to \(\tilde H_2\) and \(\tilde A_2\) preserves the discreteness of the spectrum and only increases the eigenvalues of \(A_2\).)
	
	Then, by (i) and (ii),
	\begin{center}
	\begin{tikzcd}
	(D(q_1),\|\cdot\|_{q_1})\arrow[r,hook,"J_{|D(q_1)}"]\arrow[hook,d,"I_1"] & (D(q_2),\|\cdot\|_{q_2})\arrow[hook,d,"I_2"]\\
	(H_1,\|\cdot\|_{H_1})\arrow[r,"\sim","J"'] & (H_2,\|\cdot\|_{H_2})
	\end{tikzcd}	
	\end{center}
	is a commutative diagram of linear bounded operators, where \(||\cdot||_{q_j}\) denotes the norm induced by the quadratic form \(q_j\) as well as the scalar product on \(D(q_j)\) and \(I_j:D(q_j)\rightarrow H_j\) is the canonical embedding for \(j=1,2\) respectively. As \(A_2\) has discrete spectrum, \(I_2\) is a compact embedding. This implies -- using the commutativity of the diagram above and the fact that \(J\) is an isomorphism -- that \(I_1\) is compact, which in turn implies that \(A_1\) has discrete spectrum. Finally, the eigenvalue estimate \eqref{eq:abstract-EV-estimate} follows from the Courant--Fischer Theorem for the eigenvalues of \(A_1\) and \(A_2\) and the injectivity of \(J\).
\end{proof}
\begin{corollary}\label{cor:CompareSpectralGaps}
	Let \(\mathcal{G}=(G,\ell)\) be a connected, simple and compact metric graph. Let \(\mathcal{L}_{m,\mu}\) be the weighted combinatorial Laplacian on \(G\) with respect to the vertex weight \(m\) and the edge weight \(\mu\) defined in \eqref{eq:InducedWeights}. Then the eigenvalues of  \(\Delta_\mathcal G\) and \(\mathcal{L}_{m,\mu}\) satisfy the inequality
	\begin{align}
		\lambda_k(\Delta_\mathcal G)\leq \frac{\pi^2}{2}\lambda_k(\mathcal{L}_{m,\mu}),\quad k=1,\ldots,|V|.\label{eq:CompareSpectralGaps}
	\end{align}
\end{corollary}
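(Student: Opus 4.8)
The plan is to apply Proposition~\ref{prop:abstrhilb} with the combinatorial data in the role of the first space and the metric data in the role of the second, i.e. $H_1=\ell^2_m(V)$, $A_1=\mathcal L_{m,\mu}$, $q_1=q$ and $H_2=L^2(\mathcal G)$, $A_2=\Delta_\mathcal G$, $q_2=Q_\mathcal G$. Since $\Delta_\mathcal G$ has discrete spectrum, the whole task reduces to producing an injective bounded operator $J\colon\ell^2_m(V)\to L^2(\mathcal G)$ satisfying (i) and (ii) with $\beta/\alpha=\pi^2/2$. The naive candidate is piecewise-linear interpolation of the vertex values; it makes (ii) hold with $\beta=1$ but, because a linear interpolant through opposite values passes through zero, only yields $\alpha=1/6$ and hence the suboptimal constant $6$. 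The decisive idea is instead to interpolate with a half-period cosine: identifying an edge $e$ with $[0,\ell_e]$ (initial vertex at $0$, terminal vertex at $\ell_e$) I set
\[
(Jf)_e(x)=\frac{f(e_\init)+f(e_\term)}{2}+\frac{f(e_\init)-f(e_\term)}{2}\cos\!\Big(\frac{\pi x}{\ell_e}\Big).
\]
Then $(Jf)_e(0)=f(e_\init)$ and $(Jf)_e(\ell_e)=f(e_\term)$, so $Jf$ attains the value $f(v)$ at each vertex $v$, is therefore continuous, and lies in $D(Q_\mathcal G)$; injectivity and boundedness are immediate on the finite-dimensional space $\ell^2_m(V)$.

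The two constants then come out of short edgewise computations. For the Dirichlet form, $(Jf)_e'(x)=-\tfrac{\pi}{2\ell_e}(f(e_\init)-f(e_\term))\sin(\pi x/\ell_e)$ and $\int_0^{\ell_e}\sin^2(\pi x/\ell_e)\,\diff x=\ell_e/2$, so $\int_0^{\ell_e}|(Jf)_e'|^2=\tfrac{\pi^2}{8\ell_e}|f(e_\init)-f(e_\term)|^2$; summing over edges and recalling $\mu(e)=1/\ell_e$ gives $Q_\mathcal G(Jf)=\tfrac{\pi^2}{8}\,q(f)$ exactly, i.e. (ii) with $\beta=\pi^2/8$. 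For the $L^2$-norm the mechanism that makes the cosine superior is that $\int_0^{\ell_e}\cos(\pi x/\ell_e)\,\diff x=0$, which annihilates the cross term; together with $\int_0^{\ell_e}\cos^2=\ell_e/2$ one obtains
\[
\int_0^{\ell_e}|(Jf)_e|^2\,\diff x=\ell_e\Big(\tfrac38\big(|f(e_\init)|^2+|f(e_\term)|^2\big)+\tfrac14\Re\big(f(e_\init)\overline{f(e_\term)}\big)\Big)\ge\frac{\ell_e}{4}\big(|f(e_\init)|^2+|f(e_\term)|^2\big),
\]
using the elementary bound $\Re(a\bar b)\ge-\tfrac12(|a|^2+|b|^2)$. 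Summing over edges and invoking $m(v)=\sum_{e\in E_v}\ell_e$ from~\eqref{eq:InducedWeights}, the right-hand side equals $\tfrac14\|f\|_{\ell^2_m(V)}^2$, so (i) holds with $\alpha=1/4$. Proposition~\ref{prop:abstrhilb} now yields $\lambda_k(\Delta_\mathcal G)\le(\beta/\alpha)\lambda_k(\mathcal L_{m,\mu})=\tfrac{\pi^2}{2}\lambda_k(\mathcal L_{m,\mu})$ for $k=1,\dots,|V|$, the range being dictated by $\mathcal L_{m,\mu}$ having exactly $|V|$ eigenvalues.

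I do not expect a genuine technical obstacle here — every estimate is a one-line integral — so the real ``hard part'' is conceptual: realizing that linear interpolation is wasteful and that the half-period cosine is exactly the right interpolant. This is no accident: on a single edge $\cos(\pi x/\ell_e)$ is precisely the second Neumann eigenfunction of $-\tfrac{\diff^2}{\diff x^2}$, which is why the sharp constant $\pi^2/2$ materializes and why the bound is attained on the single-edge graph (where $\lambda_2(\Delta_\mathcal G)=\pi^2/\ell^2$ and $\lambda_2(\mathcal L_{m,\mu})=2/\ell^2$), consistent with von Below's relation~\eqref{eq:VonBelow} at the extremal value $\lambda_2(\mathcal L_{\mathrm{norm}})=2$. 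The only points requiring a little care are checking continuity of $Jf$ at the vertices (so that $Jf$ genuinely lies in the form domain of $\Delta_\mathcal G$) and confirming the orthogonality $\int_0^{\ell_e}\cos(\pi x/\ell_e)\,\diff x=0$ that removes the cross term and thereby improves $\alpha$ from $1/6$ to $1/4$.
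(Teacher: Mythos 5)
Your proposal is correct and follows essentially the same route as the paper: the same edgewise half-period cosine interpolant $J$, the same computations giving $\alpha=\tfrac14$ (via the vanishing cross term $\int_0^{\ell_e}\cos(\pi x/\ell_e)\,\diff x=0$) and $\beta=\tfrac{\pi^2}{8}$, and the same application of Proposition~\ref{prop:abstrhilb} with $A_1=\mathcal L_{m,\mu}$, $A_2=\Delta_\mathcal G$. Your added observations — that linear interpolation only yields the constant $6$ of Kostenko--Nicolussi, and that the cosine is the second Neumann eigenfunction on an edge, explaining sharpness — are accurate and consistent with the paper's surrounding remarks.
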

\begin{proof}
	We consider the linear embedding \(J:\ell^2_m(V)\hookrightarrow L^2(\mathcal G)\) that assigns a given function \(f\in \ell^2_m(V)\) the edgewise trigonometric function \(Jf=(\varphi_e)_{e\in E}\in D(Q_\mathcal G)\) given by
		\[\varphi_e(x_e)=\frac{f(e_\init)+f(e_\term)}{2}+\frac{f(e_\init)-f(e_\term)}{2}\cos\left(\frac{x_e\pi}{\ell_e}\right)\]
	for \(e\in E\) and \(x_e\in [0,\ell_e]\). After deriving the respective \(L^2\)-norms of \(\varphi_e\) and \(\varphi_e'\) one obtains
	\begin{align*}
		\|Jf\|_{L^2(\mathcal G)}^2 & =\sum_{e\in E}\frac{\ell_e}{8}\left(3|f(e_\term)|^2+2\mathrm{Re}\left(f(e_\term)\overline{f(e_\init)}\right)+3|f(e_\init)|^2\right)\\
		& \geq \frac{1}{4}\sum_{e\in E}\ell_e\left(|f(e_\term)|^2+|f(e_\init)|^2\right) =\frac{1}{4}\sum_{v\in V}\Deg{\ell}{v}|f(v)|^2 =\frac{1}{4}\|f\|_{\ell^2_m(V)}
	\end{align*}
	and
	\begin{align*}
		Q_\mathcal G(Jf) =\frac{\pi^2}{2}\sum_{e\in E}\frac{1}{\ell_e}|f(e_\term)-f(e_\init)|^2=\frac{\pi^2}{8}q(f).
	\end{align*}
	We can hence apply Proposition~\ref{prop:abstrhilb} with the operators \(A_1=\mathcal{L}_{m,\mu}\) and \(A_2=\Delta_{\mathcal G}\), and applying \eqref{eq:abstract-EV-estimate} with \(\alpha=\frac{1}{4}\) and \(\beta=\frac{\pi^2}{8}\) proves the claim.
\end{proof}
\begin{remark}
	\begin{remarklist}
	\item Our estimate \eqref{eq:CompareSpectralGaps} is in line with von Below's formula \eqref{eq:VonBelow}. As \(\arccos(1-\lambda)^2\leq \frac{\pi^2}{2}\lambda\) holds for each \(\lambda\in [0,2]\), \eqref{eq:VonBelow} implies that \(\lambda_2(\Delta_\mathcal G)\leq \frac{\pi^2}{2}\lambda_2(\mathcal L_{\mathrm{norm}})\) if \(\mathcal G\) is equilateral.
	 \item A slightly weaker version of \eqref{eq:CompareSpectralGaps} was recently proved by Kostenko and Nicolussi \cite[Lemma 2.10]{KostenkoNicolussi:SpecEst} in the setting of infinite metric graphs:  they have showed that
\begin{align*}	 
	 \inf\sigma(\Delta_\mathcal G^F)  \leq 6\inf\sigma(\mathcal L_{m,\mu}^F)\quad\hbox{and}\quad \inf\sigma_{\mathrm{ess}}(\Delta_\mathcal G^F)  \leq 6\inf\sigma_{\mathrm{ess}}(\mathcal L_{m,\mu}^F)
\end{align*}
hold for the bottom of the (essential) spectra of the Friedrichs extensions \(\Delta_\mathcal G^F\) and \(\mathcal L_{m,\mu}^F\) of compactly supported versions of the Kirchhoff Laplacian and discrete Laplacian respectively. To obtain this result they have chosen edgewise linear functions in the Courant--Fischer Theorem, rather than edgewise trigonometric functions. Our proof also applies to this case. Indeed, it can be shown that the estimates
\begin{align*}	
	\inf\sigma(\Delta_\mathcal G^F) \leq \frac{\pi^2}{2}\inf\sigma(\mathcal L_{m,\mu}^F)\quad\hbox{and}\quad \inf\sigma_{\mathrm{ess}}(\Delta_\mathcal G^F) \leq \frac{\pi^2}{2}\inf\sigma_{\mathrm{ess}}(\mathcal L_{m,\mu}^F)
\end{align*}
hold. Furthermore, the statement of Corollary \ref{cor:CompareSpectralGaps} also holds for the eigenvalues of \(\Delta_\mathcal G^F\)  and \(\mathcal L_{m,\mu}^F\) on infinite graphs if  \(\Delta_\mathcal G^F\) has discrete spectrum.
	\item The estimate \eqref{eq:CompareSpectralGaps} is in fact sharp for all $k$. In order to see, we this consider the equilateral star graph  \(\mathcal S_n\) with \(n\) edges of constant length \(\ell\equiv 1\). The smallest \(n+1\) eigenvalues of \(\Delta_{\mathcal S_n}\) are \(0\), \(\frac{\pi^2}{4}\) (of multiplicity \(n-1\)) and \(\pi^2\). The eigenvalues of the corresponding normalized Laplacian are \(0\), \(1\) (of multiplicity \(n-1\)) and \(2\). Thus, equality is achieved in \eqref{eq:CompareSpectralGaps} for \(k=n+1\).
	\end{remarklist}
\end{remark}
\section{Eigenvalue bounds for Planar Graphs}\label{sec:eigenvboundplanar}
\subsection{A Technical Tool: Circle Packings for Planar Graphs}\label{subsec:CirclePack}
We recall that a finite graph \(G=(V,E)\) is called \emph{planar} if there exists a drawing of \(G\) in the plane, such that every edge in \(E\) is represented by a Jordan curve and any two of these Jordan curves only intersect at their respective endpoints if the associated edges are incident. Although this classical definiton of planarity is based on topological concepts, a purely combinatorial characterization of planarity is available: Kuratowski's Theorem \cite{Kuratowski:Sur} states that a finite graph \(G=(V,E)\) is planar if and only if a subgraph of \(G\) is a subdivision of the complete graph \(K_5\) or of the complete bipartite graph \(K_{3,3}\). Here, a subdivsion of some graph \(G\) is a graph obtained after successively inserting vertices on the edges of \(G\). A different, more geometric concept to characterize planar graphs is stated in the Circle Packing Theorem by Koebe, Andreev and Thurston \cite{Koebe:Kontaktpropleme, Andreev:convpoly1, Andreev:convpoly2, Thurston:TheGeoAndTop}:
\begin{theorem}
	A simple, finite graph \(G\) is planar if and only if there exists a family \((D_v)_{v\in V}\) of closed disks \(D_v\subset \R^2\), such that the following holds for any two vertices \(v\neq u\) in \(V\):
	\begin{enumerate}[label = (\roman*)]
		\item If \(v\) and \(u\) are adjacent, the disks \(D_v\) und \(D_u\) intersect in exactly one point.
		\item If \(v\) and \(u\) are not adjacent, the disks \(D_v\) and \(D_u\) are disjoint.
	\end{enumerate}
	In this case, we call \((D_v)_{v\in V}\) a univalent circle packing of \(G\) in the plane.
\end{theorem}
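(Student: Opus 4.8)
The plan is to treat the two implications separately; the forward implication (existence of a packing \(\Rightarrow\) planarity) is elementary, whereas the converse carries the substantial Koebe--Andreev--Thurston content. For the easy direction, suppose \((D_v)_{v\in V}\) is a univalent circle packing. I would place each vertex \(v\) at the center \(c_v\) of \(D_v\) and draw, for every edge \(\{u,v\}\in E\), the straight segment from \(c_u\) to \(c_v\); by condition (i) this segment passes through the unique contact point of \(D_u\) and \(D_v\) and otherwise stays in the union \(D_u\cup D_v\). Since the disks have pairwise disjoint interiors, two such segments can only meet at a shared center or at a contact point belonging to incident edges, so the drawing realizes \(G\) without crossings and \(G\) is planar. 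The sole point requiring care is that a contact point of \(D_u\) and \(D_v\) lies on no other segment, which holds because such a point sits on the boundary of exactly two disks.

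For the hard direction I would first reduce to triangulations. Given a planar simple graph \(G\), I fix a plane drawing and triangulate each face by inserting one \emph{new} vertex in its interior, joined to all vertices on the face boundary. This produces a maximal planar graph \(\hat G\) in which the adjacency relation \emph{among the original vertices is unchanged}, since no edge between two original vertices is ever added. It therefore suffices to pack \(\hat G\) and restrict to \((D_v)_{v\in V}\): original adjacent vertices remain tangent, while original non-adjacent vertices stay non-adjacent in \(\hat G\) and hence receive disjoint disks. The core problem is thus to pack a triangulation of the sphere.

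To pack a triangulation \(T\), I would search for a radius \(r_v>0\) at each vertex so that mutually tangent circles of these radii close up consistently. For a face \(\{i,j,k\}\), the Euclidean triangle of centers has side lengths \(r_i+r_j\), \(r_i+r_k\), \(r_j+r_k\), and the law of cosines gives the angle at \(i\) as an explicit function
\[
\alpha_i(r_i,r_j,r_k)=\arccos\frac{(r_i+r_j)^2+(r_i+r_k)^2-(r_j+r_k)^2}{2(r_i+r_j)(r_i+r_k)} .
\]
A consistent layout exists exactly when, around every vertex \(v\), the incident angles sum to \(2\pi\) (with a spherical correction absorbed by removing one face and working on a disk triangulation). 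I would solve this nonlinear system by a variational argument: the angle functions arise as the gradient of a strictly convex energy of Colin de Verdière type (equivalently, a hyperbolic ideal-polyhedron volume), so its unique critical point furnishes admissible radii; alternatively one invokes the method of continuity together with invariance of domain. From the radii one lays out the circles face by face, the angle condition guaranteeing global consistency, and uniqueness up to M\"obius transformations follows from the strict convexity. Finally, stereographic projection, which preserves circles, tangency and disjointness, transports the sphere packing to the desired family \((D_v)_{v\in V}\subset\R^2\).

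The main obstacle is the core existence step: solving the angle system with all radii strictly positive. The delicate parts will be establishing the strict convexity (or the nondegeneracy required by the degree/continuity alternative), controlling boundary behavior so that an energy minimizer does not let radii escape to \(0\) or \(\infty\), and correctly reconciling the Euclidean angle sum used in the layout with the spherical angle sum needed when passing between the disk triangulation and the closed sphere.
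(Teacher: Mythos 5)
The paper does not prove this statement at all: it is the classical Koebe--Andreev--Thurston Circle Packing Theorem, quoted with citations to Koebe, Andreev and Thurston, so there is no internal proof to compare yours against. Judged on its own terms, your easy direction is complete and correct: the segment between centers of tangent disks passes through the contact point and lies in the union of the two disks, and your observation that a point can lie on the boundary of at most two disks of a univalent packing is exactly what rules out spurious crossings. Your hard direction follows the standard modern route (Thurston's angle-sum equations, solved by a convex variational principle of Colin de Verdi\`ere type, then a face-by-face layout and stereographic projection), which is a legitimate and well-trodden proof strategy for the theorem.

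However, as written the argument has a genuine gap precisely at the decisive point, and you acknowledge it yourself: the existence of a strictly positive radius vector solving the angle system --- the strict convexity of the energy, the boundary analysis preventing radii from escaping to \(0\) or \(\infty\), and the local-to-global step showing that the angle condition makes the layout a genuine embedding rather than merely an immersion with the correct cone angles --- is the entire content of the theorem, and none of it is carried out. A smaller, fixable flaw sits in your reduction: coning each face once does not in general yield a maximal planar graph when \(G\) fails to be \(2\)-connected, since a face boundary walk may repeat vertices (for a path, the outer face of the coned graph is a quadrilateral), so you must iterate the coning or first deal with cut vertices and bridges; disconnected \(G\) also needs a word (pack components in disjoint regions). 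The key structural idea --- never adding an edge between two original vertices, so that condition (ii) survives the restriction of the packing of \(\hat G\) to \(V\) --- is correct and is exactly what makes the reduction sound once the triangulation step is repaired.
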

	Let us transfer the concept of circle packings to the unit sphere:
	\begin{definition}	
	A subset \(k\) of the unit sphere \(S^2\subset \R^3\) is called a circular line, if it is the non-trivial intersection of \(S^2\) with a hyperplane \(H\subset\R^3\). (By non-trivial we mean that \(k\) is neither empty nor just one point). A connected, closed subset \(C\subset S^2\) is called a spherical cap, if its boundary in \(S^2\) is a circular line.
	\end{definition}
\begin{figure}
\centering
\begin{minipage}{0.5\textwidth}
	\tikzstyle{my style}=[circle, draw, fill=black!50,
                        inner sep=0pt, minimum width=4pt]
        \centering
        \begin{tikzpicture}[scale=0.5]%
        \draw[color=gray] (0,0) circle [radius=(3-1.5*sqrt(3))*1cm] ;
    	\draw \foreach \x in {90,210,330} {
      (\x:3cm) node[my style]{}-- (\x+120:3cm)
      (\x:3cm) node[circle through = {($(\x:3cm)!0.5!(\x+120:3cm)$)},draw=gray]{}
      (\x:3)--(0,0)
      (0,0) node[my style]{}
};
\end{tikzpicture}
        \caption{A univalent circle packing for the complete graph \(K_4\)}
\end{minipage}\hfill
\begin{minipage}{0.5\textwidth}
\pgfplotsset{compat=1.10}

\pgfdeclareradialshading[tikz@ball]{ball}{\pgfqpoint{0bp}{0bp}}{%
 color(0bp)=(tikz@ball!0!white);
 color(7bp)=(tikz@ball!0!white);
 color(15bp)=(tikz@ball!70!black);
 color(20bp)=(black!70);
 color(30bp)=(black!70)}
\makeatother

\tikzset{viewport/.style 2 args={
    x={({cos(-#1)*1cm},{sin(-#1)*sin(#2)*1cm})},
    y={({-sin(-#1)*1cm},{cos(-#1)*sin(#2)*1cm})},
    z={(0,{cos(#2)*1cm})}
}}

\pgfplotsset{only foreground/.style={
    restrict expr to domain={rawx*\CameraX + rawy*\CameraY + rawz*\CameraZ}{-0.05:100},
}}
\pgfplotsset{only background/.style={
    restrict expr to domain={rawx*\CameraX + rawy*\CameraY + rawz*\CameraZ}{-100:0.05}
}}

\def\addFGBGplot[#1]#2;{
    \addplot3[#1,only foreground] #2;
}

\newcommand{\ViewAzimuth}{-20}
\newcommand{\ViewElevation}{30}
\centering
\vspace{-0.5cm}
\begin{tikzpicture}
    \pgfmathsetmacro{\CameraX}{sin(\ViewAzimuth)*cos(\ViewElevation)}
    \pgfmathsetmacro{\CameraY}{-cos(\ViewAzimuth)*cos(\ViewElevation)}
    \pgfmathsetmacro{\CameraZ}{sin(\ViewElevation)}
    \pgfmathsetmacro{\Radius}{2.5}
    \begin{scope}
        \clip (0,0) circle (\Radius*1cm);
        \begin{scope}[transform canvas={rotate=-20}]
            \shade [ball color=white] ({0,0.5*\Radius}) ellipse ({1.8*\Radius} and {1.5*\Radius});
        \end{scope}
    \end{scope}
    \begin{axis}[
        hide axis,
        view={\ViewAzimuth}{\ViewElevation},     
        every axis plot/.style={very thin},
        disabledatascaling,                      
        anchor=origin,                           
        viewport={\ViewAzimuth}{\ViewElevation}, 
    ]

        \addFGBGplot[domain=0:2*pi, samples=51, samples y=11,smooth,
        domain y=40:90,surf,shader=flat,color=blue!30,opacity=1] 
            ({\Radius*cos(deg(x))*cos(y)},
            {\Radius*sin(deg(x))*cos(y)}, {\Radius*sin(y)});
		\addFGBGplot[domain=0:2*pi, samples=100, samples y=1] ({\Radius*cos(deg(x))}, {\Radius*sin(deg(x))}, 0);
        \addFGBGplot[domain=0:2*pi, samples=100, samples y=1] (0, {\Radius*sin(deg(x))}, {\Radius*cos(deg(x))});
        \addFGBGplot[domain=0:2*pi, samples=100, samples y=1] ({\Radius*sin(deg(x))}, 0, {\Radius*cos(deg(x))});
        
        \pgfmathsetmacro{\Boundx}{-145}
        \pgfmathsetmacro{\Boundy}{40}
        
        \addFGBGplot[domain=0:2*pi, samples=100, color=blue!80!black, samples y=1] 
            ({\Radius*cos(deg(x))*cos(\Boundy)},
            {\Radius*sin(deg(x))*cos(\Boundy)}, {\Radius*sin(\Boundy)});
        \draw[color=red!50!black,dashed] (0,0,\Radius) --     node[right]{\tiny{\(r(C)\)}} ({\Radius*cos(\Boundx)*cos(\Boundy)},
            {\Radius*sin(\Boundx)*cos(\Boundy)}, {\Radius*sin(\Boundy)});
         \draw (0,0,\Radius) node[circle, fill=black,
                        inner sep=0pt, minimum width=4pt]{};
         \draw ({\Radius*cos(-45)*cos(75)},
            {\Radius*sin(-45)*cos(75)}, {\Radius*sin(75)}) node{\tiny{\(p(C)\)}};
         \draw[color=blue!80!black] ({\Radius*cos(-200)*cos(50)},
            {\Radius*sin(-200)*cos(50)}, {\Radius*sin(50)}) node{\tiny{\(C\)}};
         \draw[color=blue!80!black] ({\Radius*cos(-60)*cos(36)},
            {\Radius*sin(-60)*cos(36)}, {\Radius*sin(36)}) node{\tiny{\(k\)}};
\end{axis}
\end{tikzpicture}
\caption{A spherical cap on \(S^2\).}
\end{minipage}\hfill
\end{figure}%
\begin{remark}
If \(C\) is a spherical cap bounded by a circular line \(k\), there exists exactly one point \(p(C)\in C\) with equal Euclidean distance to any point in \(k\). We call \(p(C)\) the \emph{center} and \(r(C)\) the \emph{radius} of \(C\). Let us emphasize that \(r(C)\) is the Euclidean distance in \(\R^3\), not the geodesic distance on the sphere. The surface area of \(C\) is given by \(\pi\cdot r(C)^2\).
\end{remark}
	A circle packing in \(S^2\) is a familiy \(\mathcal{C}=(C_v)_{v\in V}\) of spherical caps in \(C_v\subset S^2\) over some finite index set \(V\). It is called univalent if the interiors of the caps in \(\mathcal{C}\) are mutually disjoint. Given a circle packing \(\mathcal{C}=(C_v)_{v\in V}\) in \(S^2\) we define its intersection graph as the simple graph \(G=(V,E)\) with edge set
	\[E=\big\{\{u,v\}~|~u,v\in V, u\neq v\text{ and } C_u\cap C_v\neq\emptyset\big\}.\]

The stereographic projection maps disks in the plane to spherical caps in the sphere and vice versa (see Section \ref{sec:CircPresMaps}). Therefore, the Koebe--Andreev--Thurston Theorem can be reformulated by means of circle packings in the sphere in the following way:
\begin{corollary}\label{cor:KoebeAndreevThurston}
	 A simple, finite graph is planar if and only if it is the intersection graph of a univalent circle packing in the unit sphere.
\end{corollary}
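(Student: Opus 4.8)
The plan is to transport the Koebe--Andreev--Thurston Theorem from the plane to the sphere by means of the stereographic projection, exploiting the circle preservation property recalled just above the statement. Fix a ``north pole'' \(N\in S^2\) and let \(\sigma\colon S^2\setminus\{N\}\to\R^2\) be the stereographic projection from \(N\); it is a homeomorphism which, by the property cited before the corollary (and established in Section~\ref{sec:CircPresMaps}), restricts to a bijection between the spherical caps not containing \(N\) and the closed disks in \(\R^2\), with \(\sigma^{-1}\) sending each closed disk to a spherical cap avoiding \(N\). Being homeomorphisms, \(\sigma\) and \(\sigma^{-1}\) preserve all incidence relations: two caps (resp.\ disks) are disjoint, meet in a single point, or have disjoint interiors if and only if their images do. The one convention to reconcile is that the Theorem above phrases adjacency as ``intersect in exactly one point'' while the intersection graph is defined through \(C_u\cap C_v\neq\emptyset\); but in a univalent packing the interiors are mutually disjoint, so whenever two caps (or two disks) meet at all they can only meet in a single boundary point. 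Hence the two conventions select the same edge set, and it suffices to match univalent planar packings with univalent spherical packings under \(\sigma\).

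\emph{Planar \(\Rightarrow\) spherical packing.} First I would assume \(G\) planar, so that the Koebe--Andreev--Thurston Theorem supplies a univalent circle packing \((D_v)_{v\in V}\) of \(G\) in the plane. Each \(D_v\) is a bounded closed disk, whence \(C_v:=\sigma^{-1}(D_v)\) is a spherical cap not containing \(N\). Since \(\sigma^{-1}\) is a homeomorphism, the interiors of the \(C_v\) stay mutually disjoint and \(C_u\cap C_v\neq\emptyset\) precisely when \(D_u\cap D_v\neq\emptyset\), i.e.\ precisely when \(u,v\) are adjacent. Thus \((C_v)_{v\in V}\) is a univalent circle packing in \(S^2\) with intersection graph \(G\).

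\emph{Spherical packing \(\Rightarrow\) planar.} Conversely, given a univalent circle packing \((C_v)_{v\in V}\) in \(S^2\) with intersection graph \(G\), if some \(N\in S^2\) lies in none of the caps then the \(\sigma(C_v)\) are closed disks forming a univalent planar packing with identical intersection relations, and the Koebe--Andreev--Thurston Theorem gives that \(G\) is planar. The main obstacle is exactly the existence of such an \(N\): it can fail only in the degenerate situation where the closed caps cover all of \(S^2\) (for example, two complementary hemispheres, which realize \(K_2\) while leaving no uncovered point). To settle this case I would argue intrinsically: place a vertex at the center \(p(C_v)\) of each cap and, for every edge \(\{u,v\}\), join \(p(C_u)\) to \(p(C_v)\) by an arc passing through the contact set \(C_u\cap C_v\) and remaining inside \(C_u\cup C_v\). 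Because the interiors of distinct caps are disjoint, arcs of different edges meet at most at a shared vertex, so this is an embedding of \(G\) into \(S^2\); since the graphs embeddable in \(S^2\) are exactly the planar graphs (again via stereographic projection from any point off the embedded \(1\)-complex, of which there are many as that complex has measure zero), \(G\) is planar. The only genuinely delicate point in the whole argument is therefore this choice of projection point, everything else being a direct transfer of the planar statement through the homeomorphism \(\sigma\).
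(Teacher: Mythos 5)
Your proof is correct and follows essentially the same route as the paper, which treats the corollary as an immediate transfer of the Koebe--Andreev--Thurston Theorem through the stereographic projection's exchange of planar disks and spherical caps; your added care about the choice of projection point, together with the fallback direct embedding into \(S^2\) when the closed caps cover the whole sphere, fills in precisely the degenerate case that the paper's one-line justification glosses over. One small slip: your assertion that in a univalent packing two caps ``can only meet in a single boundary point'' fails exactly for a pair of complementary caps sharing their entire boundary circle -- your own \(K_2\) example -- but since interiors disjoint rules this out as soon as \(|V|\geq 3\), and the nonempty-intersection convention still yields the correct edge set in that case, nothing in your argument actually breaks.
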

This reformulation of the Circle Packing Theorem is a strong tool in the construction of good separators for planar graphs (see \cite{Miller:Separators, Miller:FiniteElMeshes}). In our case we will use the circle packing representation to construct a test function in the Courant--Fischer--Theorem \eqref{eq:CombCourantFischer}. Note that the circle packing \(\mathcal C\) in Corollary \ref{cor:KoebeAndreevThurston} is certainly not unique, since any bijective and conformal map \(f:S^2\rightarrow S^2\) maps \(\mathcal C\) to a different circle packing with the same intersection graph \(G\). We will benefit from this non-uniqueness, as it enables us to adjust the circle packing to the vertex weight \(m\). The main task of this subsection will be the proof of the following technical lemma:
\begin{lemma}\label{lem:Deformation}
Let \(\mathcal{C}=(C_v)_{v\in V}\) be a univalent circle packing in \(S^2\) and let \ \(m:V\rightarrow (0,\infty)\) be a positive weight function on \(V\) with
	\begin{align}		
		2\left(m(u)+m(v)\right)<m(V) \text{ for all } u,v\in V\text{ with }C_u\cap C_v\neq\emptyset.\label{eq:TechAssumpDef}
	\end{align}
	Then there exists a homeomorphism \(f:S^2\rightarrow S^2\), that maps spherical caps to spherical caps, such that the (univalent) image circle packing \(\tilde{\mathcal{C}}=\left(f(C_v)\right)_{v\in V}\) satisfies
	\begin{align}
		\sum_{v\in V}m(v)p(f(C_v))=0.
	\end{align}\label{eq:def-m-admissible}
\end{lemma}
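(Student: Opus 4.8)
The plan is to reduce the statement to a topological degree argument for a weighted barycenter map parametrised by the open unit ball. First I would fix the family of conformal automorphisms of $S^2$ that will serve as the candidate maps $f$. Since stereographic projection carries circular lines to circles and lines in the plane and back, every Möbius transformation of $S^2$ is a homeomorphism that maps spherical caps to spherical caps; it therefore suffices to search for $f$ inside the $3$-parameter family of hyperbolic translations. Concretely, identifying $S^2$ with the ideal boundary of the hyperbolic ball $B^3$, I would let $f_a$, for $a\in B^3$, be the orientation-preserving isometry of $\mathbb H^3$ sending $0$ to $a$, restricted to the boundary. This family is continuous in $a$, satisfies $f_0=\mathrm{id}$, and as $a\to\xi\in S^2$ the maps $f_a$ degenerate to the constant $\xi$ locally uniformly on $S^2\setminus\{-\xi\}$, with attracting fixed point $\xi$ and repelling fixed point $-\xi$.

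Second, I would introduce the weighted barycenter
\[
\Psi\colon B^3\to\overline{B^3},\qquad \Psi(a)=\frac{1}{m(V)}\sum_{v\in V}m(v)\,p\big(f_a(C_v)\big),
\]
which is continuous because the centre $p(\cdot)$ of a spherical cap depends continuously on the cap and hence on $a$, and which takes values in the closed ball, being a convex combination of the points $p(f_a(C_v))\in S^2$. The assertion of the lemma is exactly that $\Psi(a)=0$ for some $a\in B^3$, with $f:=f_a$. To produce such a zero I would invoke the standard fact that a continuous map of a closed ball whose restriction to the boundary points strictly outward, $\langle \Psi(a),a\rangle>0$, must vanish in the interior; applied on $\{|a|\le\rho\}$ this reduces everything to exhibiting a radius $\rho<1$ with $\langle \Psi(a),a\rangle>0$ whenever $|a|=\rho$.

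The third and decisive step is the boundary analysis establishing this outward-pointing behaviour, and this is where the hypothesis \eqref{eq:TechAssumpDef} enters. Fixing $\xi\in S^2$ and letting $a\to\xi$ radially, a cap $C_v$ whose closure does not contain the repelling point $-\xi$ is pushed into a shrinking neighbourhood of $\xi$, so $\langle p(f_a(C_v)),\xi\rangle\to 1$. The only caps that may fail to concentrate at $\xi$ are those whose closure contains $-\xi$, and for these I can only use the trivial bound $\langle p(f_a(C_v)),\xi\rangle\ge -1$. The key geometric observation is that, because the packing is univalent and adjacent caps meet in a single point (Corollary~\ref{cor:KoebeAndreevThurston}), the point $-\xi$ lies in the closure of at most two caps, and if it lies in two then these form a tangent, hence adjacent, pair $C_{u_0},C_{v_0}$; three caps cannot share a common boundary point with pairwise disjoint interiors. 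Splitting the sum accordingly gives
\[
\liminf_{a\to\xi}\ \langle \Psi(a),\xi\rangle\ \ge\ \frac{\big(m(V)-m(u_0)-m(v_0)\big)-\big(m(u_0)+m(v_0)\big)}{m(V)}\ =\ \frac{m(V)-2\big(m(u_0)+m(v_0)\big)}{m(V)},
\]
which is strictly positive precisely by \eqref{eq:TechAssumpDef}; the cases where $-\xi$ meets a single cap or no cap at all only improve the bound, using $2m(v_0)<m(V)$ together with the connectedness of $G$. A compactness argument over $\xi\in S^2$ — arguing by contradiction along a sequence $a_n$ with $|a_n|\to1$ and $a_n/|a_n|\to\xi$ — then upgrades this pointwise $\liminf$ estimate to the uniform positivity on a sphere $\{|a|=\rho\}$ required in the second step.

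The main obstacle I anticipate is exactly this boundary analysis: controlling the at most two caps whose closure contains the repelling point $-\xi$, and recognising that the relevant quantity to bound is $m(u_0)+m(v_0)$ over an adjacent pair. This is what forces the hypothesis to be the \emph{pairwise} condition \eqref{eq:TechAssumpDef} on adjacent vertices rather than a pointwise bound $2m(v)<m(V)$, the worst configuration being a direction $\xi$ whose antipode $-\xi$ is the tangency point of two adjacent caps. The remaining ingredients — continuity and the cap-preserving property of $f_a$, the convex-combination bound, and the degree-theoretic existence of a zero — are routine.
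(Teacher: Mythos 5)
Your proposal is correct, and its skeleton coincides with the paper's: the same three-parameter family of cap-preserving M\"obius deformations parametrised by the ball (the paper realises them as $f_\alpha=\pi_\beta^{-1}\circ D_\beta^{1-|\alpha|}\circ\pi_\beta$ with $\beta=\alpha/|\alpha|$, which are exactly your hyperbolic translations up to reparametrisation), the same weighted-barycenter map, a Brouwer-type argument for a zero, and the same boundary analysis in which all caps concentrate at $\alpha/|\alpha|$ except those meeting the repelling point, which form a set in $\mathcal V_{\mathcal C}$ (a single cap or a tangent -- hence intersecting -- pair), so that hypothesis \eqref{eq:TechAssumpDef} yields strict positivity. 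The one genuine difference is how the discontinuity of $\alpha\mapsto p(f_\alpha(C_v))$ at boundary points $-\alpha\in C_v$ is handled. The paper extends $\Phi$ continuously to the \emph{closed} ball by inserting cutoff weights $w_v(\alpha)$ vanishing exactly at the discontinuity set, applies its ray lemma (Lemma~\ref{lem:fixpoint}) there, and then must argue a posteriori -- via Lemma~\ref{lem:CapsCloseToBoundary} and the explicit choice $\varepsilon=\min_{U\in\mathcal V_{\mathcal C}}\frac{m(V)-2m(U)}{2m(V)}$ -- that the zero satisfies $|\alpha|\leq 1-\delta$, where all $w_v\equiv 1$, so that the smoothed zero is a genuine zero of the unsmoothed barycenter. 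You instead never leave the open ball: you work on $\{|a|\le\rho\}$ with $\rho<1$ and use the outward-pointing condition $\langle\Psi(a),a\rangle>0$ on $\{|a|=\rho\}$ (Poincar\'e--Bohl, which is the same Brouwer consequence), obtaining uniformity of the boundary estimate by a compactness/contradiction argument along sequences $a_n$ with $|a_n|\to1$, $a_n/|a_n|\to\xi$. That contradiction argument is in substance the paper's Lemma~\ref{lem:CapsCloseToBoundary} (finitely many neighbourhoods $U_{\beta_i}$ covering $S^2$ plus the locally uniform degeneration of Lemma~\ref{lem:UniformConv}), so you must still prove it, but your route buys you the elimination of the cutoff functions and of the two-step ``smoothed zero is an honest zero'' argument, at the cost of a soft rather than explicit treatment of the boundary. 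Two small points to make precise when writing it up: fix the \emph{canonical} translation along the geodesic from $0$ to $a$ (the isometry with $f_a(0)=a$ is not unique), and note -- as you implicitly do -- that two caps whose closures both contain $-\xi$ satisfy $C_{u_0}\cap C_{v_0}\neq\emptyset$ outright, so \eqref{eq:TechAssumpDef} applies to that pair directly, while the single-cap case follows from \eqref{eq:TechAssumpDef} with $u=v$.
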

\subsubsection{Circle-preserving maps and the proof of Lemma \ref{lem:Deformation}}\label{sec:CircPresMaps}
	The aim of this section is to prove Lemma \ref{lem:Deformation}. We begin by indroducing some notation: for \(\beta\in S^2\) let \(H_\beta\) denote the affine hyperplane in \(\R^3\) tangential to the unit sphere \(S^2\) at the point \(\beta\), that is
	\[H_\beta:=\{y\in\R^3~|~ \langle y-\beta,\beta\rangle=0\}.\]
Furthermore, let \({\pi_\beta}:{H_\beta}\rightarrow{S^{2}\setminus\{-\beta\}}\) be the stereographic projektion of \(H_\beta\) onto \(S^{2}\), that is
	\begin{align*}
		\pi_\beta(y) & = \frac{4}{|\beta+y|^2}(\beta+y)-\beta
	\intertext{for \(y\in H_\beta\) and}
		\pi_\beta^{-1}(z) & = \frac{1}{1+\langle z,\beta\rangle}(\beta+z)-\beta
	\end{align*}
	for \(z\in S^2\setminus\{-\beta\}\). Here \(\langle\cdot,\cdot\rangle\) denotes the Euclidean scalar product on \(\R^3\). With the usual convention \(\pi_\beta(\infty)=-\beta\) the projection \(\pi_\beta\) extends to a homeomorphism \({\pi_\beta}:{H_\beta\cup\{\infty\}}\rightarrow{S^{2}}\). The stereographic projection is circle-preserving in the following sense:
\begin{itemize}
	 \item[(1a)] If \(k\) is a circular line in \(H_\beta\), then \(\pi_\beta(k)\) is a circular line in \(S^2\).
	 \item[(1b)] If \(g\) is a straight line in \(H_\beta\), then \(\pi_\beta(g)\cup\{-\beta\}\) is a circular line in \(S^2\).
	 \item[(2)] If \(k'\) is a circular line in \(S^2\), then either \(\pi_\beta(k')\) is a circular line in \(S^2\), if \(-\beta\notin k'\), or \(\pi_\beta(k')\setminus\{\infty\}\) straight line in \(H_\beta\), if  \(-\beta\in k'\).
\end{itemize}	
	A geometric proof of these facts can be found in \cite[\S 36]{HilbertCohnvossen:GeoIm}. Additionally, for \(\lambda>0\) we consider the dilation \(D_\beta^\lambda\) on \(H_\beta\) with centre \(\beta\) und factor \(\lambda\), i.e.\
		\[D_\beta^\lambda(y)=\beta+\lambda(y-\beta)\]
	for \(y\in H_\beta\). Again we extend \(D_\beta^\lambda\) to a homeomorphism \({D_\beta^\lambda}:{H_\beta\cup\{\infty\}}\rightarrow{H_\beta\cup\{\infty\}}\) via \(D_\beta^\lambda(\infty)=\infty\), that maps straight lines to straight lines and circular lines to circular lines. For arbitrary \(\lambda>0\) and \(\beta\in S^2\) we set
	\[{g_\beta^\lambda:=\pi_\beta^{-1}\circ D_\beta^\lambda\circ\pi_\beta}:{S^2}\rightarrow{S^2}.\] Then \(g_\beta^\lambda\) is a homeomorphism, that maps circular lines to circular lines. Moreover, the map
	\begin{align*}
		(0,\infty)\times S^2\times S^2 & \rightarrow S^2\\
		(\lambda,\beta,z) & \mapsto g_\beta^\lambda(z)
	\end{align*}
is continuous. Next, we shall study the behavior of \(g_\beta^\lambda\) as \(\lambda\rightarrow 0\):
\begin{itemize}
\item For \(\beta, z\in S^2\) with \(z\neq -\beta\) and for sequences \((\beta_n)_n\subset S^2, (z_n)_n\subset S^2\) and \((\lambda_n)_n\subset (0,\infty)\) with
	\[(\beta_n,\lambda_n,z_n)\rightarrow (\beta,0,z),\]
we have
	\[g_{\beta_n}^{\lambda_n}(z_n)\rightarrow \beta\]
as \(n\rightarrow\infty\).
\item Contrariwise, for \(z=-\beta\) we have \(g_\beta^\lambda(z)=-\beta\) for all \(\lambda>0\).
\end{itemize}
Therefore we define the limit map \({g_\beta^0}:{S^2}\rightarrow{S^2}\) for \(\beta\in S^2\) by
\begin{align*}
	g_\beta^0(z)=
	\begin{cases}
		-\beta, & \text{if } z=-\beta\\
		\beta, & \text{else}
	\end{cases}
\end{align*}
for \(z\in S^2\). With this extension the map
	\(
		[0,\infty)\times S^2\times S^2  \rightarrow S^2,~
		(\lambda,\beta,z)  \mapsto g_\beta^\lambda(z)
	\)
is continuous on the relatively open subset \([0,\infty)\times S^2\times S^2\setminus\{(0,\beta,-\beta)~|~\beta\in S^2\}\). An immediate conclusion of this continuity is the following
\begin{lemma}\label{lem:UniformConv}
	Let \(K,L\subset S^2\) be compact subsets, such that \(z\neq -\beta\) holds for all \((\beta,z)\in K\times L\), and let \(\varepsilon>0\). Then there exists a \(\delta\in (0,1)\) with
		\[|g_\beta^\lambda(z)-\beta|<\varepsilon\]
	for all \((\lambda,\beta,z)\in [0,\delta]\times K\times L\).
\end{lemma}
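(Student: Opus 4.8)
The plan is to recast the statement as a uniform-convergence fact and then extract uniformity from compactness together with the joint continuity of $(\lambda,\beta,z)\mapsto g_\beta^\lambda(z)$ established just before the lemma. Concretely, I would introduce the auxiliary function
\[
F(\lambda,\beta,z):=|g_\beta^\lambda(z)-\beta|,
\]
defined on the relatively open set $\Omega:=\big([0,\infty)\times S^2\times S^2\big)\setminus\{(0,\beta,-\beta)\mid\beta\in S^2\}$. Since the norm is continuous and $(\lambda,\beta,z)\mapsto g_\beta^\lambda(z)$ is continuous on $\Omega$ by the discussion preceding the lemma, $F$ is continuous on $\Omega$.

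The first observation is that the hypothesis on $K$ and $L$ places the ``slice at $\lambda=0$'' inside $\Omega$ and forces $F$ to vanish there. Indeed, because $z\neq-\beta$ for every $(\beta,z)\in K\times L$, the compact set $\{0\}\times K\times L$ is contained in $\Omega$, and by the very definition of the limit map $g_\beta^0$ we have $F(0,\beta,z)=|g_\beta^0(z)-\beta|=|\beta-\beta|=0$ for all such $(\beta,z)$.

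From here I would argue by contradiction to produce the required $\delta$. Suppose no $\delta\in(0,1)$ works; then for every $n\in\N$ there are $\lambda_n\in(0,1/n]$, $\beta_n\in K$ and $z_n\in L$ with $F(\lambda_n,\beta_n,z_n)\geq\varepsilon$. Using compactness of $K$ and $L$, I pass to subsequences along which $\beta_n\to\beta_\infty\in K$ and $z_n\to z_\infty\in L$; together with $\lambda_n\to 0$ this gives $(\lambda_n,\beta_n,z_n)\to(0,\beta_\infty,z_\infty)$. Crucially, $z_\infty\neq-\beta_\infty$ because $(\beta_\infty,z_\infty)\in K\times L$, so the limit point lies in $\Omega$; continuity of $F$ then yields $F(\lambda_n,\beta_n,z_n)\to F(0,\beta_\infty,z_\infty)=0$, contradicting $F(\lambda_n,\beta_n,z_n)\geq\varepsilon$. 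This forces the existence of some $\delta_0>0$ with $F<\varepsilon$ on $(0,\delta_0]\times K\times L$; taking $\delta:=\min\{\delta_0,1/2\}$ and recalling that $F\equiv 0$ on the $\lambda=0$ slice yields the claim on all of $[0,\delta]\times K\times L$.

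The only genuine obstacle is ensuring that the limit point of the contradiction sequence does not escape into the singular set $\{(0,\beta,-\beta)\}$ on which $F$ is discontinuous; this is exactly where the separation hypothesis $z\neq-\beta$ on $K\times L$ together with the compactness (hence closedness) of $K$ and $L$ enter, since they guarantee $(\beta_\infty,z_\infty)\in K\times L$ and therefore $z_\infty\neq-\beta_\infty$. Everything else is the standard compactness extraction, and the cosmetic requirement $\delta<1$ is absorbed into the final $\min$.
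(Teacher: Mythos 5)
Your proof is correct and follows exactly the route the paper intends: the lemma is stated there without proof as an ``immediate conclusion'' of the joint continuity of \((\lambda,\beta,z)\mapsto g_\beta^\lambda(z)\) on \(\bigl([0,\infty)\times S^2\times S^2\bigr)\setminus\{(0,\beta,-\beta)\mid\beta\in S^2\}\), and your argument simply makes this explicit via the standard compactness extraction, correctly identifying that the hypothesis \(z\neq-\beta\) on the closed set \(K\times L\) keeps the limit point of the contradiction sequence away from the singular set. No gaps; the handling of the \(\lambda=0\) slice via \(g_\beta^0(z)=\beta\) is exactly what is needed.
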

As we are primarily interested in circle packings in \(S^2\), our next step will be to study the behavior of \(g_\beta^\lambda\) on a fixed circular cap \(C\subset S^2\). First, note that \(g_\beta^\lambda(C)\) is a circular cap for \(\lambda>0\), since \(g_\beta^\lambda\) is a circle-preserving homeomorphism, and therefore the centre \(p\left(g_\beta^\lambda(C)\right)\) is well-defined. Because of the continuous dependence of \(g_\beta^\lambda\) on \((\lambda,\beta)\in (0,\infty)\times S^2\) the centre \(p\left(g_\beta^\lambda(C)\right)\) also depends continuously on \((\lambda,\beta)\). Moreover, applying Lemma \ref{lem:UniformConv} to \(K=C\) and \(L=\{\beta\}\) yields
	\(p\left(g_\beta^\lambda(C)\right)\rightarrow \beta\)
as \(\lambda\rightarrow 0\) for \(\beta\in S^2\) with \(-\beta\notin C\). That is why we define
\begin{align*}
	p\left(g_\beta^0(C)\right):=\begin{cases}
		\beta, & \text{if } -\beta\notin C,\\
		-\beta, & \text{if } -\beta\in C.
	\end{cases}
\end{align*}
With this convention the map
\(
	[0,\infty)\times S^2  \rightarrow S^2,~
	(\lambda,\beta) \mapsto p\left(g_\beta^\lambda(C)\right)
\)
is continuous on the set \([0,\infty)\times S^2\setminus (\{0\}\times -C)\). Let us finally consider the maps
	\[f_\alpha=\begin{cases}
		g_{{\alpha}/{|\alpha|}}^{1-|\alpha|}, & \text{if } \alpha\neq 0,\\
		\Id_{S^2}, & \text{if } \alpha=0.
	\end{cases}\]
	for \(\alpha\in B^3\). The corresponding result for \(g_\beta^\lambda\) yields that \(\alpha\mapsto p\left(f_\alpha(C)\right)\) is continuous on \(B^3\setminus (-C)\). Also note that
\begin{align}\label{eq:CapCentreAtBound}
	p\left(f_\alpha(C)\right)=\begin{cases}
		\alpha, & \text{if } -\alpha\notin C,\\
		-\alpha, & \text{if } -\alpha\in C
	\end{cases}
\end{align}
for \(\alpha\in S^2\). The main tool of the proof of Lemma \ref{lem:Deformation} is the following conclusion of the fixed-point theorem of Brouwer.
\begin{lemma}\label{lem:fixpoint}
	Let \({\Phi}:{B^3}\rightarrow{\R^3}\) be a continuous map and assume that for any \(\alpha\in S^2\) the image \(\Phi(\alpha)\) lies on the ray initiating at the origin and passing through \(\alpha\). Then there exists some \(\hat\alpha\in B^3\) with \(\Phi(\hat\alpha)=0\).
\end{lemma}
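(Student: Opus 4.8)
The plan is to argue by contradiction and to reduce the claim directly to Brouwer's fixed point theorem for the closed ball $B^3$. Suppose, towards a contradiction, that $\Phi(x)\neq 0$ for every $x\in B^3$. Then, since $\Phi$ is continuous and nonvanishing, the map
\[
F\colon B^3\to B^3,\qquad F(x)=-\frac{\Phi(x)}{|\Phi(x)|},
\]
is well defined and continuous, and its image is contained in the boundary sphere $S^2\subset B^3$. By Brouwer's theorem, $F$ admits a fixed point $\hat\alpha=F(\hat\alpha)$.

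First I would observe that, because $|F(x)|=1$ for all $x$, the fixed point automatically satisfies $\hat\alpha\in S^2$. This is the crucial point: it forces $\hat\alpha$ to lie on the boundary, exactly where the hypothesis on $\Phi$ is available. By assumption $\Phi(\hat\alpha)$ lies on the ray from the origin through $\hat\alpha$, so $\Phi(\hat\alpha)=t\,\hat\alpha$ for some $t\geq 0$, and since $\Phi$ does not vanish in fact $t>0$.

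Then I would simply compute $F(\hat\alpha)$ using $|\hat\alpha|=1$, which gives $F(\hat\alpha)=-t\hat\alpha/(t|\hat\alpha|)=-\hat\alpha$. Combined with $\hat\alpha=F(\hat\alpha)$ this yields $\hat\alpha=-\hat\alpha$, hence $\hat\alpha=0$, contradicting $\hat\alpha\in S^2$. Therefore the assumption was false and $\Phi$ must possess a zero in $B^3$.

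There is essentially no hard step here; the only thing to get right is the choice of the auxiliary map $F$. The minus sign ensures that on $S^2$ the map $F$ acts as the antipodal map $\alpha\mapsto-\alpha$, which has no fixed point, while the normalisation by $|\Phi|$ guarantees that any Brouwer fixed point of $F$ lands on $S^2$ in the first place; these two features together produce the contradiction. (Equivalently, if one prefers a homotopy-theoretic phrasing, $-F$ would be a continuous retraction of $B^3$ onto $S^2$, which cannot exist, but the direct fixed-point argument above avoids any degree theory.)
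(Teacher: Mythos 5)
Your proof is correct and is essentially identical to the paper's own argument: both define the auxiliary map \(-\Phi/|\Phi|\) on \(B^3\) under the assumption that \(\Phi\) has no zero and derive a contradiction with Brouwer's fixed-point theorem. You merely spell out the detail the paper leaves implicit, namely that any fixed point would have to lie on \(S^2\), where the ray hypothesis forces \(F(\hat\alpha)=-\hat\alpha\).
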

\begin{proof}
	If there were no such \(\hat\alpha\), the map \({-\frac{\Phi}{|\Phi|}}:{B^3}\rightarrow{B^3}\) would be well-defined and continuous, but because of our assumptions on \(\Phi\) it would not have any fixed point in \(B^3\) -- a contradiction to the fixed-point theorem of Brouwer.
\end{proof}
	Given a weight function \(m:V\rightarrow (0,\infty)\) and a univalent circle packing \(\mathcal{C}=(C_v)_{v\in V}\) we would like to apply this Lemma to the map \(\Phi\) given by
	\begin{align}\label{eq:DiscontFunction}
		\Phi(\alpha)=\sum_{v\in V}m(v)p(f_\alpha(C_v)), \quad \alpha\in B^3,
	\end{align}
	and, in fact, one easily checks that \(\Phi\) satisfies the ray condition required in the previous lemma provided that~\eqref{eq:TechAssumpDef} holds; but unfortunately \(\Phi\) will be discontinuous on \(-C_v\) for \(v\in V\). This is why we will smoothen \(\Phi\) close to the caps \(-C_v\) without changing \(\Phi\) too much in the interior of \(B^3\). The following lemma ensures that we have some kind of control of the behavior of the caps \(f_\alpha(C_v)\) for \(\alpha\) near \(S^2\). Roughly speaking, it states that 'most'  of the caps converge in some sense uniformly to \(\alpha/|\alpha|\) as \(\alpha\) gets closer to the boundary of \(B^3\). To classify what we mean by 'most' we shall define the system
		\[\mathcal{V}_\mathcal{C}:=\left\{U\subset V~|~\bigcap_{v\in U}C_v\neq\emptyset\right\}\]
Note that, since the circle packing is univalent, \(\mathcal{V}_{\mathcal{C}}\) only consist of the one element subsets of \(V\) and the two element subsets \(\{u,v\}\subset V\) with \(C_u\cap C_v\neq\emptyset\). In particular, \eqref{eq:TechAssumpDef} is equivalent to
	\begin{align}
		2\cdot m(U)<m(V) \text{ for all } U\in\mathcal{V}_\mathcal{C}.
	\end{align}
	\begin{lemma}\label{lem:CapsCloseToBoundary}
		Let \(\varepsilon\in(0,1)\). Then we find some \(\delta\in(0,1)\), so that for any \(\alpha\in B^3\) with \(1-\delta<|\alpha|<1\) we have
			\[V_\alpha^2:=V\setminus V_\alpha^1\in \mathcal{V}_\mathcal{C},\]
		where
			\[V_\alpha^1 :=\{v\in V~|~f_\alpha(C_v)\subset B_\varepsilon(\alpha/|\alpha|)\}.\]
	\end{lemma}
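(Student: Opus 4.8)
The plan is to argue by contradiction, turning the statement into a sequential compactness argument that exploits the joint continuity of $(\lambda,\beta,z)\mapsto g_\beta^\lambda(z)$ established just before the lemma. First I would unwind the definitions: a vertex $v$ belongs to $V_\alpha^2$ precisely when $f_\alpha(C_v)=g_{\alpha/|\alpha|}^{1-|\alpha|}(C_v)$ is \emph{not} contained in $B_\varepsilon(\alpha/|\alpha|)$, i.e.\ when some point of $C_v$ has image at Euclidean distance at least $\varepsilon$ from the pole $\alpha/|\alpha|$. Recalling that $\mathcal V_\mathcal C=\{U\subset V\mid\bigcap_{v\in U}C_v\neq\emptyset\}$, proving $V_\alpha^2\in\mathcal V_\mathcal C$ amounts to exhibiting a single point common to all caps $C_v$ with $v\in V_\alpha^2$.

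Suppose the conclusion fails for some $\varepsilon\in(0,1)$. Then there is a sequence $(\alpha_n)\subset B^3$ with $|\alpha_n|\to 1$, $|\alpha_n|<1$, and $V_{\alpha_n}^2\notin\mathcal V_\mathcal C$ for every $n$. Set $\beta_n:=\alpha_n/|\alpha_n|$ and $\lambda_n:=1-|\alpha_n|\to 0$, so that $f_{\alpha_n}=g_{\beta_n}^{\lambda_n}$ for $n$ large. Since $V$ is finite I may pass to a subsequence along which $V_{\alpha_n}^2=W$ is a fixed set with $W\notin\mathcal V_\mathcal C$, and since $S^2$ is compact I may further assume $\beta_n\to\beta\in S^2$. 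The heart of the argument is then to show $-\beta\in C_v$ for every $v\in W$. Fix such a $v$; by the reformulation above there is $z_n^v\in C_v$ with $|g_{\beta_n}^{\lambda_n}(z_n^v)-\beta_n|\geq\varepsilon$, and by compactness of the closed cap $C_v$ I may assume $z_n^v\to z^v\in C_v$ (a finite diagonal extraction over $v\in W$). If $z^v\neq-\beta$, then $(0,\beta,z^v)$ lies in the relatively open set on which $(\lambda,\beta,z)\mapsto g_\beta^\lambda(z)$ is continuous, so $g_{\beta_n}^{\lambda_n}(z_n^v)\to g_\beta^0(z^v)=\beta$; together with $\beta_n\to\beta$ this forces $|g_{\beta_n}^{\lambda_n}(z_n^v)-\beta_n|\to 0$, contradicting the bound $\geq\varepsilon$. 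Hence $z^v=-\beta$, and as $C_v$ is closed, $-\beta\in C_v$.

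Since this holds for every $v\in W$, the point $-\beta$ lies in $\bigcap_{v\in W}C_v$, so $W\in\mathcal V_\mathcal C$ by definition, contradicting $W\notin\mathcal V_\mathcal C$ and completing the proof. The step I expect to be the main obstacle is precisely that the pole $\beta_n$ \emph{moves} along the sequence: one cannot simply invoke the fixed-pole convergence of Lemma~\ref{lem:UniformConv}, and must instead use the joint continuity of $g_\beta^\lambda(z)$ at the boundary triples $(0,\beta,z)$ with $z\neq-\beta$, which is exactly why passing to a convergent subsequence $\beta_n\to\beta$ is essential. A secondary point requiring care is the bookkeeping of the (finitely many) subsequence extractions, one for $W$, one for $\beta_n$, and one for each $z_n^v$; all are harmless because both $W$ and $V$ are finite. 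Note also that the argument never needs the finer geometric fact that $\mathcal V_\mathcal C$ consists only of singletons and adjacent pairs: producing the common point $-\beta$ already places $W$ in $\mathcal V_\mathcal C$ directly from the definition.
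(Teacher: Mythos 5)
Your proof is correct, and it reaches the same geometric conclusion as the paper by a genuinely different route. The paper argues directly: for each \(\beta\in S^2\) it collects the caps avoiding \(-\beta\) into a compact set \(L_\beta\), chooses a neighbourhood \(U_\beta\) of \(\beta\) with \((-\overline{U_\beta})\cap L_\beta=\emptyset\), applies the uniform convergence result Lemma~\ref{lem:UniformConv} with \(K=\overline{U_\beta}\) and \(L=L_\beta\) to obtain a \(\delta_\beta\), and then extracts a finite subcover of \(S^2\) to get one global \(\delta=\min_i\delta_{\beta_i}\); membership \(V_\alpha^2\in\mathcal V_\mathcal C\) follows because \(V_\alpha^2\subset V\setminus W_{\beta_i}\), whose caps all contain the common point \(-\beta_i\). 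You instead negate the statement and run a sequential compactness argument: after extracting subsequences (a constant bad set \(W\), poles \(\beta_n\to\beta\), witnesses \(z_n^v\to z^v\in C_v\)), the joint continuity of \((\lambda,\beta,z)\mapsto g_\beta^\lambda(z)\) off \(\{(0,\beta,-\beta)\mid\beta\in S^2\}\) forces \(z^v=-\beta\), so \(-\beta\in\bigcap_{v\in W}C_v\) and \(W\in\mathcal V_\mathcal C\), a contradiction. The underlying mechanism is identical in both proofs --- caps not absorbed into \(B_\varepsilon(\alpha/|\alpha|)\) must contain the antipode, hence share a point --- but you bypass Lemma~\ref{lem:UniformConv} entirely, in effect re-deriving its content in sequential form, and you correctly identify the moving pole \(\beta_n\) as the reason the fixed-pole convergence alone does not suffice. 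What the paper's covering argument buys is a (semi-)constructive \(\delta\) built from finitely many \(\delta_{\beta_i}\), plus reuse of an already-proved lemma; what yours buys is brevity and independence from that lemma, at the cost of being non-constructive. One micro-remark: both arguments silently use that \(\mathcal V_\mathcal C\) is downward closed or, in your version, the convention for \(W=\emptyset\) (empty intersection), and, as you note, neither needs the finer classification of \(\mathcal V_\mathcal C\) into singletons and adjacent pairs.
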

	
	\begin{proof}
	For arbitrary \(\beta\in S^2\) let \(W_\beta\subset V\) be the set of \(v\in V\) with \(-\beta\notin C_v\) and let \(L_\beta\) be the union of all the caps \(C_v\) with \(v\in W_\beta\). Note, that \(V\setminus W_\beta\in \mathcal{V}_\mathcal{C}\).  Since \(L_\beta\) is compact we find an open neighbourhood \(U_\beta\subset S^2\) of \(\beta\in U_\beta\) with \((-\overline{U_\beta})\cap L_\beta=\emptyset\). We Lemma \ref{lem:UniformConv} to \(L_\beta\) and \(K_\beta=\overline{U_\beta}\) to find some \(\delta_\beta\in (0,1)\), such that \(g_\gamma^\lambda(L_\beta)\subset B_\varepsilon(\gamma)\) for all \(\lambda\in (0,\delta_\beta)\) and all \(\gamma\in U_\beta\). In particular, \(f_\alpha(C_v)\subset B_\varepsilon(\alpha/|\alpha|)\) holds for \(v\in W_\beta\) and for all \(\alpha\in\R^3\) with \(1-\delta_\beta<|\alpha|<1\) and \(\alpha/|\alpha|\in U_\beta\).
	
	Using the compactness of \(S^2\) we find finitely many \(\beta_i\in S^2\), so that \(S^2\) is the union of the associated neighbourhoods \(U_{\beta_i}\). We set \(\delta=\min_i \delta_i\). Now, for arbitrary \(\alpha\in \R^3\) with \(1-\delta<|\alpha|<1\) there is some \(\beta_i\) with \(\alpha/|\alpha|\in U_{\beta_i}\). By construction
		\[W_{\beta_i}\subset \{v\in V~|~f_\alpha(C_v)\subset B_\varepsilon(\alpha/|\alpha|)\}=V_\alpha^1\]
	holds. Since \(V_\alpha^2=V\setminus V_\alpha^1\) is a subset \(V\setminus W_{\beta_i}\in\mathcal{V}_\mathcal{C}\), the set \(V_\alpha^2\) is also in the system \(\mathcal{V}_\mathcal{C}\).
\end{proof}
	\begin{proof}[Proof of Lemma \ref{lem:Deformation}]
	Let \((C_v)_{v\in V}\) be a circle packing and \(m:V\rightarrow (0,\infty)\) be a positve function. We assume that (\ref{eq:TechAssumpDef}) holds. As mentioned before, the main strategy of the proof is to insert a term in (\ref{eq:DiscontFunction}), that properly smoothens \(\Phi\) close to the boundary of \(B^3\), and then apply Lemma \ref{lem:fixpoint} to this new function.
	
	For (a yet to be chosen) \(\varepsilon\in(0,1)\) choose \(\delta\in (0,1)\) as in Lemma \ref{lem:CapsCloseToBoundary}. For \(v\in V\) we consider the maximum distance function given by
		\[\dist(\alpha,C_v)=\max_{z\in C_v}|\alpha-z| \text{ for } \alpha\in B^3.\]
	Moreover, we define the continuous function \({w_v}:{B^3}\rightarrow{[0,1]}\) given by
		\[w_v(\alpha)=\begin{cases}
			\frac{2-\dist(\alpha,C_v)}{\delta}, & \text{if } \dist(\alpha,C_v)\geq 2-\delta,\\
			1, & \text{else}.\end{cases}\]
	Obviously the distance function can be bounded using
	\begin{align}\label{eq:EstimateDistance}
		\dist(\alpha,C_v)\leq 1+|\alpha|
	\end{align}
	for any \(\alpha\in B^3\setminus\{0\}\) and, more precisely, equality holds in (\ref{eq:EstimateDistance}), if and only if \(-\alpha/|\alpha|\) is in \(C_v\). Thus \(w_v(\alpha)=0\) for \(\alpha\in B^3\), if and only if   \(-\alpha\in C_v\). In particular \(w_v\) vanishes in the discontinuity points of the bounded map \(\alpha\mapsto p\left(f_\alpha(C_v)\right)\). Therefore, the assignment
		\[\alpha\mapsto w_v(\alpha)p\left(f_\alpha(C_v)\right), ~\alpha\in B^3\]
	defines a continuous map on \(B^3\) for any \(v\in V\) and the map \({\Phi}:{B^3}\rightarrow{\R^3}\) given by
		\[\Phi(\alpha)=\sum_{v\in V}m(v)w_v(\alpha)p\left(f_\alpha(C_v)\right), ~\alpha\in B^3,\]
	is continuous. Using (\ref{eq:CapCentreAtBound}) we get
		\[\Phi(\alpha)=\sum_{\begin{subarray}{c}~v  \in V\\ -\alpha  \notin C_v\end{subarray}} m(v)w_v(\alpha)\alpha\]
	 for all \(\alpha\in S^2\), so \(\Phi(\alpha)\neq 0\) is on the ray starting in \(0\) and passing through \(\alpha\). Applying Lemma \ref{lem:fixpoint} we conclude that there exists some \(\alpha\in \R^3\) with \(|\alpha|<1\) and 
	 	\[0=\Phi(\alpha)=\sum_{v\in V}m(v)w_v(\alpha)p\left(f_{\alpha}(C_v)\right).\]
	 Now, we shall prove that \(\alpha\) satisfies \(|\alpha|\leq 1-\delta\) if we choose \(\varepsilon\) sufficiently small since -- if this holds -- we obtain
	\[\dist(\alpha,C_v)\leq 1+|\alpha|<2-\delta\]
and thus \(w_v(\alpha)=1\) for all \(v\in V\). Using (\ref{eq:EstimateDistance}) this would imply
	\[0=\sum_{v\in V}m(v)p\left(f_\alpha(C_v)\right),\]
so choosing \(f=f_\alpha\) would prove Theorem \ref{lem:Deformation}.

Let us contrarily assume that \(1-\delta<|\alpha|<1\) holds. Recall that \(V_\alpha^2=V\setminus V_\alpha^2\in V_\mathcal{C}\) by the choice of \(\delta\) in Lemma \ref{lem:CapsCloseToBoundary}, where 
\begin{align*}
	V_\alpha^1=\{v\in V~|~f_\alpha(C_v)\subset B_\varepsilon\left(\alpha/|\alpha|\right)\}.
\end{align*}
By definition of the  sets \(V_\alpha^1\) and \(V_\alpha^2\) we have \(\dist(\alpha,f_\alpha(C_v))\leq \dist(\alpha,f_\alpha(C_u)) \text{ for } v\in V_\alpha^1 \text{ and } u\in V_\alpha^2.\) Now, it can be seen that \(\dist(\alpha,C_v)\leq \dist(\alpha,C_u)\). This follows from the fact that \(f_\alpha\) corresponds to a dilation on the hyperplane tangential to \(S^2\) at \(\alpha/|\alpha|\). We thus obtain \(w_v(\alpha)\geq w_u(\alpha)\) for \(v\in V_\alpha^1\) and \(u\in V_\alpha^2\). Setting \(C_\alpha:=\min_{v\in V_\alpha^1}w_v(\alpha)>0,\) we obtain \(w_v(\alpha)\geq C_\alpha\geq w_u(\alpha)\) for \(v\in V_\alpha^1\) and \(u\in V_\alpha^2\). Note that
	\[\left|\frac{\alpha}{|\alpha|}-p\left(f_\alpha(C_v)\right)\right|\leq\varepsilon\]
	holds for all \(v\in V_\alpha^1\), so we may estimate
\begin{align*}
	\Big|\sum_{v\in V_\alpha^1}m(v)w_v(\alpha)p\left(f_\alpha(C_v)\right)\Big| &
	\geq \sum_{v\in V_\alpha^1}m(v)w_v(\alpha) - \Big|\sum_{v\in V_\alpha^1}m(v)w_v(\alpha)\big(\frac{\alpha}{|\alpha|}-p\left(f_\alpha(C_v)\right)\big)\Big|\\
	& \geq (1-\varepsilon)\sum_{v\in V_\alpha^1}m(v)w_v(\alpha) \geq  C_\alpha(1-\varepsilon)\sum_{v\in V_\alpha^1}m(v)\\
	& = C_\alpha(1-\varepsilon)m\big(V_\alpha^1\big)
\end{align*}
and
\begin{align*}
	\Big|\sum_{v\in V_\alpha^2}m(v)w_v(\alpha)p\left(f_\alpha(C_v)\right)\Big| \leq C_\alpha\sum_{v\in V_\alpha^2}m(v)=C_\alpha m\big(V_\alpha^2\big).
\end{align*}
The previous two estimates yield
\begin{align*}
	|\Phi(\alpha)| & \geq \Big|\sum_{v\in V_\alpha^1}m(v)w_v(\alpha)p\left(f_\alpha(C_v)\right)\Big|-\Big|\sum_{v\in V_\alpha^2}m(v)w_v(\alpha)p\left(f_\alpha(C_v)\right)\Big|\\
	& \geq C_\alpha\left((1-\varepsilon)m\big(V_\alpha^1\big)-m\big(V_\alpha^2\big)\right)\\
	& \geq C_\alpha\left(m(V)-2\,m\big(V_\alpha^2\big)-\varepsilon\, m(V).\right)
\end{align*}
We shall finally choose
	\[\varepsilon:=\min_{U\in \mathcal{V}_\mathcal{C}}\frac{m(V)- 2\,m(U)}{2\,m(V)}>0.\]
Indeed, \(\varepsilon\) is positive due to the assumption (\ref{eq:TechAssumpDef}). Using \(V_\alpha^2\in \mathcal{V}_\mathcal{C}\) we conclude
	\[|\Phi(\alpha)|\geq \frac{C_\alpha\varepsilon\, m(V)}{2}>0\]
in contradiction to \(\Phi(\alpha)=0\). This completes the proof.
\end{proof}
\subsection{Bounds on planar combinatorial graphs}\label{subsec:EigComb}
We are now in a position to prove the estimate \eqref{eq:intro-estimate-discrete1}:
\begin{theorem}\label{thm:SpecBoundPlanComb}
	Let \(G=(V,E)\) be a finite, simple, connected and planar graph. Let \(m:V\rightarrow(0,\infty)\) be a vertex weight
	\begin{align}		
		2\,(m(u)+m(v))<m(V)\label{eq:TechAssumpPlan}
	\end{align}
	 for any pair of adjacent vertices \(u,v\in V\). Then, for any edge weight \(\mu:E\rightarrow(0,\infty)\), the spectral gap \(\lambda_2(\mathcal L_{m,\mu})\) of the weighted combinatorial Laplacian \(\mathcal{L}_{m,\mu}\) associated with the weights \(m\) and \(\mu\) admits the spectral bound
	 \begin{align}
		\lambda_2(\mathcal{L}_{m,\mu})\leq 8\frac{d_{\max}^\mu}{m(V)}.\label{eq:SpecBoundPlanComb}
	 \end{align}
\end{theorem}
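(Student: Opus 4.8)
The plan is to combine the circle packing representation of \(G\) with the vector-valued Courant--Fischer principle \eqref{eq:CombCourantFischer}, using the centres of a suitably deformed packing as a test function. Since \(G\) is simple, finite, connected and planar, Corollary~\ref{cor:KoebeAndreevThurston} provides a univalent circle packing \(\mathcal C=(C_v)_{v\in V}\) in \(S^2\) whose intersection graph is exactly \(G\); by univalence, \(C_u\cap C_v\neq\emptyset\) holds precisely when \(u\) and \(v\) are adjacent. The hypothesis \eqref{eq:TechAssumpPlan} is therefore exactly the assumption \eqref{eq:TechAssumpDef} of Lemma~\ref{lem:Deformation}, so I would apply that lemma to obtain a homeomorphism \(f\colon S^2\to S^2\) carrying spherical caps to spherical caps with \(\sum_{v}m(v)\,p(f(C_v))=0\). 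Because \(f\) is a homeomorphism mapping caps to caps, the image family \(\tilde{\mathcal C}=(f(C_v))_{v\in V}\) is again a univalent circle packing with the same intersection graph \(G\); write \(\tilde C_v:=f(C_v)\), \(p_v:=p(\tilde C_v)\in S^2\) and \(r_v:=r(\tilde C_v)\).

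Next I would take \(d=3\) in \eqref{eq:CombCourantFischer} and use the test function \(F\colon V\to\R^3\subset\C^3\), \(F(v):=p_v\). By construction \(\sum_v m(v)F(v)=0\), so \(F\) is admissible, and since each \(p_v\) lies on the unit sphere the denominator comes for free: \(\|F\|_{\ell^2_m(V;\C^3)}^2=\sum_v m(v)|p_v|^2=\sum_v m(v)=m(V)\). It then remains to bound the numerator \(q(F)=\sum_{e=\{u,v\}\in E}\mu(e)\,|p_u-p_v|^2\).

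The key geometric estimate is for the distance between the centres of two touching caps. If \(u\) and \(v\) are adjacent, then \(\tilde C_u\) and \(\tilde C_v\) meet in a single point \(x\), which lies on both bounding circular lines, so \(|p_u-x|=r_u\) and \(|p_v-x|=r_v\); the triangle inequality gives \(|p_u-p_v|\le r_u+r_v\) and hence \(|p_u-p_v|^2\le 2(r_u^2+r_v^2)\). Summing over edges and reorganising as a sum over vertices yields
\[
q(F)\le 2\sum_{e=\{u,v\}}\mu(e)\,(r_u^2+r_v^2)=2\sum_{v\in V}r_v^2\,d_v^\mu\le 2\,d_{\max}^\mu\sum_{v\in V}r_v^2 .
\]
Finally, because the caps \(\tilde C_v\) have mutually disjoint interiors and each has surface area \(\pi r_v^2\) (Remark following the definition of spherical caps), their total area is at most the area \(4\pi\) of \(S^2\), whence \(\sum_v r_v^2\le 4\). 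Combining the last two facts gives \(q(F)\le 8\,d_{\max}^\mu\), and \eqref{eq:CombCourantFischer} yields \(\lambda_2(\mathcal L_{m,\mu})\le q(F)/\|F\|_{\ell^2_m(V;\C^3)}^2\le 8\,d_{\max}^\mu/m(V)\), as claimed.

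The genuinely hard part is already encapsulated in Lemma~\ref{lem:Deformation}: producing a circle-preserving deformation of the packing whose \(m\)-weighted barycentre of cap centres vanishes, which is precisely where the smoothness hypothesis \eqref{eq:TechAssumpPlan} enters (it guarantees the Brouwer-type ray condition). Granting that lemma, the remaining steps are routine; the only points requiring care are verifying that the image packing still represents \(G\), so that adjacency corresponds to tangency, and that the vector-valued variational principle \eqref{eq:CombCourantFischer} applies, both of which are supplied by the preceding material.
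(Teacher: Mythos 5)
Your proposal is correct and takes essentially the same route as the paper's own proof: you invoke the Koebe--Andreev--Thurston packing together with Lemma~\ref{lem:Deformation} to obtain a packing whose \(m\)-weighted barycentre of centres vanishes, use the centres as a vector-valued test function in \eqref{eq:CombCourantFischer}, and bound the Rayleigh quotient via \(|p_u-p_v|^2\le 2(r_u^2+r_v^2)\) and the area bound \(\sum_{v}r_v^2\le 4\). The only (harmless) difference is presentational -- you separate the appeal to Corollary~\ref{cor:KoebeAndreevThurston} from the deformation step, which the paper folds into a single application of Lemma~\ref{lem:Deformation}.
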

\begin{proof}[Proof of Theorem \ref{thm:SpecBoundPlanComb}]
Since \(G\) is planar and the condition \eqref{eq:TechAssumpPlan} is satisfied, we can apply Lemma \ref{lem:Deformation} to choose a univalent circle packing \(\mathcal{C}=(C_v)_{v\in V}\), so that \(G\) is the intersection graph of \(G\) and
	\[\sum_{v\in V}m(v)p_v=0\]
is satisfied for the centre points \(p_v\) of \(C_v\) for \(v\in V\). For \(v\in V\) let \(r_v\) be the radius of \(C_v\). We define the function \(f:V\rightarrow S^2\) by means of \(f(v)=p_v\). Then, \(f\) is a viable test function in \eqref{eq:CombCourantFischer}. It remains to estimate the quotient \(q(f)/||f||_{\ell_m^2(V;\C^3)}^2.\) First of all, since every centre point \(p_v\) is in \(S^2\), we have
	\[\|f\|_{\ell^2_m(V;\C^3)}^2=\sum_{v\in V}m(v)|p_v|^2=m(V).\]
Moreover, if \(v\) and \(u\) are adjacent, the caps \(C_v\) and \(C_u\) intersect, so \(|p_v-p_u|\leq r_u+r_v\) holds by triangle inequality. Using the Young inequality implies \(|p_v-p_u|^2\leq 2(r_u^2+r_v^2)\) and thus
\begin{align}\label{eq:EstimateWithRadii}
	q(f) =\sum_{e=\{u,v\}\in E}\mu(e)|p_u-p_v|^2 \leq 2\sum_{v\in V}\sum_{e\in E_v}\mu(e) r_v^2\leq 2\Deg{\mu}{\max}\sum_{v\in V}r_v^2.	
\end{align}
To estimate the last term we recall that the surface area of a cap \(C_v\) is equal to \(\pi\,r_v^2\). Furthermore, \(\mathcal{C}\) is univalent, so the sum of the surface areas of the caps in \(\mathcal{C}\) is at most equal to the total surface area of the unit sphere, i.e\ \(\sum_{v\in V}\pi r_v^2\leq 4\pi.\) Plugging this into (\ref{eq:EstimateWithRadii}) yields \(q(f)\leq 8\Deg{\mu}{\max}\) and altogether we obtain
\begin{align*}
	\lambda_2(\mathcal{L}_{m,\mu})\leq \frac{q(f)}{||f||_{\ell_m^2(V;\C^3)}^2}\leq 8\frac{\Deg{\mu}{\max}}{m(V)}.
\end{align*}
This proves the claim.
\end{proof}

\begin{remark}\label{ex:CombExTeckAss}
\begin{remarklist}
\item The estimate \eqref{eq:intro-spielman-teng} by Spielman and Teng in the unweighted case \(\mu\equiv 1\) and \(m\equiv 1\) is included in Theorem \ref{thm:SpecBoundPlanComb}. In that case \eqref{eq:TechAssumpDef} states that \(G\) has at least \(5\) vertices. If \(G\) however has less then \(5\) vertices, \eqref{eq:intro-spielman-teng} is trivially satisfied, since
		\[\lambda_2(\mathcal L)\leq \frac{2|E|}{|V|-1}\leq \frac{d_{\max}|V|}{|V|-1}\leq 8\frac{d_{\max}}{|V|}\]
for \(|V|=2,3,4\). Therefore, Spielman and Teng neither had to impose the condition \eqref{eq:TechAssumpDef} nor had to consider it in their proof in \cite{SpielmanTeng:SpecPartWork07}, whereas it plays an important role in our proof of Lemma \ref{lem:Deformation}.
	\item Note that not every graph satisfies the condition \eqref{eq:TechAssumpPlan}: indeed, consider the edge and vertex weights \(\mu\) and \(m\) associated with the (unweighted) normalized Laplacian on \(G\), i.e.\ \(\mu(e)=1\) and \(m(v)=d_v=|E_v|\) for \(e\in E\) and \(v\in V\). Then \eqref{eq:TechAssumpPlan} means that \(d_v+d_u\leq |E|\) holds for any pair of adjacent vertices \(u,v\in V\) which, for instance, is not satisfied for star graphs with at least two edges.
	\item The following example shows that the condition \eqref{eq:TechAssumpPlan} cannot be omitted: let \(K_4\) be the complete graph with vertex set \(V_{K_4}=\{v_1,v_2,v_3,v_4\}\), constant edge weight \(\mu\equiv 1\) and the vertex weight \(m\)  given by \(m(v_1)=a\) for some constant \(a\geq 1\) and \(m(v_j)=1\) for \(j\neq 1\). The corresponding Laplacian has the eigenvalues
		\(\lambda_1=0,\, \lambda_2=\frac{a+3}{a},\, \lambda_3=4,\, \lambda_4=4\). Note that	\(m(V) = 3+a\) and \(\Deg{\mu}{\max}  =3\), so for an estimate of the form \eqref{eq:SpecBoundPlanComb} to hold, there must exist some constant \(C>0\) so that \(\frac{a+3}{a}\leq C\frac{3}{3+a}\) for all \(a\geq 1\). Obviously such \(C\) does not exist. And, in fact, the condition \eqref{eq:TechAssumpPlan} is not satisfied since \(2(m(v_1)+m(v_2))=2a+2\geq a+3 =m(V)\) holds.
\end{remarklist}
\end{remark}

\subsection{Bounds on planar metric graphs}\label{subsec:EigMetr}
Using the results of the previous section and Corollary we can finally prove our main theorem in the planar setting:
\begin{theorem}\label{thm:maintheorem}
		Let \(\mathcal{G}=(G,\ell)\) be a connected, finite and compact metric graph, whose underlying combinatorial graph \(G=(V,E)\) is planar. Then, the first positive eigenvalue \(\lambda_2(\Delta_\mathcal G)\) of the Kirchhoff Laplacian satisfies the spectral estimate
		\begin{align}\label{eq:SpecEstIntroQG}
			\lambda_2(\Delta_\mathcal G)\leq \frac{C\Deg{\mu}{\max}}{L}.	
		\end{align}		
		Here \(L:=\sum_{e\in E}\ell_e\) is the total length of \(\mathcal{G}\), \(d_{\max}^\mu\) is the maximal weighted degree corresponding to the inverse weight of \(\ell\), that is
		\begin{equation}
			\Deg{\mu}{\max}:=\max_{v\in V}\sum_{e\in E_v}\frac{1}{\ell_e},
		\end{equation}
		and \(C>0\) is a generic constant that does not depend on the metric graph \(\mathcal{G}\). In fact, we may choose \(C=16\pi^2\) for general planar metric graphs and \(C=2\pi^2\) if, additionally, \(\mathcal{G}\) is simple and
	\begin{align}\label{eq:TechAsumpQG}
		\Deg{\ell}{v}+\Deg{\ell}{u}<L
	\end{align}
	holds for every pair of adjacent vertices \(u,v\in V\).
	\end{theorem}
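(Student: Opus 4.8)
The plan is to reduce the metric estimate to its combinatorial counterpart by chaining Corollary~\ref{cor:CompareSpectralGaps} with Theorem~\ref{thm:SpecBoundPlanComb}. The bookkeeping hinge is that, for the induced weights \eqref{eq:InducedWeights}, one has
\[
	m(V)=\sum_{v\in V}\Deg{\ell}{v}=\sum_{v\in V}\sum_{e\in E_v}\ell_e=2\sum_{e\in E}\ell_e=2L,
\]
since every edge is counted once at each of its endpoints (loops twice at their single endpoint). In particular the smoothness condition \eqref{eq:TechAssumpPlan}, i.e.\ $2(m(u)+m(v))<m(V)$, becomes for $m=\Deg{\ell}{\cdot}$ exactly the hypothesis \eqref{eq:TechAsumpQG}, namely $\Deg{\ell}{u}+\Deg{\ell}{v}<L$. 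Hence, if $\mathcal G$ is simple and \eqref{eq:TechAsumpQG} holds, I would simply combine the two inequalities:
\[
	\lambda_2(\Delta_\mathcal G)\le \tfrac{\pi^2}{2}\lambda_2(\mathcal L_{m,\mu})\le \tfrac{\pi^2}{2}\cdot 8\frac{\Deg{\mu}{\max}}{m(V)}=\tfrac{\pi^2}{2}\cdot 8\frac{\Deg{\mu}{\max}}{2L}=2\pi^2\frac{\Deg{\mu}{\max}}{L},
\]
which yields the sharper constant $C=2\pi^2$.

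For a general planar metric graph I would remove both extra hypotheses by subdivision. Replacing each edge of $\mathcal G$ by four consecutive edges of one quarter of its length produces a metric graph $\mathcal G'=(G',\ell')$ with the \emph{same} underlying compact metric space, so that $\Delta_{\mathcal G'}=\Delta_\mathcal G$ and the total length $L$ is unchanged, and whose combinatorial graph $G'$ is planar (subdivision preserves planarity) and now \emph{simple}, because a fourfold subdivision turns every loop into a $4$-cycle and separates parallel edges by distinct interior vertices. Granting that $G'$ also satisfies \eqref{eq:TechAsumpQG} (addressed below), the simple case applies to $\mathcal G'$ and gives $\lambda_2(\Delta_\mathcal G)\le 2\pi^2\,\Deg{\mu'}{\max}/L$. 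It then remains to compare inverse-length degrees: an original vertex $v$ only sees its incident edges shortened by a factor $4$, so its degree becomes $4\,\Deg{\mu}{v}\le 4\,\Deg{\mu}{\max}$, while a newly inserted interior vertex on an edge $e$ has inverse-length degree $8/\ell_e\le 8\,\Deg{\mu}{\max}$, since $1/\ell_e$ already appears in the degree of either endpoint of $e$. Therefore $\Deg{\mu'}{\max}\le 8\,\Deg{\mu}{\max}$, and substituting this bound produces exactly $C=16\pi^2$.

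The step I expect to require the most care is verifying \eqref{eq:TechAsumpQG} for the subdivided graph. Writing out the new metric degrees, an interior vertex on $e$ has degree $\ell_e/2$ while an original vertex $v$ has degree $\Deg{\ell}{v}/4$. Every edge of $G'$ joins either two interior vertices of a common edge $e$, contributing $\ell_e/2+\ell_e/2=\ell_e$, or an original vertex $v$ to an adjacent interior vertex on $e$, contributing $(\Deg{\ell}{v}+2\ell_e)/4$. In the loopless case the elementary bounds $\ell_e\le L$ and $\Deg{\ell}{v}\le L$ give $(\Deg{\ell}{v}+2\ell_e)/4\le 3L/4<L$; loops and parallel edges are in any event destroyed by the subdivision, and the same estimate survives there via the refined bound $\Deg{\ell}{v}\le L+\ell_e$. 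The only configurations in which strictness can fail are the two degenerate metric graphs consisting of a single edge (an interval) or a single loop (a circle), for which \eqref{eq:SpecEstIntroQG} is checked directly: there $\lambda_2(\Delta_\mathcal G)=\pi^2/L^2$ and $4\pi^2/L^2$ respectively, both comfortably below $16\pi^2\,\Deg{\mu}{\max}/L$. Collecting the generic case with these two base cases completes the argument.
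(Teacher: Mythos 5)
Your proposal is correct and follows essentially the same route as the paper: combine Corollary~\ref{cor:CompareSpectralGaps} with Theorem~\ref{thm:SpecBoundPlanComb} via the handshake identity \(m(V)=2L\) to get \(C=2\pi^2\) in the simple case, then handle general graphs by the same fourfold subdivision with the same degree bookkeeping \(\Deg{\mu'}{\max}\le 8\,\Deg{\mu}{\max}\) and the same two base cases (interval and loop). One small blemish in your otherwise more detailed verification of \eqref{eq:TechAsumpQG} for the subdivided graph: the ``refined bound'' \(\Deg{\ell}{v}\le L+\ell_e\) is false when \(v\) carries more than one loop (e.g.\ two loops give \(\Deg{\ell}{v}=2L\)), but the needed strict inequality survives anyway, since \(\Deg{\ell}{v}\le 2L\) always holds and \(\ell_e<L\) once there are at least two edges, giving \(\Deg{\ell}{v}/4+\ell_e/2<L/2+L/2=L\).
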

\begin{proof}
	Let us first assume that \(G\) is simple and satisfies the inequality \eqref{eq:TechAsumpQG} for all pairs of adjacent vertices \(u,v\in V\). We consider the combinatorial Laplacian \(\mathcal{L}_{m,\mu}\) associated with the weight functions given by \eqref{eq:InducedWeights}. Note that we have
		\[m(V)=\sum_{v\in V}\Deg{\ell}{v}=2L\]
	 with this choice of weights by the handshaking lemma. We conclude that the inequality \(2(m(v)+m(u))<m(V)\) is equivalent to \(\Deg{\ell}{v}+\Deg{\ell}{u}<L\) for \(u,v\in V\). So, we may apply Theorem \ref{thm:SpecBoundPlanComb} to \(\mathcal{L}_{m,\mu}\). Together with Lemma \ref{eq:CompareSpectralGaps}, we obtain
	 	\[\lambda_2(\Delta_\mathcal G)\leq \frac{\pi^2}{2}\lambda_2(\mathcal{L}_{m,\mu})\leq 4\pi^2 \frac{\Deg{\mu}{\max}}{m(V)}=2\pi^2 \frac{\Deg{\mu}{\max}}{L}.\]
	This proves the statement of Theorem \ref{thm:maintheorem} if \(G\) is simple and \eqref{eq:TechAsumpQG} is satisfied.\\
	Now consider an arbitrary metric graph \(\mathcal{G}\), that is not necessarily simple or does not necessarily satisify \eqref{eq:TechAsumpQG}. Without loss of generality we may assume that \(G\) has at least two edges. Otherwise \(\mathcal{G}\) is either an interval or a loop. Then the spectral gap would be either 
	\begin{align*}
		\begin{array}{ccc}\displaystyle
		\frac{\pi^2}{L^2}=\pi^2\frac{\Deg{\mu}{\max}}{L} & \text{or} &\displaystyle \frac{4\pi^2}{L^2}=2\pi^2\frac{\Deg{\mu}{\max}}{L}
		\end{array}
	\end{align*} and in both cases \eqref{eq:SpecEstIntroQG} holds for \(C=16\pi^2\).\\
	Now, let \(\mathcal{G}'=(G',\ell')\) be the (metric) subdivision graph obtained after dividing each edge of \(\mathcal{G}\) into four edges of equal length (see Figure \ref{fig:subdivision}). We shall write
		\[V_{G'}=V_{\old}\cup V_{\new},\]
	where \(V_{\old}\) is the set of old vertices coinciding with \(V_{G}\) and \(V_{\new}\)  is the set of new vertices that are added on the interior of the edges. Note, that \(G'\) is simple. Moreover, the total length of the graph and the spectral gap of the Kirchhoff Laplacian remain the same after subdividing the graph, since we only add vertices of degree \(2\). For \(v\in V_G\) we have
		\[\Deg{\ell'}{v}=\begin{cases}
			\frac{\Deg{\ell}{v}}{4}, & \text{ if } v\in V_{\old},\\
			\frac{\ell_e}{2}, & \text{ if } v\in V_{\new} \text{ and } v \text{ is on the edge } e.
		\end{cases}\]
Since \(\mathcal{G}\) has at least two edges, this implies \(\Deg{\ell'}{v}+\Deg{\ell'}{u}<L\) for any adjacent vertices \(u,v\in V_{G'}\), thus \(\mathcal{G}\) satisfies the condition \eqref{eq:TechAsumpQG}. We conclude that \(\mathcal{G}'\) satisfies \eqref{eq:SpecEstIntroQG} for \(C=2\pi^2\), i.e.\
	\[\lambda_2(\Delta_{\mathcal G'})\leq 2\pi^2\frac{\Deg{\mu'}{\max}}{L}\]
where \(\mu'\) is the inverse weight of \(\ell'\). Also, note that
	\[\Deg{\mu'}{v}=\begin{cases}
			4 \Deg{\mu}{v}, & \text{ if } v\in V_{\old},\\
			\frac{8}{\ell_e}, & \text{ if } v\in V_{\new} \text{ and } v \text{ is on the edge } e,
		\end{cases}\]
which yields \(\Deg{\mu'}{\max}\leq 8 \Deg{\mu}{\max}\). Using this and the spectral bound for \(\mathcal{G}'\) we conclude
	\[\lambda_2(\Delta_{\mathcal G})=\lambda_2(\Delta_{\mathcal G'})\leq 16\pi^2\frac{\Deg{\mu}{\max}}{L},\]
which completes the proof.
\end{proof}
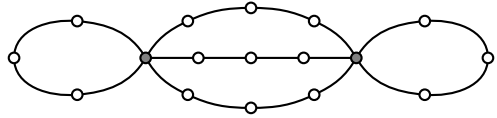
\begin{figure}
	\tikzstyle{my style}=[circle, draw, fill=black!50,
                        inner sep=0pt, minimum width=4pt]
    \tikzstyle{white style}=[circle, draw, fill=white,
                        inner sep=0pt, minimum width=4pt]
	%
	%
	%
	%
    \begin{minipage}{1\textwidth}
    \centering
    
    \begin{tikzpicture}[thick,scale=0.7]%
    	\draw (2,0) to[out=120,in=-25] (1.2,0.7);
    	\draw (1.2,0.7) node[white style]{} to[out=155,in=0] (0,0.95);
    	\draw (0,0.95) node[white style]{} to[out=180,in=25] (-1.2,0.7);
    	\draw (-1.2,0.7) node[white style]{} to[out=-155,in=60] (-2,0);
    	
		\draw (2,0) to[out=-120,in=25] (1.2,-0.7);
    	\draw (1.2,-0.7) node[white style]{} to[out=-155,in=0] (0,-0.95);
    	\draw (0,-0.95) node[white style]{} to[out=-180,in=-25] (-1.2,-0.7);
    	\draw (-1.2,-0.7) node[white style]{} to[out=155,in=-60] (-2,0);

        \draw (2,0) to[out=60,in=185] (3.3,0.7);
        \draw (3.3,0.7) node[white style]{} to[out=5,in=90] (4.5,0);
        \draw (2,0) to[out=-60,in=-185] (3.3,-0.7);
        \draw (3.3,-0.7) node[white style]{} to[out=-5,in=-90] (4.5,0) node[white style]{};
        
        \draw (-2,0) to[out=120,in=-5] (-3.3,0.7);
        \draw (-3.3,0.7) node[white style]{} to[out=175,in=90] (-4.5,0);
        \draw (-2,0) to[out=-120,in=5] (-3.3,-0.7);
        \draw (-3.3,-0.7) node[white style]{} to[out=-175,in=-90] (-4.5,0) node[white style]{};
        
        \draw (2,0) node[my style]{} -- (1,0);
        \draw (1,0) node[white style]{} -- (0,0);
        \draw (0,0) node[white style]{} -- (-1,0);
        \draw (-1,0) node[white style]{} -- (-2,0) node[my style]{};
          
	\end{tikzpicture}
    \end{minipage}\hfill
    %
    %
    %
    %
    
    \caption{Subdivsion of  a non-simple graph. The old vertices are marked black, whereas the newly added vertices are marked white.}\label{fig:subdivision}
\end{figure}
Let us finish this section by considering a number of examples to discuss the spectral estimate \eqref{eq:SpecEstIntroQG} and to compare it with other known results.
\begin{example}[Star graphs]\label{rem:star}
	The spectral gap of the Kirchhoff Laplacian on an equilateral star graph of total length \(L>0\) with \(n\) edges is
		\[\lambda_2(\Delta_{\mathcal{S}_n})=\frac{\pi^2n^2}{4L^2}.\]
	The maximal degree associated to the inverse weight is attained at the center vertex of the star and is given by
		\[\Deg{\mu}{\max}=\frac{n^2}{L},\]
	thus
		\[\lambda_2(\Delta_{\mathcal{S}_n})=\frac{\pi^2\Deg{\mu}{\max}}{4L}.\]
	Therefore, for equilateral star graphs, the growth rate in~\eqref{eq:SpecEstIntroQG} with respect to $\frac{\Deg{\mu}{\max}}{L}$ is correct up to the constant in front of the metric quantities on the right hand side.
\end{example}
\begin{example}[Complete graphs]\label{rem:compl}
	For \(n\in\N\) let \(\mathcal{K}_n\) be the equilateral, complete graph on \(n\) vertices with edge weight \(l\equiv 1\). By Kuratowski's Theorem \cite{Kuratowski:Sur} the underlying combinatorial graph is not planar for \(n\geq 5\). The combinatorial graph has \(\frac{n(n-1)}{2}\) edges and, since the metric graph is equilateral, its total length is equal to the number of edges. Moreover, the degree of every vertex is \(n-1\). The first positive eigenvalue of the associated normalized Laplacian is \(\frac{n}{n-1}\) for \(n\geq 2\), so we may use von Below's formula (\ref{eq:VonBelow}) to derive the spectral gap of the Kirchhoff Laplacian:
		\[\lambda_2(\Delta_{\mathcal{K}_n})=\arccos\left(-\frac{1}{n-1}\right)^2\rightarrow \frac{\pi^2}{4}, \text{ as } n\rightarrow\infty\]
	However,
	\begin{align}\label{eq:EstRohleder}
		\frac{\Deg{\mu}{\max}}{L}=\frac{2}{n}\rightarrow 0, \text{ as } n\rightarrow\infty.
	\end{align}
	Therefore an estimate of the form (\ref{eq:SpecEstIntroQG}) cannot hold for general metric graphs. More precisely, there exists no \(C>0\), so that \eqref{eq:SpecEstIntroQG} holds for any finite, compact and connected metric graph.
\end{example}

\begin{example}[Trees]
	Rohleder \cite{Rohleder:EigEst} has proved that the spectral gap of a compact metric tree \(\mathcal{T}\) -- a compact metric graph without cycles -- admits the estimate
		\[\lambda_2(\Delta_\mathcal{T})\leq \frac{\pi^2}{\diam(\mathcal{T})^2},\]
	where \(\diam(T)\) denotes the metric diameter of the graph, which is the maximal distance between two points in \(\mathcal{T}\). To compare this estimate with our spectral estimate \eqref{eq:SpecEstIntroQG} we restrict ourselves to binary trees of exponential volume growth/decay.\\
	Let \(B_h\) denote the (combinatorial) complete rooted binary tree of height \(h\in\N\), i.e\ \(B_h\) is simple, \(B_h\) has only one vertex \(o\in V_{B_h}\) of degree \(2\), the so called \textit{root} of \(B_h\) and every other vertex \(v\in V_{B_h}\) has either degree \(1\) or degree \(3\) and is connected to \(o\) via a unique path in \(B_h\) whose (combinatorial) length is at most \(h\). The length of this path is called the \textit{generation} \(\gen(v)\) of \(v\). The level of \(v\) is equal to \(h\), if \(v\) has degree \(1\), and it is strictly less than \(h\), if \(v\) has degree \(3\). If an edge \(e\in E_{B_h}\) connects two vertices of generation \(k-1\) and \(k\) we set \(\gen(e)=k\). Note that \(B_h\) has exactly \(2^k\) edges of level \(k\) for \(k=1,\cdots,h\).\\
	We consider two edge weights on \(B_h\). First, let \(\mathcal{B}_h=(B_h,\ell)\) be the associated metric graph of equilateral length \(\ell\equiv 1\). In this case the diameter of \(\mathcal{B}_h\) is \(\diam(\mathcal{B}_h)=2h\), the total length is \(L=2(2^h-2)\) and the maximal degree is \(\Deg{\mu}{\max}=3\), so estimate \eqref{eq:EstRohleder} gives the upper bound \(\lambda_1(\Delta_{\mathcal{B}_h})\leq \frac{\pi^2}{4h^2},\) whereas our estimate \eqref{eq:SpecEstIntroQG}  yields \(\lambda_2(\Delta_{\mathcal{B}_h})\leq \frac{36}{2^h-2}.\)	So, our estimate gives asymptotically a much sharper bound in the equilateral case.\\
	Now, we consider the edge weight \(l\) given by \(l(e)=2^{-\gen(e)}\), i.e.\ \(l\) is of exponential decay with respect to generation of edges. Then we have \(\diam(\mathcal{B}_h)=2(1-2^{-h})\), \(L=h\) and \(\Deg{\mu}{\max}=5\cdot 2^h\). From \eqref{eq:EstRohleder} we obtain \(\lambda_2(\Delta_{\mathcal{B}_h})\leq \frac{\pi^2}{2(1-2^{-h})},\) whereas our bound \eqref{eq:SpecEstIntroQG} leads to \(\lambda_2(\Delta_{\mathcal{B}_h})\leq \frac{120\cdot 2^h}{h}.\) Thus, \eqref{eq:EstRohleder} gives an asymptotically sharper bound for \(l(e)=2^{-\gen(e)}\), which supports the conjecture that the estimate \eqref{eq:SpecEstIntroQG} gives an asymptotically sharper bound, when the edge lengths do not vary too much.
\end{example}
\section{Eigenvalue bounds for graphs of higher genus}\label{sec:boundhigher}
\subsection{The weighted normalized Laplacian}
Let \(G=(V_G,E_G)\) be a simple finite graph and let \(\omega:E\rightarrow(0,\infty)\) be an edge weight. We recall that the normalized Laplacian \(\mathcal L_{\mathrm{norm}}^\omega\) is the operator acting on the space of functions \(f:V_G\rightarrow\mathbb C\) given by
	\[(\mathcal L_{\mathrm{norm}}^\omega f)(v)=\frac{1}{\Deg{\omega}{v}}\sum_{e=\{u,v\}\in E_v}\omega(e)(f(v)-f(u)),\quad v\in V_G,\]
so in the language of Section \ref{subsec:prel-weight-comb-graphs} we have \(\mathcal L_{\mathrm{norm}}^\omega=\mathcal L_{m,\mu}\) with \(\mu(e)=\omega(e)\) for \(e\in E_G\) and \(m(v)=\Deg{\omega}{v}\) for \(v\in V_G\). For our analysis it will be useful to understand the behavior of the spectrum of \(\mathcal L_{\mathrm{norm}}^\omega\) under subdivision of the graph. Let \(G'=(V_{G'},E_{G'})\) be subdivision graph of \(G\) obtained after adding a new vertex on each edge, i.e.\
\begin{align*}
	V_{G'} & =V_G\cup E_G, & E_{G'} & =\left\{\{v,e\}~|~v\in V_G,~e\in E_v\}\right\}
\end{align*}
with the edge weight \(\omega'\) given by
	\[\omega'(\{v,e\})=\omega(e), \quad v\in V_G,~e\in E_v.\]
\begin{lemma}
	Let \(\lambda\in [0,2]\setminus \{1\}\). Then \(\lambda\) is an eigenvalue of \(\mathcal L_{\mathrm{norm}}^{\omega'}(G')\), if and only if \(R(\lambda):=4\lambda-2\lambda^2\) is an eigenvalue of \(\mathcal L_{\mathrm{norm}}^{\omega}(G)\). In that case, \(\lambda\) and \(R(\lambda)\) have the same multiplicity.
\end{lemma}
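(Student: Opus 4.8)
The plan is to prove the statement by exhibiting the restriction map $f \mapsto f|_{V_G}$ as a linear isomorphism between the $\lambda$-eigenspace of $\mathcal{L}_{\mathrm{norm}}^{\omega'}$ on $G'$ and the $R(\lambda)$-eigenspace of $\mathcal{L}_{\mathrm{norm}}^{\omega}$ on $G$. The decisive structural feature is that the two vertex classes of $G'$ behave very differently. Each new vertex $e=\{u,v\}\in E_G\subset V_{G'}$ has precisely the two neighbours $u$ and $v$, and both incident edges carry the weight $\omega(e)$, so $\Deg{\omega'}{e}=2\omega(e)$; writing out $(\mathcal{L}_{\mathrm{norm}}^{\omega'}f)(e)=\lambda f(e)$ and simplifying gives $(1-\lambda)f(e)=\tfrac12\bigl(f(u)+f(v)\bigr)$. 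Since $\lambda\neq1$ by hypothesis, this can be solved for the value at the new vertex,
\[
	f(e)=\frac{f(u)+f(v)}{2(1-\lambda)},
\]
which is exactly where the exclusion of $\lambda=1$ is used. By contrast, an old vertex $v\in V_G$ keeps its weighted degree under subdivision, $\Deg{\omega'}{v}=\sum_{e\in E_v}\omega(e)=\Deg{\omega}{v}$, because its neighbours in $G'$ are exactly the new vertices $e\in E_v$, joined by edges of weight $\omega(e)$.

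Next I would substitute the formula for $f(e)$ into the eigenvalue equation $(\mathcal{L}_{\mathrm{norm}}^{\omega'}f)(v)=\lambda f(v)$ at an old vertex $v$. After clearing the factor $2(1-\lambda)$ and collecting terms, the coefficient of $f(v)$ in the averaging part works out to $2(1-\lambda)^2-1=1-4\lambda+2\lambda^2$, so that with $g:=f|_{V_G}$ one obtains
\[
	(\mathcal{L}_{\mathrm{norm}}^{\omega}g)(v)=\bigl(1-(1-4\lambda+2\lambda^2)\bigr)g(v)=(4\lambda-2\lambda^2)\,g(v)=R(\lambda)\,g(v).
\]
Thus the restriction $g$ of any $\lambda$-eigenfunction on $G'$ is an $R(\lambda)$-eigenfunction on $G$.

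For the converse I would run the same computation backwards: given $g$ with $\mathcal{L}_{\mathrm{norm}}^{\omega}g=R(\lambda)g$, define $f$ on $V_{G'}$ by $f|_{V_G}=g$ and $f(e):=\frac{g(u)+g(v)}{2(1-\lambda)}$ for $e=\{u,v\}$. The eigenvalue equation at the new vertices then holds by the very definition of $f(e)$, while the equation at the old vertices is precisely the algebra of the previous paragraph read in reverse; hence $f$ is a $\lambda$-eigenfunction of $\mathcal{L}_{\mathrm{norm}}^{\omega'}$. Restriction and this extension are mutually inverse linear maps between the two eigenspaces, so the eigenspaces have equal dimension and the multiplicities coincide. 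The only point requiring care — and the main, if modest, obstacle — is the injectivity of the restriction map: if $g=f|_{V_G}\equiv0$, then the formula for $f(e)$ forces $f(e)=0$ at every new vertex as well, whence $f\equiv0$; this again relies on $\lambda\neq1$. Finally, I would note that the algebraic correspondence is in fact valid for every $\lambda\neq1$, and that restricting to $\lambda\in[0,2]$ merely reflects that the spectra of both normalized Laplacians lie in $[0,2]$ and that $R$ maps $[0,2]$ into $[0,2]$, so this is the only range in which either side of the equivalence can hold.
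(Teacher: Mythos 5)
Your proof is correct, and it is essentially the standard argument: the paper itself gives no proof of this lemma, deferring to \cite{ComellasXieZhang} for the unweighted case with the remark that the weighted extension is easy, and the argument in that reference is precisely your elimination of the new-vertex values $f(e)=\frac{f(u)+f(v)}{2(1-\lambda)}$ (valid since $\lambda\neq 1$) followed by substitution at the old vertices to get $\mathcal L_{\mathrm{norm}}^{\omega}g=R(\lambda)g$ for $g=f|_{V_G}$, with restriction and extension as mutually inverse maps between the eigenspaces. Your computation, including the identity $2(1-\lambda)^2-1=1-4\lambda+2\lambda^2$ and the observation $\Deg{\omega'}{v}=\Deg{\omega}{v}$ for old vertices, checks out and supplies exactly the weighted verification the paper leaves implicit.
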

A proof in the unweighted case (\(\omega\equiv 1\)) is given in \cite{ComellasXieZhang} and it can easily be extended to the weighted case. The lemma immediately yields
	\[\lambda_k(\mathcal L_{\mathrm{norm}}^{\omega}(G))=4\lambda_k(L_{\mathrm{norm}}^{\omega'}(G'))-2\lambda_k(L_{\mathrm{norm}}^{\omega'}(G'))^2,\quad k=1,\ldots, |V_G|,\]
and, in particular,
\begin{equation}\label{eq:ev-estimate-subdivision}
	\lambda_k(\mathcal L_{\mathrm{norm}}^{\omega}(G))\leq 4\lambda_k(L_{\mathrm{norm}}^{\omega'}(G')),\quad k=1,\ldots, |V_G|.
\end{equation}
\subsection{Measured manifolds and the Theorem of Amini and Cohen-Steiner}\label{subsec:ACS-theorem}
Let \(M\) be a closed, oriented, connected and compact smooth surface with a conformal class \(\confclass\) of Riemannian metrics on \(M\). For \(\rmetric\in\confclass\) let \(\mu_\rmetric\) denote the Riemannian measure on \(M\) induced by the metric \(\rmetric\) and let \(\nabla_\rmetric\) denote the gradient defined on the space of smooth functions on \(M\).
\begin{definition}
	Let \(U\subset M\) be an open subset and let \(\mu\) be a finite Radon measure on \(M\) that is absolutely continuous with respect to \(\mu_\rmetric\) for some -- or equivalently all -- \(\rmetric\in\confclass\). For \(k\in \N\) we define the generalized eigenvalues
		\[\lambda_k(U,\mu):=\inf_{\Lambda_k}\sup_{F\in \Lambda_k\setminus\{0\}} \frac{\int_U|\nabla_\rmetric F|^2\diff \mu_\rmetric}{\int_U|F|^2\diff \mu},\]
	where \(\Lambda_k\) varies over the family of \(k\)-dimensional subspaces of \(C^\infty(U)\cap L_\mu^2(U)\).
\end{definition}
We point out that \(\int_U|\nabla_\rmetric F|^2\diff \mu_\rmetric=\int_U|\nabla_\Rmetric F|^2\diff \mu_\Rmetric\) holds for \(\rmetric, \Rmetric\in\confclass\), so the previous definition does not depend on the choice of the Riemannian metric \(\rmetric\in\confclass\). Also, note that
 	\[0=\lambda_1(M,\mu)<\lambda_2(M,\mu) \leq\lambda_3(M,\mu)\leq \ldots \rightarrow \infty\]
are in fact the eigenvalues of the Laplace--Beltrami operator \(\Delta_\rmetric=-\text{div}_\rmetric\circ\nabla_\rmetric\) on \(M\) if we choose \(\mu=\mu_\rmetric\) for \(\rmetric\in\confclass\).
\begin{definition}
	Let \(\mu\) be a finite and absolutely continuous Radon measure on \(M\). A double cover of \(M\) is a finite family \((U_i)_{i\in I}\) of open and connected subsets \(U_i\subset M\), so that for almost every point \(p\in M\) there are two indices \(i_1\neq i_2\) in \(I\) with \(p\in U_{i_1}\cap U_{i_2}\) and \(p\notin U_j\) for \(j\notin \{i_1,i_2\}\). The vicinity graph \(\Gamma\) associated with \((U_i)_{i\in I}\) is the simple graph with vertex set \(I\) where \(i_1\neq i_2\) are adjacent if and only if \(\mu(U_{i_1}\cap U_{i_2})>0\). The measure \(\mu\) induces an edge weight on \(\Gamma\), which by abuse of notation shall be denoted with \(\mu\) as well, given by
		\[\mu(\{i_1,i_2\}):=\mu(U_{i_1}\cap U_{i_2}).\]
	Let \(\mathcal L_{\mathrm{norm}}^{\mu}\) be the associated normalized Laplacian on \(\Gamma\).
\end{definition}
The following theorem by Amini and Cohen-Steiner \cite{AminiCohenSteiner:TransferPrinc} compares the generalized eigenvalues with the eigenvalues of the normalized Laplacian on the vicinity graph.
\begin{theorem}\label{thm:acs-theorem}
	Let \(\mu\) be a finite and absolutely continuous Radon measure on \(M\) and let \((U_i)_{i\in I}\) be a double cover of \(M\). Then we have the eigenvalue estimate
		\[\lambda_k(\mathcal L_{\mathrm{norm}}^{\mu})\leq 2\frac{\lambda_k(M,\mu)}{\eta}, \quad k=1,\ldots, |I|,\]
	where
		\[\eta=\min_{i\in I}\lambda_2(U_i,\mu).\]
\end{theorem}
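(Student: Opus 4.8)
The plan is to bound $\lambda_k(\mathcal L_{\mathrm{norm}}^\mu)$ from above through the Courant--Fischer min-max principle for the weighted combinatorial Laplacian $\mathcal L_{m,\mu}$ on $\Gamma$, using as test functions on the vertex set $I$ the $\mu$-averages over the sets $U_i$ of manifold test functions that realize $\lambda_k(M,\mu)$. Concretely, let $\Lambda_k\subset C^\infty(M)\cap L^2_\mu(M)$ be a $k$-dimensional subspace that is optimal in the variational definition of $\lambda_k(M,\mu)$, so that $\int_M|\nabla_\rmetric F|^2\,\diff\mu_\rmetric\le \lambda_k(M,\mu)\int_M|F|^2\,\diff\mu$ for all $F\in\Lambda_k$. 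For such $F$ I define $g_F\colon I\to\C$ by $g_F(i)=\tfrac{1}{\mu(U_i)}\int_{U_i}F\,\diff\mu$ and feed $g_F$ into the min-max formula for $\lambda_k(\mathcal L_{\mathrm{norm}}^\mu)$. Before estimating I reduce to the nontrivial regime: every eigenvalue of a normalized Laplacian lies in $[0,2]$, so the asserted inequality holds automatically whenever $2\lambda_k(M,\mu)/\eta\ge 2$, and hence I may assume $\lambda_k(M,\mu)<\eta$ throughout.

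Two structural facts drive the estimate. First, the double-cover property forces, for almost every $p\in M$, the two indices $i$ with $p\in U_i$ to determine a unique edge, so the overlaps $\{U_i\cap U_j\}_{\{i,j\}\in E}$ partition $M$ up to null sets; consequently $\sum_{i\in I}\int_{U_i}h\,\diff\mu=2\int_M h\,\diff\mu$ for every integrable $h$, and in particular $\Deg{\mu}{i}=\sum_{j\sim i}\mu(U_i\cap U_j)=\mu(U_i)$. Second, since $U_i$ is connected the first generalized eigenvalue vanishes on constants, and the variational definition of $\lambda_2(U_i,\mu)\ge\eta$ yields the Poincar\'e inequality $\int_{U_i}|F-g_F(i)|^2\,\diff\mu\le\eta^{-1}\int_{U_i}|\nabla_\rmetric F|^2\,\diff\mu_\rmetric$. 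For the numerator $q(g_F)=\sum_{\{i,j\}\in E}\mu(U_i\cap U_j)\,|g_F(i)-g_F(j)|^2$ I insert the $\mu$-average of $F$ over $U_i\cap U_j$ as an intermediate reference, apply Cauchy--Schwarz to pass from a term $\mu(U_i\cap U_j)\,|g_F(i)-\bar F_{ij}|^2$ to $\int_{U_i\cap U_j}|F-g_F(i)|^2\,\diff\mu$, and then charge each such term to a single endpoint. Summing the contributions charged to a fixed vertex $i$ reassembles $\int_{U_i}|F-g_F(i)|^2\,\diff\mu$, so the Poincar\'e inequality together with the partition identity gives $q(g_F)\le C\,\eta^{-1}\int_M|\nabla_\rmetric F|^2\,\diff\mu_\rmetric$ with an absolute constant $C$ and, crucially, no dependence on the combinatorial degrees of $\Gamma$.

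The crux is the denominator together with the injectivity of $F\mapsto g_F$ on $\Lambda_k$. From $\int_{U_i}|F|^2\,\diff\mu=\int_{U_i}|F-g_F(i)|^2\,\diff\mu+\mu(U_i)|g_F(i)|^2$ and the Poincar\'e inequality applied to the first summand I obtain $\mu(U_i)|g_F(i)|^2\ge\int_{U_i}|F|^2\,\diff\mu-\eta^{-1}\int_{U_i}|\nabla_\rmetric F|^2\,\diff\mu_\rmetric$; summing over $i$, using the partition identity and $\int_M|\nabla_\rmetric F|^2\le\lambda_k(M,\mu)\int_M|F|^2$, produces a lower bound of the form $\|g_F\|_{\ell^2_m(I)}^2\ge c\,(1-\lambda_k(M,\mu)/\eta)\int_M|F|^2\,\diff\mu$, which is strictly positive exactly because we arranged $\lambda_k(M,\mu)<\eta$. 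Hence $g_F=0$ forces $F=0$, so $\{g_F:F\in\Lambda_k\}$ is a genuine $k$-dimensional subspace of $\ell^2_m(I)$, and inserting the numerator and denominator bounds into Courant--Fischer yields an estimate of the asserted shape. The main obstacle is precisely this injectivity/denominator control: one must ensure that the $\mu$-averaging map does not collapse $\Lambda_k$, and a careful, suitably localized version of the energy estimates is needed to extract the sharp constant $2$. The trivial regime $\lambda_k(M,\mu)\ge\eta$ and the universal bound $\lambda_k(\mathcal L_{\mathrm{norm}}^\mu)\le 2$ are what absorb the loss in the range where $\lambda_k(M,\mu)$ approaches $\eta$ and the averaging map becomes near-degenerate.
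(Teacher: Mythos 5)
The paper itself does not prove this theorem: it is quoted verbatim from Amini and Cohen-Steiner \cite{AminiCohenSteiner:TransferPrinc}, so your attempt can only be compared against that source. Your skeleton is the standard discretization argument and several of its ingredients are correct and well executed: the a.e.-exactly-two covering property does give the partition identity \(\sum_{i\in I}\int_{U_i}h\,\diff\mu=2\int_M h\,\diff\mu\) and \(\Deg{\mu}{i}=\mu(U_i)\); the local Poincar\'e inequality \(\int_{U_i}|F-g_F(i)|^2\,\diff\mu\leq \eta^{-1}\int_{U_i}|\nabla_\rmetric F|^2\,\diff\mu_\rmetric\) does follow from the definition of \(\lambda_2(U_i,\mu)\) by testing with \(\Span\{1,F\}\); and your injectivity argument for the averaging map via the denominator lower bound is sound in the regime \(\lambda_k(M,\mu)<\eta\). (Two small points: an optimal \(\Lambda_k\) need not exist, so one should work with \(\varepsilon\)-near-optimal subspaces; and each edge term is charged to \emph{both} endpoints, not ``a single endpoint'' -- that is where a factor \(2\) enters.)

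The genuine gap is quantitative, and it sits exactly at the point you defer. Carry your estimates out precisely: with \(P:=\sum_{i}\int_{U_i}|F-g_F(i)|^2\,\diff\mu\) one has the exact identity \(\|g_F\|_{\ell^2_m}^2=2\int_M|F|^2\,\diff\mu-P\), the edge estimate \(q(g_F)\leq 2P\) (Young's inequality on each overlap), and \(P\leq \tfrac{2}{\eta}\int_M|\nabla_\rmetric F|^2\,\diff\mu_\rmetric\) (the factor \(2\) coming from double counting the Dirichlet energy over the cover). Writing \(t=\lambda_k(M,\mu)/\eta\), the resulting Rayleigh-quotient bound is \(\tfrac{2t}{1-t}\), which is \emph{strictly} larger than the asserted \(2t\) for every \(t\in(0,1)\), while your trivial cap \(\lambda_k(\mathcal L_{\mathrm{norm}}^{\mu})\leq 2\) only disposes of \(t\geq 1\): for instance at \(t=0.9\) the theorem asserts \(\lambda_k\leq 1.8\), whereas your estimates give only \(\min\{2,\,18\}=2\). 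Even the self-consistent bootstrap \(\lambda_k\bigl(2\|F\|^2-P\bigr)\leq 2P\) with \(P\leq 2t\|F\|^2\) yields at best \(\lambda_k\leq 4\lambda_k(M,\mu)/\eta\), i.e.\ the statement with constant \(4\). To reach the constant \(2\) one must save a factor \(2\) in either the edge estimate or the energy double counting (then the identity \(\|g_F\|^2=2\|F\|^2-P\) gives \(\lambda_k\leq\tfrac{2t}{2-t}\leq 2t\) for \(t\leq1\)), and nothing in your sketch accomplishes this -- your own sentence that ``a careful, suitably localized version of the energy estimates is needed to extract the sharp constant \(2\)'' concedes precisely the content of the theorem. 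As written, the proposal proves a correct inequality of the same shape with a worse constant, but not the quoted statement.
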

It was already observed in \cite{AminiCohenSteiner:TransferPrinc} that Theorem \ref{thm:acs-theorem} also holds for double covers of metric graphs and the eigenvalues of the Laplacian on metric graphs. This idea was further developed by Mugnolo and the present author \cite{mugnolo2019lower} and it turns out that, if one chooses suitable double covers of the graph, Theorem \ref{thm:acs-theorem} generally provides sharper lower bounds for the eigenvalues of the Laplacian compared to previously known abstract results.
\subsection{Bounds on embedded combinatorial graphs}\label{subsec:proof-genus}
Our main result of this section will be the following:
\begin{theorem}\label{thm:est-normlapl}
	Let \(G=(V,E)\) be a finite, simple and connected graph of genus \(g\) and let \(\omega:E\rightarrow(0,\infty)\) be an edge weight. Then the corresponding normalized Laplacian satisfies
	\begin{equation}\label{eq:discrete-estimate-genus}
		\lambda_k(\mathcal L_{\mathrm{norm}}^\omega)\leq C\frac{\Deg{\omega}{\max} (g+k)}{\sum_{e\in E} \omega(e)}
	\end{equation}
for all $k=1,\ldots,|V|$, where \(C\) is a generic constant that does not depend on $G$.
\end{theorem}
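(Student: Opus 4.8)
The plan is to reduce \eqref{eq:discrete-estimate-genus} to Hassannezhad's manifold bound \eqref{eq:Hassannezhad} by means of the transfer principle of Theorem~\ref{thm:acs-theorem}, adapting the construction of Amini and Cohen-Steiner to the weighted setting. The crucial observation is that the double cover one builds naturally produces the \emph{subdivision} $G'$ as its vicinity graph rather than $G$ itself, so that the subdivision estimate \eqref{eq:ev-estimate-subdivision} will transport the bound back to $G$ at the cost of a harmless factor. Concretely, I would first fix a cellular embedding of $G$ into a closed, oriented surface $M$ of genus $g$ (which exists by the definition of the genus) and pass to the subdivision graph $G'$ with $V_{G'}=V_G\cup E_G$ and weight $\omega'$. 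I then aim to realise $G'$ as the vicinity graph $\Gamma$ of a double cover $(U_i)_{i\in V_{G'}}$ of $M$: equipping $M$ with a piecewise-Euclidean metric $\rmetric\in\confclass$ assembled from Euclidean triangles, I cut $M$ into \emph{vertex cells} $U_v$ ($v\in V_G$) around the embedded vertices and \emph{edge cells} $U_e$ ($e\in E_G$) around the embedded edges, so that almost every point lies in exactly one vertex cell and one edge cell. Then $\mu_\rmetric(U_v\cap U_e)>0$ precisely when $v$ is an endpoint of $e$, and the vicinity graph is exactly $G'$.

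Next I would calibrate the triangles so that the metric encodes the weight. Prescribing $\mu_\rmetric(U_v\cap U_e)=\omega(e)$ for each incidence $v\in e$ makes the induced edge weight on $\Gamma$ equal to $\omega'$, and a double-cover count gives $\mu_\rmetric(U_v)=\Deg{\omega}{v}$, $\mu_\rmetric(U_e)=2\omega(e)$, and total area
\[
	\mathrm{Vol}_\rmetric(M)=\tfrac12\sum_{i}\mu_\rmetric(U_i)=2\sum_{e\in E}\omega(e).
\]
Since each cell is a union of Euclidean triangles that I arrange to stay uniformly \emph{round} (diameter squared comparable to area), the Payne--Weinberger inequality yields $\lambda_2(U_i,\mu_\rmetric)\geq c/\mu_\rmetric(U_i)$; and because $\omega(e)\leq\Deg{\omega}{v}\leq\Deg{\omega}{\max}$ for any endpoint $v$ of $e$, every cell has area at most $2\Deg{\omega}{\max}$, whence
\[
	\eta=\min_i\lambda_2(U_i,\mu_\rmetric)\geq \frac{c}{2\,\Deg{\omega}{\max}}.
\]

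With these estimates in hand I would assemble the bound. Applying Theorem~\ref{thm:acs-theorem} with $\mu=\mu_\rmetric$ and Hassannezhad's bound \eqref{eq:Hassannezhad} for $\lambda_k(M,\mu_\rmetric)$ gives
\[
	\lambda_k(\mathcal L_{\mathrm{norm}}^{\omega'}(G'))=\lambda_k(\mathcal L_{\mathrm{norm}}^{\mu}(\Gamma))\leq\frac{2\lambda_k(M,\mu_\rmetric)}{\eta}\leq \frac{2C(g+k)}{\eta\,\mathrm{Vol}_\rmetric(M)}\leq C'\frac{\Deg{\omega}{\max}(g+k)}{\sum_{e\in E}\omega(e)},
\]
and the subdivision estimate \eqref{eq:ev-estimate-subdivision} then yields $\lambda_k(\mathcal L_{\mathrm{norm}}^{\omega}(G))\leq 4\lambda_k(\mathcal L_{\mathrm{norm}}^{\omega'}(G'))$, which is precisely \eqref{eq:discrete-estimate-genus} with $C=4C'$ for all $k=1,\ldots,|V|$.

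The main obstacle is the weighted construction underlying the second step. In the unweighted case one may use a fixed reference metric, but here the triangles must be scaled by the prescribed weights $\omega(e)$, and I must ensure the cells simultaneously realise the prescribed overlap areas \emph{and} remain uniformly round, so that $\eta$ is controlled by $\Deg{\omega}{\max}$ alone. The delicate case is a vertex cell $U_v$, which fuses one triangle per incident edge of widely varying sizes: its diameter is governed by $\sqrt{\max_{e\in E_v}\omega(e)}$ while its area is $\Deg{\omega}{v}$, so the elementary inequality $\max_{e\in E_v}\omega(e)\leq\Deg{\omega}{v}$ is exactly what keeps $\diam(U_v)^2$ comparable to $\mu_\rmetric(U_v)$ and lets the Payne--Weinberger bound survive. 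Making this bookkeeping rigorous -- choosing the triangulation and the flat metric so that every cell stays convex with controlled eccentricity -- is the technical heart of the proof.
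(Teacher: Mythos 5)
Your overall architecture is exactly the paper's: embed $G$ in a surface $M$ of genus $g$, build a double cover whose vicinity graph is the subdivision $G'$, bound $\lambda_k(M,\mu)$ by Hassannezhad's estimate \eqref{eq:Hassanezhad-applied}, control $\eta$ via Payne--Weinberger, and return to $G$ through the subdivision estimate \eqref{eq:ev-estimate-subdivision} at the cost of a factor $4$. Your bookkeeping (overlap areas proportional to $\omega(e)$, $\mu(U_v)=\Deg{\omega}{v}$, total volume proportional to $\sum_e\omega(e)$) also matches; whether $\Deg{\omega}{\max}$ enters through $\eta$, as in your normalization, or through the total volume, as in the paper's, is immaterial since the quotient $\lambda_k(M,\mu)/\eta$ is invariant under global rescaling of the metric. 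However, there is a genuine gap, and it sits precisely at the step you yourself flag as ``the technical heart'': you propose to scale the \emph{sizes} of the Euclidean triangles by the weights $\omega(e)$ and then argue that each fused cell is ``uniformly round'' because $\max_{e\in E_v}\omega(e)\leq\Deg{\omega}{v}$ gives $\diam(U_v)^2\lesssim\mu_\rmetric(U_v)$. This does not suffice. Payne--Weinberger requires \emph{convexity} (or at least uniform John-domain constants), not merely a diameter-to-area comparison: a vertex cell fusing one large triangle with a chain of tiny triangles of widely varying sizes is badly non-convex -- it develops notches and thin corridors near the apex -- and no lower bound on its Neumann gap follows from roundness alone. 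Since the weights $\omega(e)$ incident to a single vertex are arbitrary, this failure of convexity is unavoidable in the size-scaled construction, and your estimate $\eta\geq c/(2\Deg{\omega}{\max})$ is unsubstantiated.

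The paper's resolution, which is the one idea missing from your sketch, is to encode the weights in the \emph{angles} rather than the sizes: every triangle $T_e^i$ is taken isosceles with the two legs of length exactly $1$ and apex half-angle $\alpha_e=\frac{1}{2}\arcsin\bigl(\frac{\pi\omega_e}{2\Deg{\omega}{\max}}\bigr)$, so that $\mu_\Rmetric(S_{e,v})=\frac{\pi\omega_e}{4\Deg{\omega}{\max}}$ realizes the prescribed overlaps while all cells are fans of unit radius. The normalization by $\Deg{\omega}{\max}$ then yields the crucial cone-angle bound $\theta_v=2\sum_{e\in E_v}\alpha_e\leq\pi$ of \eqref{eq:angle-sing} (via $\arcsin x\leq\frac{\pi}{2}x$), so that cutting $C_v$ along one added edge -- cutting only decreases $\lambda_2$, a step your write-up also omits, since $C_v$ wraps around a conical singularity -- unfolds it to a \emph{convex} planar polygon of diameter at most $2$; the diamonds $D_e$ are convex of diameter at most $2$ as well. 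Payne--Weinberger then gives the uniform bound $\eta\geq\pi^2/4$ independently of $\omega$, and $\Deg{\omega}{\max}$ instead appears in the total volume $\mu_\Rmetric(M)=\sum_{e}\frac{\pi\omega_e}{2\Deg{\omega}{\max}}$, from which \eqref{eq:discrete-estimate-genus} follows exactly along the lines you assembled. So your reduction and final arithmetic are sound, but the construction certifying the hypothesis of Payne--Weinberger -- unit legs, weighted angles, the bound $\theta_v\leq\pi$, and the cut-and-unfold convexity argument -- is the substance of the proof, and your proposal does not supply a working substitute for it.
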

This theorem improves the main result by Amini and Cohen-Steiner in \cite{AminiCohenSteiner:TransferPrinc} to the weighted case. The main idea of their proof was to construct a double cover on a surface of genus \(g\) that represents the combinatorial structure of \(G\). To adapt their proof we recall this construction.
\begin{figure}

	\tikzstyle{mystyle}=[circle, draw, fill=gray,
                        inner sep=0pt, minimum width=4pt]
    \tikzstyle{mystyleR}=[draw, fill=white,
                        inner sep=0pt, minimum width=4pt, minimum height=4pt]
	\begin{tikzpicture}[thick,scale=1]%
	\draw[->] (0:2.7) --node[above]{\tiny\(\varphi_f\)} (0:4.6);
\begin{scope}[xshift=7cm]
	\draw (36:1) node[color=black]{\tiny \(f\)};
  
    	\draw \foreach \x in {0,72,144,...,288} {
        
        (\x:2) edge[color=black] (\x:2.3)
        (\x:2) edge[color=black] (\x+72:2)
        
	};
	
	\draw \foreach \x in {0,72,144,...,288} {(\x:2) edge[dashed,color=black] (0,0)};
	\draw \foreach \x in {0,72,144,...,288} {(\x:2) node[mystyle]{}};
	\draw (288:2) edge[out=54,in=342,dashed,color=black] (0,0);
	\draw (288:2) edge[out=162,in=234,dashed,color=black] (0,0);
	
	\draw (288:1) node[mystyle]{} -- (288:2) node[mystyle]{};
	\draw (0,0) node[mystyleR]{}; \draw (0,0) node[left]{\tiny\(v_f\)};
	\draw \foreach \x in {1,2,3,4} {(72*\x-60:2.2) node{\tiny \(v_f^\x\)}};
	\draw (288:2) node[right]{\tiny \(v_f^5=v_f^7\)};
	\draw (302:0.9) node{\tiny\(v_f^6\)};
	\end{scope}
	
	\draw (25:1) node[color=black]{\tiny \(D_f\)};
%
%
%
	\draw[thick,color=black] (0,0) circle [radius=1.8] ;
	\draw \foreach \x in {0,1,2,3,4,5,6} {(360*\x/7:1.8) edge[dashed,color=black] (0,0)};
	\draw \foreach \x in {1,2,3,4,5,6,7}  {(10+360*\x/7-360/7:2.1) node{\tiny \(z_f^\x\)}};
	\draw \foreach \x in {0,1,2,3,4,5,6}  {(360*\x/7:1.8) node[mystyle]{}};
%
	\draw (0,0) node[mystyleR]{}; 
	\end{tikzpicture}
\caption{Construction of the extended graph \(H\). The white vertex is the added vertex on \(f\) and the dashed edges are the added edges.}\label{fig:const-H}
\end{figure}
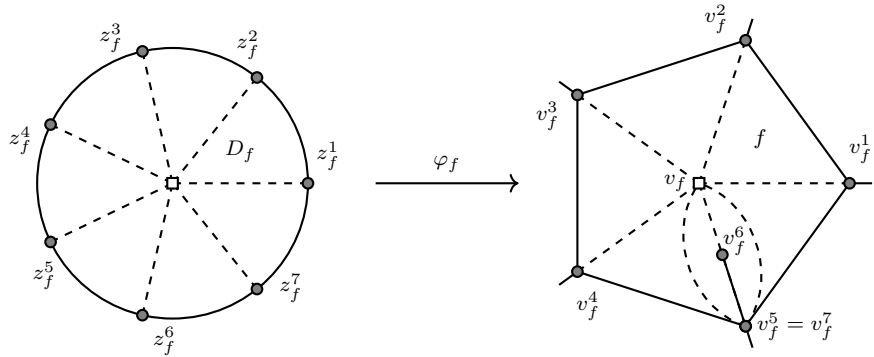

Consider an embedding  of \(G\) on a oriented, closed, connected and compact surface \(M\) of genus \(g\). For the sake of notation, we identify the vertices and edges of \(G\) with their drawings on \(M\). Let \(F\) denote the set of (open) faces on \(M\) enclosed by \(G\). Since \(M\) has the same genus as \(G\), every face \(f\in F\) is simply connected (see for instance \cite[Prop. 3.4.1]{MoharThomassen:GraphsOnSurfaces}). Thus, there exists a homeomorphism \(\varphi_f:D_f\rightarrow f\) on the open unit disk that can be extended to surjective and continuous map \(\overline{\varphi_f}:\overline{D_f}\rightarrow \overline{f}\), since the boundary of \(f\) is the finite union of its incident edges in \(E\). Let \(z_f^1,\ldots, z^{N_f}_f\in\partial D_f\) be the preimages of the vertices of \(G\) on the boundary of \(f\) under the surjection \(\overline{\varphi_f}\) appearing in clockwise order and let \(v_f^1,\ldots,v_f^{N_f}\) denote the associated vertices on \(\partial f\). (Note there might be vertices of \(G\) in this list that appear multiple times.) Finally, we can extend the graph \(G\) to obtain a triangulation of \(M\). We add the vertex \(v_f=\varphi_f(0)\) and the edges \(e_f^j=\{v_f,v_f^j\}\) for \(j=1,\ldots,N_f\) that are identified with the images of the lines connecting \(0\) and \(z_f^j\). Let \(H=(V_H,E_H)\) be the graph obtained from the described construction, i.e.\
	\[V_H=V_G\cup \{v_f\}_{f\in F},\quad E_H=E_G\cup\{e_f^j\}_{f\in F}^{j=1,\ldots,N_f}.\]
By construction, the graph \(H\) is embedded in \(M\) and the boundary of each face enclosed by \(H\) consists of three edges of \(H\), where exactly one edge \(e\) belongs to the original graph \(G\). Let \(T_e^i,~i=1,2\) be the two faces that are incident to \(e\); we think of \(T_e^i\) as a triangle in \(M\). However, note that, in general, \(H\) does not define a triangulation of \(M\), as two faces may share multiple edges of \(H\) (see Figure \ref{fig:const-H}), but for our purpose the decomposition of \(M\) given by \(H\) is sufficient to define an appropriate Riemannian metric on \(M\).\\

Following the construction in \cite{Troyanov:Onthemoduli}, we can choose a flat Riemannian metric \(\Rmetric\) on \(M\setminus V_{H}\) with conical singularities in the vertices of \(V_H\) so that every triangle \(T_e^i\) is isometric to an isosceles triangle in the Euclidean plane, where the edges of \(T_e^i\) that do not correspond to \(e\) have length \(1\) and the angle enclosed by \(e\) and the two other edges is
\begin{equation}\label{eq:choice-angle}
	\alpha_e=\frac{1}{2}\arcsin\left(\frac{\pi\omega_e}{2\Deg{\omega}{\max}}\right)
\end{equation}
respectively (see Figure \ref{fig:triangle-h}). For \(v\in V_G\) let \(\theta_v\) be the total angle of the corresponding conical singularity. We have
\begin{equation}\label{eq:angle-sing}
	\theta_v=2\sum_{e\in E_v}\alpha_e=\sum_{e\in E_v} \arcsin\left(\frac{\pi\omega_e}{2\Deg{\omega}{\max}}\right)\leq \sum_{e\in E_v} \frac{\pi\omega_e}{\Deg{\omega}{\max}}=\frac{\pi \Deg{\omega}{v}}{\Deg{\omega}{\max}}\leq \pi,
\end{equation}
for all $v\in V$. Now, one can show that there exists a Riemannian metric \(\rmetric\) on \(M\) that is conformally equivalent to \(\Rmetric\) on \(M\setminus V_H\) (we refer again to \cite{Troyanov:Onthemoduli} for details). Our aim is to apply Theorem \ref{thm:acs-theorem} on \(M\) with the conformal class \(\confclass\) generated by \(\rmetric\) and the Radon measure \(\mu=\mu_\Rmetric\). It remains to define a double cover that represents the combinatorial structure of \(G\).

For every \(e\in E_G\) let \(D_e\) be the diamond
	\[D_e:=T_e^1\cup T_e^2.\]
We decompose \(D_e\) into two triangles by cutting through the diagonal of \(D_e\) perpendicular to \(e\). Every vertex \(v\in V_G\) that is incident to \(e\) is contained in exactly one of the mentioned triangles; let \(S_{e,v}\) be this triangle. Finally, we define the cone
	\[C_v:=\bigcup_{e\in E_v}S_{e,v}.\]
\begin{figure}
\begin{minipage}[t]{0.49\textwidth}
\centering
	\tikzstyle{mystyle}=[circle, draw, fill=gray,
                        inner sep=0pt, minimum width=4pt]
    \tikzstyle{mystyleR}=[draw, fill=white,
                        inner sep=0pt, minimum width=4pt, minimum height=4pt]
	\begin{tikzpicture}[thick,scale=2.5]%
	\coordinate (A) at (1,0);
	\coordinate (B) at (-1,0);
	\coordinate (C) at (0,0.9);
	
	\draw (B) edge[dashed,color=black] node[above]{\small \(1\)} (C);
	\draw (C) node[mystyleR]{} edge[dashed,color=black] node[above]{\small \(1\)} (A);
	
	\draw (B) node[mystyle]{} edge[color=black] node[below]{\small \(e\)} (A);
	\draw (A) node[mystyle]{};
	\draw pic[draw, ->, "\(\alpha_e\)", angle eccentricity=1.7, angle radius = 20] {angle = A--B--C};
	\draw pic[draw, ->, "\(\alpha_e\)", angle eccentricity=1.7, angle radius = 20] {angle = C--A--B};
	\end{tikzpicture}
	\caption{The metric \(\mathfrak h\) on a single triangle \(T_e^i\).}\label{fig:triangle-h}
	\end{minipage}
	\begin{minipage}[t]{0.49\textwidth}
	\centering
	\tikzstyle{mystyle}=[circle, draw, fill=gray,
                        inner sep=0pt, minimum width=4pt]
    \tikzstyle{mystyleR}=[draw, fill=white,
                        inner sep=0pt, minimum width=4pt, minimum height=4pt]
	\begin{tikzpicture}[thick,scale=1.2]%
	\fill [opacity=0.5,blue]
  (30:1) \foreach \x in {60,120,180,240,300}{ -- (30+\x:1) } -- cycle;
  	\fill [opacity=0.5,red]
  (0,0)--(30:1) -- (0:{sqrt(3)}) -- (-30:1) -- cycle;
  \fill [opacity=0.5,orange]
  (0,0)--(30:1) -- (-30:1) -- cycle;
  \fill [opacity=0.5,orange]
  (30:1) -- (0:{sqrt(3)}) -- (-30:1) -- cycle;
  	
    	\draw \foreach \x in {60,120,180,240,300,360} {
        (\x+30:1) edge[dashed,color=black] (\x:{sqrt(3)})
        (\x-30:1) edge[dashed,color=black] (\x:{sqrt(3)})
        (\x+30:1) edge[dashed,color=black] (0,0)
        (\x:{sqrt(3)}) node[mystyle]{} edge[color=black] (0,0)
        
	};
	\draw \foreach \x in {60,120,180,240,300,360} {(\x+30:1) node[mystyleR]{}};
	\draw (0,0) node[mystyle]{} ;
	\draw ({sqrt(3)},0.5) node[color=red!80!black]{\small \(D_e\)};
	\draw (-1.2,0.7) node[color=blue!80!black]{\small \(C_v\)};
	\end{tikzpicture}
	\caption{The covering elements \(D_e\) and \(C_v\).}\label{fig:cover-elements}
	\end{minipage}
\end{figure}

\begin{lemma}
	The family \((D_e)_{e\in E_G}\cup (C_v)_{v\in V_G}\) is a double cover of \(M\). Moreover, two covering elements are adjacent in the corresponding vicinity graph \(\Gamma\), if and only if one element is a diamond \(D_e\) and the other one is a cone \(C_v\), where \(v\) is incident to \(e\) in \(G\). In particular, the vicinity graph \(\Gamma\) is the subdivision graph of \(G\) obtained after adding one vertex on each edge of \(G\). Finally, we have
	\begin{equation}\label{eq:weight-vicinitygraph}
		\mu_{\Rmetric}(D_e\cap C_v)=\mu_\Rmetric(S_{e,v})=\frac{\pi\omega_e}{4\Deg{\omega}{\max}}
	\end{equation}
	if \(e\) is incident to \(v\).
\end{lemma}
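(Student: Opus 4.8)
The plan is to show that the diamonds $(D_e)_{e\in E_G}$ and the cones $(C_v)_{v\in V_G}$ each form a partition of $M$ up to a $\mu_\Rmetric$-null set, that these two partitions have the triangles $S_{e,v}$ as their common refinement, and then to read off both the combinatorics of $\Gamma$ and the overlap measures from this refinement. To begin, the faces of the decomposition $H$ — the triangles $T_e^i$ — tile $M$, and each such triangle lies in exactly one diamond, namely $D_e$ for its unique original edge $e$; hence the interiors of the $D_e$ are pairwise disjoint and cover $M$ outside the ($\mu_\Rmetric$-null) union of the edges of $H$. Cutting each $D_e$ along the perpendicular bisector of $e$ — which by the isosceles symmetry is precisely the segment $v_{f_1}v_{f_2}$ joining the two incident face centres — splits it into the triangles $S_{e,u}$ and $S_{e,v}$ containing the two endpoints of $e$, and each $S_{e,v}$ lies in exactly one cone $C_v$.

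The step requiring care is that each cone is open and connected. I would check that, as $e$ runs through $E_v$ in the cyclic order around $v$, consecutive pieces $S_{e,v}$ and $S_{e',v}$ share the entire new edge joining $v$ to the centre of the face lying between $e$ and $e'$; thus the $S_{e,v}$ close up into a genuine fan neighbourhood of $v$, of total angle $\sum_{e\in E_v}2\alpha_e=\theta_v$ as in \eqref{eq:angle-sing}. It follows that the cones also have pairwise disjoint interiors and cover $M$ up to a null set. Granting this, the double-cover property is immediate: a point off the union of the edges of $H$ lies in the interior of a unique $S_{e,v}$, hence in exactly one diamond and one cone and in no other covering element.

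For the adjacency statement I would compare cells directly: two diamonds, or two cones, have disjoint interiors and hence $\mu_\Rmetric$-negligible intersection, whereas $D_e\cap C_v$ agrees with $S_{e,v}$ up to a null set when $v$ is an endpoint of $e$ and is negligible otherwise. Therefore the only edges of $\Gamma$ carrying positive weight join a diamond $D_e$ to a cone $C_v$ with $v$ incident to $e$, which is exactly the incidence pattern of the subdivision graph of $G$ obtained by adding one vertex on each edge.

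Finally, since $\Rmetric$ is flat on $S_{e,v}$, the weight $\mu_\Rmetric(D_e\cap C_v)=\mu_\Rmetric(S_{e,v})$ is the Euclidean area of the isometric model. The reflection across $v_{f_1}v_{f_2}$ carries $S_{e,u}$ to $S_{e,v}$, so $\mu_\Rmetric(S_{e,v})=\tfrac12\mu_\Rmetric(D_e)$; and $D_e$ consists of two congruent isosceles triangles with two unit sides and apex angle $\pi-2\alpha_e$, giving $\mu_\Rmetric(D_e)=\sin(2\alpha_e)$. The defining relation $\alpha_e=\tfrac12\arcsin\!\bigl(\tfrac{\pi\omega_e}{2\Deg{\omega}{\max}}\bigr)$ from \eqref{eq:choice-angle} then yields $\sin(2\alpha_e)=\tfrac{\pi\omega_e}{2\Deg{\omega}{\max}}$, whence $\mu_\Rmetric(S_{e,v})=\tfrac{\pi\omega_e}{4\Deg{\omega}{\max}}$. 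The main obstacle is the topological bookkeeping of the first two paragraphs — confirming that the cones are connected neighbourhoods and that the diamond and cone partitions interlock so that generic points meet exactly one cell of each — after which the measure identity, which is precisely what the choice of $\alpha_e$ was engineered to produce, follows from a single trigonometric line.
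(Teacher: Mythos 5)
Your proposal is correct and matches the paper's argument: the paper's own proof simply asserts that the covering and adjacency statements "follow from the construction" and then computes \(\mu_\Rmetric(S_{e,v})\) as the area of an isosceles triangle with two unit legs enclosing the angle \(2\alpha_e\) at \(v\), i.e.\ \(\tfrac12\sin(2\alpha_e)=\tfrac{\pi\omega_e}{4\Deg{\omega}{\max}}\) — the same one-line trigonometric identity you reach by halving the diamond area \(\sin(2\alpha_e)\) via the reflection across \(v_{f_1}v_{f_2}\). Your expanded bookkeeping (diamonds and cones as two tilings of \(M\) up to a null set, with the \(S_{e,v}\) as common refinement — noting that the null set should also include the cut diagonals, not just the edges of \(H\)) fills in exactly the details the paper leaves implicit.
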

\begin{proof}
	The claimed statements follow from the construction of the sets \(D_e, C_v, S_{e,v}\). We point that the volume of \(S_{e,v}\) is given by \eqref{eq:weight-vicinitygraph}, since \(S_{e,v}\) is an isosceles triangle where the two edges of equal length have length \(1\) and the angle enclosed by these two edges is \(2\alpha_e=\arcsin(\frac{\pi\omega_e}{2\Deg{\omega}{\max}})\) by \eqref{eq:choice-angle}.
\end{proof}
We are ready to complete the proof of Theorem~\ref{thm:est-normlapl}. To do so, we apply Theorem \ref{thm:acs-theorem} with the double-cover \((D_e)_{e\in E_G}\cup (C_v)_{v\in V_G}\). Let \(\mathcal L_{\mathrm{norm}}^{\mu_\Rmetric}\) denote the normalized on the vicinity graph \(\Gamma\) with respect to the edge weight induced by \eqref{eq:weight-vicinitygraph}. We obtain
\begin{equation}\label{eq:apply-acs}
	\lambda_k(\mathcal L_{\mathrm{norm}}^{\mu_\Rmetric})\leq 2\frac{\lambda_k(M,\mu_\Rmetric)}{\eta}, \quad k=1,\ldots, |V_G|+|E_G|
\end{equation}
where
		\[\eta=\min\left(\min_{v\in V_G}\lambda_2(C_v,\mu_\Rmetric),\min_{e\in E_G}\lambda_2(D_e,\mu_\Rmetric)\right).\]
As \(\Gamma\) is the subdivision of \(G\) and the edge weight given by \eqref{eq:weight-vicinitygraph} is a scalar multiple of \(\omega\) (the normalized Laplacian is scale-invariant), \eqref{eq:ev-estimate-subdivision} yields
\begin{equation}\label{eq:ev-estimate-subdivision-applied}	
	\lambda_k(\mathcal L_{\mathrm{norm}}^\omega)\leq 4 \lambda_k(\mathcal L_{\mathrm{norm}}^{\mu_\Rmetric}),\quad k=1,\ldots,|V_G|.
\end{equation}
It remains to estimate the right-hand side of \eqref{eq:apply-acs}. We start by cutting \(C_v\) through one the new edges of \(H\), that we previously added to \(G\), we obtain a new open subset \(P_v\) of \(M\). Note that this procedure only decreases \(\lambda_2\). The set \(P_v\) is -- by construction of the Riemannian metric \(\Rmetric\) -- isometric to a polygonal domain in the Euclidean plane. As the angle \(\theta_v\) of the conical singularity \(v\) is less or equal than \(\pi\) (see \eqref{eq:angle-sing}), this polygonal domain is convex (see Figure \ref{fig:unfolding-cone}). Similarly, each \(D_e\) is isometric to an actual diamond in \(\R^2\) which again is convex. Morover, by choice of the metric \(\Rmetric\) all the side lengths of \(P_v\) and \(D_e\) are \(1\), so the diameter of \(P_v\) and \(D_e\) is bounded from above by \(2\). It is known that the spectral gap of the Laplacian (with Neumann boundary conditions) on convex domains of uniformly bounded diameter is uniformly bounded from below (see \cite{PayneWeinberger1960}). More precisely, we obtain
\begin{align}\label{eq:bound-eta}
	\lambda_2(C_v,\mu_\Rmetric) \geq\lambda_2(P_v,\mu_\Rmetric)& \geq \frac{\pi^2}{4}, & \lambda_2(D_e,\mu_\Rmetric) &\geq \frac{\pi^2}{4}
\end{align}
for all \(v\in V_G,~e\in E_G\). Next, we consider \(\lambda_k(M,\mu_\Rmetric)\). A theorem of Hassanezhad \cite[Cor.\ 1.2]{Hassannezhad:Conf} yields
\begin{equation}\label{eq:Hassanezhad-applied}
	\lambda_k(M,\mu_\Rmetric)\leq \tilde C\frac{g+k}{\mu_\Rmetric(M)}.
\end{equation}
for some generic constant \(\tilde C>0\); we point out that the result in \cite{Hassannezhad:Conf} was not stated in the setting of manifolds with a Radon measure, but -- as it was already observed in \cite{AminiCohenSteiner:TransferPrinc} -- the proof given in \cite{Hassannezhad:Conf} can be adapted to this setting. By choice of the metric \(\Rmetric\) we have
\begin{equation}\label{eq:volume-wrt-Rmetric}
	\mu_\Rmetric(M)=\sum_{v\in V_G}\sum_{e\in E_v}\mu_\Rmetric(S_{v,e})=\sum_{v\in V_G}\sum_{e\in E_v}\frac{\pi\omega_e}{4\Deg{\omega}{\max}}=\sum_{e\in E_G}\frac{\pi\omega_e}{2\Deg{\omega}{\max}}
\end{equation}
Plugging \eqref{eq:ev-estimate-subdivision-applied}, \eqref{eq:bound-eta}, \eqref{eq:Hassanezhad-applied} and \eqref{eq:volume-wrt-Rmetric} into \eqref{eq:apply-acs} we obtain
	\[\lambda_k(\mathcal L_{\mathrm{norm}}^\omega)\leq C\frac{\Deg{\omega}{\max}(g+k)}{\sum_{e\in E_G}\omega_e}\]
with \(C=\frac{64\tilde C}{\pi^3}\), which finally proves the claim of Theorem \ref{thm:est-normlapl}.

\begin{remark}\label{rem:on-est-genus}
\begin{remarklist}
	\item If one choose the weights given by \(m(v)=\Deg{\omega}{v}\) and \(\mu(e)=\omega_e\) in Theorem \ref{thm:maintheorem} one obtains the estimate
		\[\lambda_2(\mathcal L_{\mathrm{norm}}^\omega)\leq C\frac{\Deg{\omega}{\max}}{\sum_{e\in E_G}\omega_e}\]
	for planar \(G\). Therefore, Theorem \ref{thm:maintheorem-genus} generalizes Theorem \ref{thm:maintheorem} to higher genus and higher order eigenvalues in the normalized case.
	\item To our knowledge, no explicit upper bounds are known for the constant \(\tilde C\) appearing in Hassanezhad's bound \eqref{eq:Hassanezhad-applied}. This is why we cannot give any explicit upper bounds for the optimal constant \(C>0\) in \eqref{eq:discrete-estimate-genus} or the constants appearing in the following section.
\end{remarklist}
\end{remark}

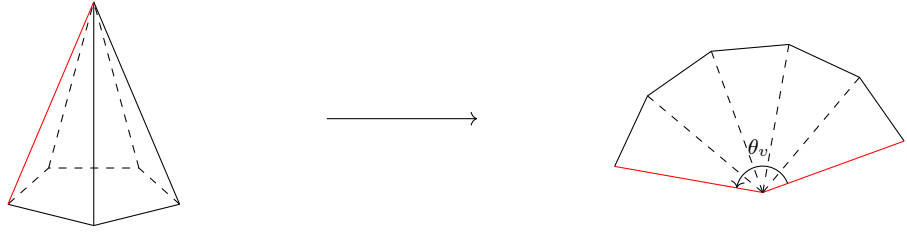
\begin{figure}
\centering
\begin{minipage}{0.33\textwidth}
\tdplotsetmaincoords{70}{0}
\flushright
\begin{tikzpicture}[scale=1.5,tdplot_main_coords]
\def\RI{0.8}
\def\RII{0}

\draw (342:\RI)
  \foreach \x in {342,270,198} { --  (\x:\RI) node at (\x:\RI) (R1-\x) {} };
\draw[dashed] (342:\RI)
  \foreach \x in {60,120,198} { --  (\x:\RI) node at (\x:\RI) (R1-\x) {} };


\foreach \x in {342,270} { \draw (\x:\RI)--(0,5); };
\draw[color=red] (198:\RI)--(0,5);
\foreach \x in {60,120} { \draw[dashed] (\x:\RI)--(0,5); };
\end{tikzpicture}
\end{minipage}\hfill
\begin{minipage}{0.33\textwidth}
	\centering
	\begin{tikzpicture}[scale=1]%
	\draw[->] (-1,0) -- (1,0);
	\end{tikzpicture}
\end{minipage}\hfill
\begin{minipage}{0.33\textwidth}
\flushleft
\begin{tikzpicture}
\draw (20:2)
\foreach \x in {50,80,110,140,170} {--   (\x:2)};

\foreach \x in {20,170} {\draw[color=red] (0,0) -- (\x:2);};
\foreach \x in {50,80,110,140} {\draw[dashed] (0,0) -- (\x:2);};
\coordinate (A) at (20:2);
\coordinate (B) at (0,0);
\coordinate (C) at (170:2);
\draw pic[draw, ->, "\tiny\(\theta_v\)", angle eccentricity=1.7, angle radius = 10] {angle = A--B--C};
\end{tikzpicture}
\end{minipage}
\caption{Cutting and unfolding of the cone \(C_v\) onto the Euclidean plane.}\label{fig:unfolding-cone}
\end{figure}
\subsection{Bounds on embedded metric graphs}
In this final subsection we prove the estimates \eqref{eq:intro-estimate-genus1} and \eqref{eq:intro-estimate-genus2} mentioned in the introduction:
\begin{theorem}\label{thm:maintheorem-genus}
	Let \(\mathcal G=(G,\ell)\) be a connected and compact metric graph of genus \(g\). There exists a generic constant \(C>0\) that does not depend on \(\mathcal G\), so that the ordered eigenvalues \(\lambda_k(\Delta_\mathcal G)\) of the Kirchhoff Laplacian on \(\mathcal G\) satisfy the estimate
	\begin{equation}
		\lambda_k(\Delta_\mathcal G)\leq C\frac{\Deg{\ell}{\max}(g+k)}{\ell_{\min}^2L}, \quad k=1,\ldots, |V|,
	\end{equation}
where \(L:=\sum_{e\in E}\ell_e\) is the total length of \(\mathcal G\), \(\ell_{\min}=\min_{e\in E}\ell_e\) is the minimal length of the edges of \(\mathcal G\), and
	\[d_{\max}^{[\ell]}=\max_{v\in V}d_v^{[\ell]}\]
is the maximal degree of the vertices of \(\mathcal G\).
\end{theorem}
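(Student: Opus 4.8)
The plan is to reduce, via a metric subdivision, to the \emph{unweighted} combinatorial estimate of Theorem~\ref{thm:est-normlapl}, whose genus-dependent right-hand side converts cleanly into the metric quantities \(\Deg{\ell}{\max}\), \(\ell_{\min}\) and \(L\). A naive attempt to combine Corollary~\ref{cor:CompareSpectralGaps} directly with Theorem~\ref{thm:est-normlapl} applied to the edge weight \(\omega=1/\ell\) fails: one is left with the factor \(\Deg{\mu}{\max}/\sum_{e}\ell_e^{-1}\), which cannot be controlled by \(\Deg{\ell}{\max}/L\) through a dimension-free constant — attaching a long equilateral path to a vertex carrying many very short edges makes the ratio blow up with \(|V|\). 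The remedy is to first make the graph nearly equilateral at the scale \(\ell_{\min}\).

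Concretely, first I would dispose of the degenerate cases (\(\mathcal G\) a single interval or loop) by direct computation, so that from then on \(G\) has a vertex of degree at least \(2\) and hence \(\Deg{\ell}{\max}\ge 2\ell_{\min}\). Then I would pass to the metric subdivision \(\mathcal G'=(G',\ell')\) obtained by splitting each edge \(e\) into \(n_e:=\max\{3,\lceil \ell_e/\ell_{\min}\rceil\}\) sub-edges of equal length. Since only degree-two vertices are inserted, \(\Delta_{\mathcal G'}\) is the same operator as \(\Delta_{\mathcal G}\), so \(\lambda_k(\Delta_{\mathcal G'})=\lambda_k(\Delta_{\mathcal G})\) for all \(k\); moreover \(G'\) is simple, connected, and still of genus \(g\) (subdivision is a homeomorphism of the topological realization). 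The point of the choice of \(n_e\) is that every sub-edge length lies in \((\ell_{\min}/3,\ell_{\min}]\), while \(|E_{G'}|=\sum_{e} n_e\ge L/\ell_{\min}\), and the combinatorial degrees of \(G'\) satisfy \(d_{\max}(G')\le\max\{2,d_{\max}(G)\}\le \Deg{\ell}{\max}/\ell_{\min}\), the last inequality because the vertex realizing \(d_{\max}(G)\) has metric degree at least \(d_{\max}(G)\,\ell_{\min}\).

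Next I would run the chain of comparisons on \(\mathcal G'\). Corollary~\ref{cor:CompareSpectralGaps} gives \(\lambda_k(\Delta_{\mathcal G'})\le \tfrac{\pi^2}{2}\lambda_k(\mathcal L_{m',\mu'})\) with \(m'(v)=\Deg{\ell'}{v}\) and \(\mu'(e)=1/\ell'_e\). Because all sub-edge lengths are comparable to \(\ell_{\min}\), this weighted Laplacian is comparable to the unweighted normalized Laplacian \(\mathcal L_{\mathrm{norm}}^{1}\) of \(G'\): from \(\ell_{\min}/3<\ell'_e\le\ell_{\min}\) one gets \(\mu'(e)\le 3\ell_{\min}^{-1}\) and \(m'(v)\ge \tfrac{\ell_{\min}}{3}\,|E_v|\), so the Rayleigh quotient of \(\mathcal L_{m',\mu'}\) is at most \(9\ell_{\min}^{-2}\) times that of \(\mathcal L_{\mathrm{norm}}^{1}\) for every test function; by the Courant--Fischer min-max principle this yields \(\lambda_k(\mathcal L_{m',\mu'})\le 9\ell_{\min}^{-2}\lambda_k(\mathcal L_{\mathrm{norm}}^{1})\) for all \(k\). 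Finally Theorem~\ref{thm:est-normlapl} applied to \(G'\) with \(\omega\equiv 1\) gives \(\lambda_k(\mathcal L_{\mathrm{norm}}^{1})\le C_0\,d_{\max}(G')(g+k)/|E_{G'}|\) for \(k=1,\dots,|V_{G'}|\), in particular for every \(k\le|V_G|\).

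Assembling these inequalities and inserting \(d_{\max}(G')\le \Deg{\ell}{\max}/\ell_{\min}\) together with \(|E_{G'}|\ge L/\ell_{\min}\) collapses the combinatorial ratio into the metric one, since \(d_{\max}(G')/|E_{G'}|\le \Deg{\ell}{\max}/L\), and the accumulated prefactor \(\tfrac{\pi^2}{2}\cdot 9\,\ell_{\min}^{-2}\cdot C_0\) produces exactly the claimed bound \(C\,\Deg{\ell}{\max}(g+k)/(\ell_{\min}^2 L)\) with a generic constant \(C\). The main obstacle is not any single estimate but the design of the subdivision, which must simultaneously (i) equilateralize the graph at scale \(\ell_{\min}\) so that the metric-to-combinatorial comparison costs only a factor \(\ell_{\min}^{-2}\) independent of \(\ell_{\max}\), (ii) keep \(|E_{G'}|\) proportional to \(L/\ell_{\min}\), and (iii) restore simplicity and preserve the genus so that Corollary~\ref{cor:CompareSpectralGaps} and Theorem~\ref{thm:est-normlapl} apply. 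The failure of the direct approach shows that ignoring (i) is fatal, which is why letting \(n_e\) track \(\ell_e/\ell_{\min}\) is the crux.
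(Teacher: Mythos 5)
Your proof is correct, but it routes the key reduction differently than the paper does. The paper never equilateralizes: after a mild subdivision ensuring simplicity, it compares \(\mathcal L_{m,\mu}\) (with \(m(v)=\Deg{\ell}{v}\), \(\mu(e)=1/\ell_e\)) directly with the \emph{\(\ell\)-weighted} normalized Laplacian \(\mathcal L_{\mathrm{norm}}^{\ell}\), using the pointwise weight inequality \(1/\ell_e\leq \ell_e/\ell_{\min}^2\) (the vertex weights \(\Deg{\ell}{v}\) coincide on both sides), so that \(\lambda_k(\mathcal L_{m,\mu})\leq \ell_{\min}^{-2}\lambda_k(\mathcal L_{\mathrm{norm}}^{\ell})\); it then invokes Theorem~\ref{thm:est-normlapl} with the non-constant weight \(\omega=\ell\), for which \(\Deg{\omega}{\max}=\Deg{\ell}{\max}\) and \(\sum_{e}\omega(e)=L\), so the metric quantities appear immediately. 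Your diagnosis that the choice \(\omega=1/\ell\) fails is fair, but the paper's remedy is the choice \(\omega=\ell\), not your equilateralization at scale \(\ell_{\min}\). Your route, by contrast, only ever uses the \emph{unweighted} case \(\omega\equiv 1\) of Theorem~\ref{thm:est-normlapl} --- essentially the original Amini--Cohen-Steiner bound --- which is a genuinely interesting observation: the weighted extension, the paper's main discrete novelty, is not strictly needed for this theorem, provided one pays with extra bookkeeping (the degenerate interval/loop cases, \(d_{\max}(G')=\max\{2,d_{\max}(G)\}\), the factor \(9\ell_{\min}^{-2}\) coming from sub-edge lengths in \([\ell_{\min}/3,\ell_{\min}]\), and a blown-up intermediate graph, all harmless since subdivision preserves the Kirchhoff spectrum, the genus and simplicity, and \(|V_{G'}|\geq |V_G|\) keeps the range of \(k\) intact). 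Your individual steps check out: \(\Deg{\ell}{\max}\geq \max\{2,d_{\max}(G)\}\,\ell_{\min}\) once the degenerate cases are excluded, \(|E_{G'}|=\sum_e n_e\geq L/\ell_{\min}\), and the Rayleigh-quotient comparison \(\lambda_k(\mathcal L_{m',\mu'})\leq 9\ell_{\min}^{-2}\lambda_k(\mathcal L_{\mathrm{norm}}^{1})\) via min--max on the common test-function space. In sum, the paper's argument buys brevity and concentrates all weight-dependence in the single choice \(\omega=\ell\); yours buys independence from the weighted machinery at the cost of a slightly worse (still generic) constant.
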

\begin{proof}
	By a subdivision argument similar to the one in the proof of Theorem \ref{thm:maintheorem} we may assume that \(G\) is simple. Let \(\mathcal L_{m,\mu}\) be the combinatorial Laplacian associated with the edge and vertex weights given by \eqref{eq:InducedWeights}. Using the Courant--Fischer Theorem one can show that
		\[\lambda_k(\mathcal L_{m,\mu})\leq \frac{1}{\ell^2_{\mathrm{min}}}\lambda_k(\mathcal L_{\mathrm{norm}}^\ell), \quad k=1,\ldots,|V|.\]
	Thus, Corollary \ref{cor:CompareSpectralGaps} and  Theorem \ref{thm:est-normlapl} yield that
		\[\lambda_k(\Delta_\mathcal G)\leq \frac{\pi^2}{2\ell_{\mathrm{min}}^2}\lambda_k(\mathcal L_{\mathrm{norm}}^\ell)\leq C\frac{d_{\max}^\ell(g+k)}{\ell_{\min}^2L}, \quad k=1,\ldots,|V|.\]
	This proves the claim.
\end{proof}
	\begin{theorem}\label{theo:est-with-dmax}
		Let \(\mathcal G=(G,\ell)\) be a connected and compact metric graph of genus \(g\) and let \(\beta=|E_G|-|V_G|+1\) be its first Betti number. There exists a generic constant \(C>0\), so that
			\begin{equation}\label{eq:est-bettinumber-genus}
				\lambda_k(\Delta_\mathcal G)\leq C\frac{d_{\max}(\beta+k-1)(g+k)}{L^2}
			\end{equation}
		holds for all integers \(k\in\mathbb N\) with \(k\geq \frac{L}{\ell_{\min}}-\beta+1\).
	\end{theorem}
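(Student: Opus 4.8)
The plan is to reduce the estimate to Theorem~\ref{thm:maintheorem-genus} by passing to a suitable \emph{metric} subdivision of $\mathcal G$ all of whose edges have length comparable to $L/(\beta+k-1)$. Recall that inserting vertices of degree two leaves the metric space $\mathcal G$, and hence the whole spectrum of $\Delta_\mathcal G$, unchanged, while preserving the genus $g$ as well as the first Betti number $\beta$. The decisive observation is that such a subdivision may have up to $\beta+k-1$ edges and therefore at least $k$ vertices, so that Theorem~\ref{thm:maintheorem-genus}---whose range of validity is only $k\le |V|$---becomes applicable at the index $k$ even when $k$ exceeds $|V_G|$.

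Concretely, set $N:=\beta+k-1$; the hypothesis $k\ge \frac{L}{\ell_{\min}}-\beta+1$ is precisely the statement $N\ge L/\ell_{\min}$, i.e.\ $L/N\le \ell_{\min}$. For each edge $e$ I would subdivide $e$ into
\[
	n_e:=\left\lceil \frac{\ell_e N}{L}\right\rceil
\]
pieces of equal length $\ell_e/n_e$, obtaining a metric graph $\mathcal G'=(G',\ell')$ of the same genus $g$ with $\lambda_j(\Delta_{\mathcal G'})=\lambda_j(\Delta_{\mathcal G})$ for every $j$. Since $\ell_e\ge \ell_{\min}\ge L/N$ one has $\ell_e N/L\ge 1$, so each $n_e\ge 1$ and the construction is legitimate; moreover $\sum_e n_e\ge \sum_e \ell_e N/L=N$, whence $|V_{G'}|=\sum_e n_e+1-\beta\ge k$ and Theorem~\ref{thm:maintheorem-genus} applies to $\mathcal G'$ at the index $k$.

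The two metric quantities entering Theorem~\ref{thm:maintheorem-genus} are then controlled from both sides by $L/N$. From $n_e\le \ell_e N/L+1$ together with $L/\ell_e\le L/\ell_{\min}\le N$ one gets $\ell_e/n_e\ge \tfrac{L}{N+L/\ell_e}\ge L/(2N)$, so that $\ell'_{\min}\ge L/(2N)$; on the other hand $n_e\ge \ell_e N/L$ gives $\ell_e/n_e\le L/N$, so every vertex of $\mathcal G'$---the old ones having combinatorial degree at most $d_{\max}$ and the new ones degree $2$---satisfies $\Deg{\ell'}{v}\le \max(d_{\max},2)\,L/N\le 2d_{\max}L/N$, i.e.\ $\Deg{\ell'}{\max}\le 2d_{\max}L/N$. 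As $L$, $g$ and $k$ are unchanged, plugging these bounds into Theorem~\ref{thm:maintheorem-genus} yields
\[
	\lambda_k(\Delta_\mathcal G)=\lambda_k(\Delta_{\mathcal G'})\le C\,\frac{\Deg{\ell'}{\max}(g+k)}{(\ell'_{\min})^2 L}\le 8C\,\frac{d_{\max}N(g+k)}{L^2}=8C\,\frac{d_{\max}(\beta+k-1)(g+k)}{L^2},
\]
which is the asserted estimate with a new generic constant.

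The only delicate point is the rounding in the choice of $n_e$: one must check \emph{simultaneously} that $\sum_e n_e\ge N$ (to guarantee enough vertices to reach the index $k$) and that $\ell_e/n_e$ stays within a fixed factor of $L/N$ from both sides (to keep $\ell'_{\min}$ large and $\Deg{\ell'}{\max}$ small). Both are consequences of the two-sided estimate $\ell_e N/L\le n_e\le \ell_e N/L+1$, and it is exactly the hypothesis $N\ge L/\ell_{\min}$ that renders the additive $+1$ harmless, by forcing $L/\ell_e\le N$. Without this hypothesis the shortest edge could fail to be subdivided at all, so that $\ell'_{\min}$ would collapse back to $\ell_{\min}$ and the argument would break down---this is the structural reason behind the lower restriction on $k$.
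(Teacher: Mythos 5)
Your proposal is correct and takes essentially the same route as the paper's proof: subdivide each edge into roughly $\ell_e(\beta+k-1)/L$ equal pieces (the paper chooses the unique integer $m_e$ with $m_e-1\leq n\ell_e/L<m_e$, forcing $m_e\geq 2$, where your ceiling permits $n_e=1$), verify the two-sided comparison $L/(2n)\leq \ell_e/m_e\leq L/n$ and that the subdivision has at least $k$ vertices, then apply Theorem~\ref{thm:maintheorem-genus} at index $k$. The only deviations are cosmetic: your rounding can leave an edge unsubdivided, which is harmless since Theorem~\ref{thm:maintheorem-genus} does not require simplicity, and your bookkeeping yields the constant $8C$ in place of the paper's $4C$, which is irrelevant for a generic constant.
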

	\begin{proof}
		For given \(k\in\N\) with \(k\geq \frac{L}{\ell_{\min}}-\beta+1\), we set \(n:=k+\beta-1=k+|E_G|-|V_G|\). Then, for \(e\in E\) let \(m_e\) be the unique integer with
			\[m_e-1\leq\frac{n\ell_e}{L}< m_e\]
		and let \(m:=\sum_{e\in E}m_e\). Let \(\tilde{\mathcal G}=(\tilde G,\tilde \ell)\) be the metric subdivision graph obtained after dividing each edge \(e\) into \(m_e\) edges of the same length \(\frac{\ell_e}{m_e}\). Note that, by the condition on \(k\) and the choice of \(n\), we have \(\frac{n\ell_e}{L}\geq 1\). Therefore \(m_e\) has to be at least \(2\). Then, by choice of \(m_e\), we have \(m-|E|\leq n< m\) and
		\begin{equation*}
				\frac{m_e-1}{m_e}\cdot\frac{L}{n}\leq\frac{\ell_e}{m_e}< \frac{L}{n},
		\end{equation*}
		The latter implies
		\begin{equation}\label{eq:estimates-length-subdivision}
			\frac{L}{2n}\leq\frac{\ell_e}{m_e}< \frac{L}{n}
		\end{equation}
		since \(m_e\geq 2\) for \(e\in E\). By construction, the underlying combinatorial graph \(\tilde G\) of \(\tilde{\mathcal G}\) has
			\[|V_{\tilde G}|=|V_G|+m-|E_G|> |V_G|+n-|E_G|=k\]
		vertices. Thus, we obtain
			\[\lambda_k(\Delta_\mathcal G)=\lambda_k(\Delta_{\tilde{\mathcal G}})\leq C\frac{\Deg{\tilde\ell}{\max}(g+k)}{\tilde\ell_{\min}^2 L}\]
		by Theorem \ref{thm:maintheorem-genus}, where \(\tilde\ell_{\min}\) is the minimal length and \(\Deg{\tilde\ell}{\max}\) is the maximal degree of \(\tilde{\mathcal G}\). Using \eqref{eq:estimates-length-subdivision} we obtain \(\ell_{\min}\geq \frac{L}{2n}\) and \(\Deg{\tilde\ell}{\max}\leq d_{\max}\frac{L}{n}\), which again yields
			\[\lambda_k(\Delta_\mathcal G)\leq 4C\frac{d_{\max}n(g+k)}{L^2}=4C\frac{d_{\max}(\beta+k-1)(g+k)}{L^2}.\]
		As \(k\) was arbitrary, this completes the proof.
	\end{proof}

\appendix

\bibliographystyle{alpha}
\bibliography{bib}

\end{document}